\newtheorem{theorem}{Theorem}[section]
\newtheorem{corollary}[theorem]{Corollary}
\newtheorem{definition}[theorem]{Definition}
\newtheorem{lemma}[theorem]{Lemma}
\newtheorem{proposition}[theorem]{Proposition}
\newtheorem{remark}[theorem]{Remark}
\newenvironment{proof}[1][Proof]{\noindent\textbf{#1.} }{\ \rule{0.5em}{0.5em}}
\newcommand{\R}{\mathbb{R}}
\newcommand{\Rd}{\mathbb{R}^{d}}
\newcommand{\C}{\mathbb{C}}
\newcommand{\E}{\mathbb{E}}
\newcommand{\p}{\mathbb{P}}
\newcommand{\N}{\mathcal{N}}
\newcommand{\Nd}{\mathcal{N}_{d}}
\newcommand{\h}{\mathfrak{H}}
\newcommand{\smd}{\sum\limits_{i=1}^{d}}
\newcommand{\pxi}{\cfrac{\partial}{\partial x_i}}
\newcommand{\pxj}{\cfrac{\partial}{\partial x_j}}
\newcommand{\pxk}{\cfrac{\partial}{\partial x_k}}
\newcommand{\pxij}{\cfrac{\partial^2}{\partial x_i \partial x_j}}
\newcommand{\law}{\overset{(law)}{=}}
\newcommand{\Hess}{{\rm Hess }\,}
\newcommand{\1}{\mathbf{1}}
\title{ \bf Multi-dimensional Gaussian fluctuations\\ on the Poisson space}
\author{Giovanni Peccati\footnote{Facult\'{e} des Sciences, de la Technologie
et de la Communication; UR en Math\'{e}matiques. 6, rue Richard
Coudenhove-Kalergi, L-1359 Luxembourg. Email: \texttt{giovanni.peccati@gmail.com}} \ \ and \ Cengbo Zheng\footnote{Equipe Modal'X, Universit\'{e} Paris Ouest -- Nanterre la D\'{e}fense, 200 Avenue de la R\'{e}publique, 92000 Nanterre, and LPMA, Universit\'{e} Paris VI, Paris, France. Email: \texttt{zhengcb@gmail.com }}}
\begin{document}
\maketitle

\begin{abstract}
We study multi-dimensional normal approximations on the Poisson space by means of Malliavin calculus, Stein's method and probabilistic interpolations. Our results yield new multi-dimensional central limit theorems for multiple integrals with respect to Poisson measures -- thus significantly extending previous works by Peccati, Sol\'{e}, Taqqu and Utzet. Several explicit examples (including in particular vectors of linear and non-linear functionals of Ornstein-Uhlenbeck L\'{e}vy processes) are discussed in detail.\\

\noindent{\bf Keywords:} Central Limit Theorems; Malliavin calculus; Multi-dimensional normal approximations; Ornstein-Uhlenbeck processes; Poisson measures; Probabilistic Interpolations; Stein's method. \\

\noindent{\bf 2010 Mathematics Subject Classification:} 60F05; 60G51; 60G57; 60H05; 60H07.
\end{abstract}
\tableofcontents
\begin{section}{Introduction}
Let $(Z,\mathcal{Z},\mu) $ be a measure space such that
$Z$ is a Borel space and $\mu$ is a $\sigma$-finite non-atomic
Borel measure. We set $\mathcal{Z}_{\mu} = \{ B\in \mathcal{Z}: \mu(B)< \infty \}$. In what follows, we write
$\hat{N} = \{\hat{N}(B) : B\in \mathcal{Z}_{\mu} \} $ to indicate a {\sl compensated Poisson measure} on $(Z,\mathcal{Z}) $ with {\sl control} $\mu$. In other words, $\hat{N} $ is a collection of random variables defined on some probability space $(\Omega, \mathcal{F}, \p) $, indexed by
the elements of $\mathcal{Z}_{\mu} $ and such that: (i) for every $B,C \in \mathcal{Z}_{\mu}$ such that $B \cap C = \varnothing$, the random variables $ \hat{N}(B)$ and $ \hat{N}(C)$ are independent;  (ii) for every $B \in \mathcal{Z}_{\mu} $, $\hat{N}(B) \law N(B)-\mu(B) $, where $N(B) $ is a Poisson random variable with paremeter $\mu(B) $. A random measure verifying property (i) is customarily called ``completely random'' or, equivalently, ``independently scattered'' (see e.g. \cite{surg}). \\

Now fix $d\geq 2$, let $F=(F_1,\ldots,F_d) \subset L^2(\sigma(\hat{N}),\p)$ be a vector of square-integrable functionals of $\hat{N}$, and let $X=(X_1,\ldots,X_d)$ be a centered Gaussian vector. The aim of this paper is to develop several techniques, allowing to assess quantities of the type
\begin{equation}\label{dist}
d_{\mathcal{H}} (F,X) = \sup\limits_{g\in \mathcal{H}} |\E[g(F)]-\E[g(X)] |,
\end{equation}
where $\mathcal{H}$ is a suitable class of real-valued test functions on $\Rd$.  As discussed below, our principal aim is the derivation of explicit upper bounds in multi-dimensional Central limit theorems (CLTs) involving vectors of general functionals of $\hat{N}$. Our techniques rely on a powerful combination of Malliavin calculus (in a form close to Nualart and Vives \cite{nuaviv}), Stein's method for multivariate normal approximations (see e.g. \cite{chameck, npr, rr} and the references therein), as well as some interpolation techniques reminiscent of Talagrand's ``smart path method'' (see \cite{talag}, and also \cite{cha2, npr2}). As such, our findings can be seen as substantial extensions of the results and techniques developed e.g. in \cite{np, npr, pstu}, where Stein's method for normal approximation is successfully combined with infinite-dimensional stochastic analytic procedures (in particular, with infinite-dimensional integration by parts formulae).\\

The main findings of the present paper are the following:\\

{\bf \noindent(I)} We shall use both Stein's method and interpolation procedures in order to obtain explicit upper bounds for distances such as (\ref{dist}). Our bounds will involve Malliavin derivatives and infinite-dimensional Ornstein-Uhlenbeck operators. A careful use of interpolation techniques also allows to consider Gaussian vectors with a non-positive definite covariance matrix. As seen below, our estimates are the exact Poisson counterpart of the bounds deduced in a Gaussian framework in Nourdin, Peccati and R\'{e}veillac \cite{npr} and Nourdin, Peccati and Reinert \cite{npr2}.  \\

{\bf \noindent(II)} The results at point {\bf (I)} are applied in order to derive explicit sufficient conditions for multivariate CLTs involving vectors
of multiple Wiener-It\^o integrals with respect to $\hat{N}$. These results extend to arbitrary orders of integration and arbitrary dimensions the CLTs deduced by Peccati and Taqqu \cite{pt} in the case of single and double Poisson integrals (note that the techniques developed in \cite{pt} are based on decoupling). Moreover, our findings partially generalize to a Poisson framework the main result by Peccati and Tudor \cite{ptudor}, where it is proved that, on a Gaussian Wiener chaos (and under adequate conditions), componentwise convergence to a Gaussian vector is always equivalent to joint convergence. (See also \cite{npr}.) As demonstrated in Section 6, this property is particularly useful for applications.\\

The rest of the paper is organized as follows. In Section 2 we discuss some preliminaries, including basic notions of stochastic analysis on the Poisson space and Stein's method for multi-dimensional normal approximations. In Section 3, we use Malliavin-Stein techniques to deduce explicit upper bounds for the Gaussian approximation of a vector of functionals of a Poisson measure. In Section 4, we use an interpolation method (close to the one developed in \cite{npr2}) to deduce some variants of the inequalities of Section 3. Section 5 is devoted to CLTs for vectors of multiple Wiener-It\^{o} integrals. Section 6 focuses on examples, involving in particular functionals of Ornstein-Uhlenbeck L\'{e}vy processes. An Appendix (Section 7) provides the precise definitions and main properties of the Malliavin operators that are used throughout the paper.

\end{section}

\begin{section}{Preliminaries}
\begin{subsection}{Poisson measures}
As in the previous section, $(Z,\mathcal{Z},\mu) $ is a Borel measure space, and $\hat{N}$ is a Poisson measure on $Z$ with control $\mu$.

\begin{remark} \label{rmk1}
\rm{
Due to the assumptions on the space $(Z,\mathcal{Z},\mu)$,
we can always set $(\Omega,\mathcal{F},\mathbb{P})$ and $\hat{N}$ to be such that
$$ \Omega = \left\{ \omega = \sum_{j=0}^{n} \delta_{z_j},n\in   \mathbb{N} \cup \{\infty\},z_j\in Z    \right\} $$
where $\delta_z$ denotes the Dirac mass at $z$, and $\hat{N}$ is the
\textbf{compensated canonical mapping}
$$\omega\mapsto \hat{N}(B)(\omega) = \omega(B) -\mu(B),\quad B\in \mathcal{Z}_{\mu},\quad \omega\in\Omega,$$
(see e.g. \cite{pic} for more details). For the rest of the paper, we assume that $\Omega$ and $\hat{N}$ have this form. Moreover, the $\sigma$-field $\mathcal{F}$ is supposed to be the $\p$-completion of the $\sigma$-field generated by $\hat{N}$. }
\end{remark}


Throughout the paper, the symbol $L^2(\mu)$ is shorthand for $L^2(Z,\mathcal{Z},\mu)$.
  For $n\geq 2$, we write $L^2(\mu^n)$ and $L^2_s(\mu^n)$, respectively, to indicate the space of real-valued functions on $Z^n$ which are square-integrable with respect to the product measure $\mu^n$, and the subspace of $L^2(\mu^n)$ composed of symmetric functions. Also, we adopt the convention $L^2(\mu) = L_s^2(\mu) =L^2(\mu^1) =L_s^2(\mu^1) $ and use the following standard notation: for every $n\geq 1$ and every $f,g\in L^2(\mu^n)$,
  $$ \langle f,g \rangle_{L^2(\mu^n)} = \int_{Z^n} f(z_1,...,z_n)g(z_1,...,z_n)\mu^n (dz_1,...,dz_n), \quad \|f\|_{L^2(\mu^n)} = \langle f,f \rangle^{1/2}_{L^2(\mu^n)} . $$

%


For every $f\in L^2(\mu^n)$, we denote by $\widetilde{f}$ the canonical symmetrization of $f$, that is,
      $$\widetilde{f}(x_1,\ldots,x_{n})=\cfrac{1}{n!}
\sum_\sigma f (x_{\sigma(1)},\ldots,x_{\sigma(n)}) $$
where $\sigma $ runs over the $n! $ permutations of the
set $\{1,\ldots,n \}$. Note that, e.g. by Jensen's inequality,
\begin{eqnarray} \label{ineq_sym}
\|\tilde{f}\|_{L^2(\mu^n)} \leq  \|f\|_{L^2(\mu^n)}
\end{eqnarray}
For every $f\in L^2_s(\mu^n) $, $n\geq 1$, and every fixed $z\in Z$, we write $f(z,\cdot)$ to indicate the function defined on $Z^{n-1}$ given by $(z_1,\ldots,z_{n-1}) \mapsto f(z,z_1,\ldots,z_{n-1})$. Accordingly, $\widetilde{f(z,\cdot)}$ stands for the symmetrization of the function $f(z,\cdot)$ (in $(n-1)$ variables). Note that, if $n=1$, then $f(z,\cdot)=f(z)$ is a constant.

\begin{definition}
For every deterministic function $h\in L^2(\mu)$, we write
$I_1(h)=\hat{N}(h) = \int_Z h(z) \hat{N}(dz) $ to indicate the {\bf Wiener-Itô
integral} of $h$ with respect to $\hat{N}$. For every $n\geq 2$ and every $f\in L^2(\mu^n)$, we denote by $I_n(f)$
the {\bf multiple Wiener-Itô integral}, of order $n$, of $f$ with respect to $\hat{N}$. We also set $I_n(f)=I_n(\tilde{f})$, for every $f\in L^2(\mu^n)$, and $I_0(C)=C$ for every constant $C$.
\end{definition}

The reader is referred e.g. to Privault \cite{priv} for a complete discussion of multiple Wiener-Itô integrals and their properties (including the forthcoming Proposition \ref{P : MWIone} and Proposition \ref{P: MWIchaos}) -- see also \cite{nuaviv, surg}.

\begin{proposition}\label{P : MWIone}
The following properties hold for every $n,m\geq 1$, every $f\in L_s^2(\mu^n)$  and every $g\in L_s^2(\mu^m)$:
\begin{enumerate}
  \item $\E[I_n(f)]=0$,
  \item $\E[I_n(f) I_m(g)]= n!\langle f,g  \rangle_{L^2(\mu^n)} \1_{(n=m)} $
  (isometric property).
\end{enumerate}
\end{proposition}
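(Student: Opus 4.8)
The plan is to establish both identities first on a dense class of \emph{off-diagonal} simple functions, and then to bootstrap to all of $L^2_s(\mu^n)$ using the very isometry one is proving. Because of the convention $I_n(f)=I_n(\tilde f)$, it suffices to treat symmetric integrands, and these are approximated by symmetrizations of indicators $\1_{A_1\times\cdots\times A_n}$ with $A_1,\ldots,A_n\in\mathcal{Z}_\mu$ pairwise disjoint; for such an indicator the integral is, by definition, the product $\hat{N}(A_1)\cdots\hat{N}(A_n)$. The only two analytic facts I need, both immediate from the definition of $\hat{N}$ in the Introduction, are that $\E[\hat{N}(A)]=0$ and $\E[\hat{N}(A)^2]=\mathrm{Var}(N(A))=\mu(A)$ for every $A\in\mathcal{Z}_\mu$, together with the independence of $\hat{N}(A)$ and $\hat{N}(A')$ whenever $A\cap A'=\varnothing$.

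First I would dispatch the mean-zero property: for an off-diagonal indicator the factors $\hat{N}(A_i)$ are independent, so $\E[\hat{N}(A_1)\cdots\hat{N}(A_n)]=\prod_i\E[\hat{N}(A_i)]=0$, and this extends to all off-diagonal simple functions by linearity. The heart of the argument is then the computation of $\E[I_n(f)I_m(g)]$ for two such functions supported on $A_1\times\cdots\times A_n$ and $B_1\times\cdots\times B_m$. After passing to a common refinement of $\{A_i\}$ and $\{B_j\}$ into finitely many pairwise disjoint atoms $C_k$, I expand the product $I_n(f)I_m(g)$ into a finite sum of terms $\prod_k\hat{N}(C_k)^{p_k}$. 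The decisive point, and the reason the Poisson higher-order cumulants never enter, is that since each factor is itself supported off the diagonal, every atom $C_k$ receives at most one index from the $f$-block and at most one from the $g$-block, so $p_k\leq 2$ always. By independence across disjoint atoms, such an expectation vanishes as soon as some atom carries exponent $p_k=1$ (a mean-zero factor); the surviving terms are exactly those in which every $A_i$ is matched to some $B_j$ with $A_i=B_j$, each contributing $\E[\hat{N}(A_i)^2]=\mu(A_i)$. This forces $n=m$ and a bijection between the two families, and counting the $n!$ matchings created by the symmetrizations (equivalently, the $n!$ pairwise disjoint permuted copies appearing in $\tilde f$) produces precisely $n!\,\langle f,g\rangle_{L^2(\mu^n)}$; when $n\neq m$ no complete matching exists and the expectation is $0$.

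Finally I would invoke density and continuity. Since $\mu$ is non-atomic, the diagonals $\{z_i=z_j,\ i\neq j\}$ are $\mu^n$-null, so the off-diagonal simple functions are dense in $L^2_s(\mu^n)$. The identity just proved shows that $f\mapsto I_n(f)$ is, up to the constant $n!$, an isometry from this dense class into $L^2(\p)$, hence extends continuously to all of $L^2_s(\mu^n)$, and both properties survive the $L^2$ limit because $\E[\,\cdot\,]$ and the bilinear form $(X,Y)\mapsto\E[XY]$ are continuous on $L^2(\p)$. The main obstacle is the bookkeeping inside the moment computation: verifying rigorously that off-diagonality caps every exponent at $2$ and correctly enumerating the $n!$ surviving matchings. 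Once that combinatorial step is secured, the remainder is routine density and continuity.
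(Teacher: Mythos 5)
The paper offers no proof of this proposition: it simply refers the reader to Privault \cite{priv} (see also \cite{nuaviv, surg}), so there is nothing internal to compare against. Your argument is essentially the standard construction-based proof from that literature, and it is correct: define $I_n$ on off-diagonal simple functions as products $\hat N(A_1)\cdots\hat N(A_n)$, use complete randomness plus $\E[\hat N(A)]=0$, $\E[\hat N(A)^2]=\mu(A)$ to compute the first two moments there, and extend by the isometry itself, the off-diagonal simple functions being dense because $\mu$ is non-atomic. There is no circularity in the bootstrap, since the isometry is established on the dense class before being used to extend. The decisive observation -- that off-diagonality of each block caps every exponent $p_k$ at $2$, so no Poisson cumulant beyond the variance ever appears -- is exactly the right one. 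Two small points of care: the surviving terms should be described as matchings of the refinement atoms $C_k$ (contributing $\sum_{C\subset A_i\cap B_j}\mu(C)=\mu(A_i\cap B_j)$), not as requiring $A_i=B_j$ literally, unless you first rewrite both $f$ and $g$ over the common refinement, which is the cleaner way to organize the bookkeeping; and the count of $n!$ matchings should be checked against the normalization $\tilde f=\frac{1}{n!}\sum_\sigma f\circ\sigma$, which it does -- one finds $\E[I_n(f)I_n(g)]=\sum_\pi\prod_i\mu(A_i\cap B_{\pi(i)})=n!\langle \tilde f,\tilde g\rangle_{L^2(\mu^n)}$, as required.
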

The Hilbert space composed of the random variables with the form $I_n(f)$, where $n\geq 1$ and $f\in L^2_s(\mu^n)$, is called the $n$th \emph{Wiener chaos} associated with the Poisson measure $\hat{N}$. The following well-known \emph{chaotic representation property} is essential in this paper.

\begin{proposition}
[Chaotic decomposition] \label{P: MWIchaos} Every random variable $F\in L^2(\mathcal{F},\p)=L^2(\p)$
admits a (unique) chaotic decomposition of the type
\begin{equation}  \label{chao}
F= \E[F] + \sum_{n \geq 1}^{\infty} I_n(f_n)
\end{equation}
where the series converges in $L^2(\p)$ and, for each $n\geq 1$, the kernel $f_n$ is an element
of $L^2_s(\mu^n)$.
\end{proposition}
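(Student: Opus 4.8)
I would prove the proposition by separating the (easy) uniqueness from the (substantive) existence assertion, the latter being an $L^2$-completeness statement. Uniqueness follows at once from Proposition \ref{P : MWIone}: taking expectations in (\ref{chao}) and using $\E[I_n(f_n)]=0$ identifies the constant term as $\E[F]$, while the isometric property shows that chaoses of different orders are orthogonal and that, within a fixed order $m$, the map $f_m\mapsto I_m(f_m)$ is injective on $L^2_s(\mu^m)$. Indeed, if $\sum_{n\geq 1}I_n(f_n-g_n)=0$ in $L^2(\p)$, then pairing with an arbitrary $I_m(k)$ gives $m!\,\langle f_m-g_m,k\rangle_{L^2(\mu^m)}=0$ for all $k\in L^2_s(\mu^m)$, whence $f_m=g_m$.

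For existence I would argue by orthogonal projection. Let $\mathcal{S}\subset L^2(\p)$ be the closed linear span of the constants together with all $I_n(f_n)$, $n\geq 1$, $f_n\in L^2_s(\mu^n)$. By the isometric property these subspaces are pairwise orthogonal, so $F\in\mathcal{S}$ is \emph{equivalent} to the existence of an expansion of the form (\ref{chao}) converging in $L^2(\p)$. Hence it suffices to prove $\mathcal{S}=L^2(\p)$, i.e. $\mathcal{S}^{\perp}=\{0\}$.

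The crux is to display a dense family sitting inside $\mathcal{S}$. For bounded $h$ with $\mu(\{h\neq 0\})<\infty$ and $h>-1$, I would consider the exponential $\mathcal{E}(h)=\exp\!\big(\int_Z \log(1+h)\,dN-\int_Z h\,d\mu\big)$ and claim the explicit expansion $\mathcal{E}(h)=\sum_{n\geq 0}\frac{1}{n!}I_n(h^{\otimes n})$. The series converges in $L^2(\p)$: by the isometric property $\big\|\frac{1}{n!}I_n(h^{\otimes n})\big\|^2_{L^2(\p)}=\frac{1}{n!}\|h\|_{L^2(\mu)}^{2n}$, and $\sum_{n\geq 0}\frac{1}{n!}\|h\|_{L^2(\mu)}^{2n}=\exp(\|h\|^2_{L^2(\mu)})<\infty$, so in particular $\mathcal{E}(h)\in\mathcal{S}$. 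Proving the identity itself is the main obstacle. The plan is to treat first the elementary kernels $h=c\1_B$, for which $I_n\big((c\1_B)^{\otimes n}\big)$ is a Charlier polynomial in $N(B)$ and the claim reduces to the classical generating-function formula for Charlier polynomials; then to pass to simple $h=\sum_{j}c_j\1_{B_j}$ with disjoint $B_j$, using that the $I_n(h^{\otimes n})$ factorize over the independent blocks $\hat N(B_j)$ so that $\sum_n\frac{1}{n!}I_n(h^{\otimes n})$ becomes a product of single-block exponentials; and finally to extend to general $h$ in the class by $L^2(\p)$-continuity of both sides.

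It remains to show these exponentials are total. Specializing to $h=\sum_{j=1}^{k}(e^{t_j}-1)\1_{B_j}$ with disjoint $B_j\in\mathcal{Z}_\mu$ and real $t_j$, the exponential $\mathcal{E}(h)$ reduces to a deterministic multiple of $\exp\!\big(\sum_{j=1}^{k}t_j N(B_j)\big)$. Thus any $G\in\mathcal{S}^{\perp}$ satisfies $\E\big[G\,\exp(\sum_j t_jN(B_j))\big]=0$ for all $t_j\in\R$ (the integrand is in $L^1$ by Cauchy--Schwarz, since Poisson variables have all exponential moments). Writing $w_j=e^{t_j}>0$ and using that the $N(B_j)$ are integer-valued, the vanishing of the resulting power series in the $w_j$ forces $\E[G\mid N(B_1),\dots,N(B_k)]=0$. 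Since the sets range over all finite disjoint families in $\mathcal{Z}_\mu$, and $\{N(B):B\in\mathcal{Z}_\mu\}$ generates $\mathcal{F}$ up to $\p$-null sets (Remark \ref{rmk1}), a monotone-class and martingale-convergence argument yields $G=\E[G\mid\mathcal{F}]=0$, so $\mathcal{S}^{\perp}=\{0\}$ and the proof is complete.
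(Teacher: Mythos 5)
The paper does not actually prove Proposition \ref{P: MWIchaos}: it is stated as a known result and delegated to the references (Privault, Nualart--Vives, Surgailis), so there is no internal proof to compare against. Judged on its own, your argument is the classical proof of the chaotic representation property and it is essentially correct. The uniqueness part via Proposition \ref{P : MWIone} is complete. For existence, the reduction to totality of the exponential vectors $\mathcal{E}(h)=\sum_{n\geq 0}\frac{1}{n!}I_n(h^{\otimes n})$, the $L^2$-convergence estimate $\sum_n \frac{1}{n!}\|h\|_{L^2(\mu)}^{2n}<\infty$, the identification of $\mathcal{E}(h)$ with a constant multiple of $\exp\big(\sum_j t_j N(B_j)\big)$ for $h=\sum_j(e^{t_j}-1)\1_{B_j}$, the power-series argument giving $\E[G\,\1_{\{N(B_1)=n_1,\ldots,N(B_k)=n_k\}}]=0$ for every multi-index, and the final monotone-class/martingale step using that $\mathcal{F}$ is the $\p$-completion of $\sigma(\hat N)$ (Remark \ref{rmk1}) are all sound. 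The only step you leave as an outline rather than a proof is the identity $\mathcal{E}(h)=\sum_n\frac{1}{n!}I_n(h^{\otimes n})$; your proposed route (Charlier generating function for a single block, factorization of $I_n$ over disjoint blocks by independence, then $L^2$-continuity) is the standard and correct one, and it is worth observing that for the totality argument you only ever need $h$ of the simple disjoint-block form, so the continuity/extension step at the end of that paragraph is not actually required for the proposition. In short: a correct, self-contained route to a statement the paper imports from the literature.
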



\end{subsection}

\begin{subsection}{Malliavin operators}
For the rest of the paper, we shall use definitions and results related to Malliavin-type operators defined on the space of functionals of the Poisson measure $\hat{N}$. Our formalism is analogous to the one introduced by Nualart and Vives \cite{nuaviv}. In particular, we shall denote by $D$, $\delta$, $L$ and $L^{-1}$, respectively, the {\sl Malliavin derivative}, the {\sl divergence operator}, the {\sl Ornstein-Uhlenbeck generator} and its {\sl pseudo-inverse}. The domains of $D$, $\delta$ and $L$ are written ${\rm dom} D$, ${\rm dom} \delta$ and ${\rm dom} L$. The domain of $L^{-1}$ is given by the subclass of $L^2(\mathbb{P})$ composed of centered random variables, denoted by $L_0^2(\mathbb{P})$.

Albeit these objects are fairly standard, for the convenience of the reader we have collected some crucial definitions and results in the Appendix (see Section \ref{APPENDIX}). Here, we just recall that, since the underlying probability space $\Omega$ is assumed to be the collection of discrete measures described in Remark \ref{rmk1}, then one can meaningfully define the random variable $\omega\mapsto F_z (\omega) =F(\omega + \delta_z),\, \omega \in \Omega, $  for every given random variable $F$ and every $z\in Z$, where $\delta_z$ is the Dirac mass at $z$. One can therefore prove that the following neat representation of $D$ as a {\sl difference operator} is in order.
\begin{lemma}\label{L : diff}
For each $F\in {\rm dom} D$,
$$ D_z F = F_z - F ,\,\,  \text{a.e.-} \mu(dz). $$
\end{lemma}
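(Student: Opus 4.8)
The plan is to work from the spectral definition of $D$ furnished by the chaotic decomposition (Proposition \ref{P: MWIchaos}). Writing $F=\E[F]+\sum_{n\geq1}I_n(f_n)$ with $f_n\in L^2_s(\mu^n)$, the derivative acts on ${\rm dom}\,D$ by $D_zF=\sum_{n\geq1}n\,I_{n-1}(f_n(z,\cdot))$ (this is the chaos-by-chaos definition recalled in the Appendix), and $F\in{\rm dom}\,D$ exactly when $\sum_{n\geq1}n\,n!\,\|f_n\|_{L^2(\mu^n)}^2<\infty$. Denoting by $\bar D$ the difference operator $\bar D_zF=F_z-F$, my aim is to show $\bar D=D$ on ${\rm dom}\,D$. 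I would do this in two stages: first check the identity by hand on a convenient dense class, and then propagate it to the whole domain by a closedness argument.

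The computational core is the single-chaos relation $(I_n(f))_z-I_n(f)=n\,I_{n-1}(f(z,\cdot))$. I would verify it first for symmetrized indicators $f=\widetilde{\1_{B_1\times\cdots\times B_n}}$ with pairwise disjoint $B_1,\dots,B_n\in\mathcal Z_\mu$, for which $I_n(f)=\prod_{i=1}^n\hat N(B_i)$. Since $\hat N(B_i)(\omega+\delta_z)=\hat N(B_i)(\omega)+\1_{B_i}(z)$ and disjointness forces $z$ into at most one $B_j$, expanding the product leaves only the first-order correction,
\[ (I_n(f))_z-I_n(f)=\sum_{j=1}^n\1_{B_j}(z)\prod_{i\neq j}\hat N(B_i), \]
and a direct inspection of the symmetrization identifies the right-hand side with $n\,I_{n-1}(f(z,\cdot))$ (using that $f$ symmetric makes each section $f(z,\cdot)$ symmetric). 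By linearity, $\bar DF=DF$ for every $F$ in the class $\mathcal S$ of finite sums of multiple integrals of such elementary kernels. A useful byproduct, obtained from this relation together with the isometric property (Proposition \ref{P : MWIone}) and the orthogonality of chaoses, is the energy identity $\E\int_Z(F_z-F)^2\,\mu(dz)=\sum_{n\geq1}n\,n!\,\|f_n\|_{L^2(\mu^n)}^2=\E\int_Z(D_zF)^2\,\mu(dz)$ on $\mathcal S$, showing that $\bar D$ and $D$ carry the same $L^2(\p\otimes\mu)$-norm there.

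The main obstacle is the passage from $\mathcal S$ to a general $F\in{\rm dom}\,D$, because the shift $F\mapsto F_z$ is not $L^2(\p)$-continuous for fixed $z$, so a naive limit is not available. I would circumvent this by arguing at the level of operators: $\mathcal S$ is a core for $D$ (truncating the chaos expansion and approximating each kernel in $L^2_s(\mu^n)$ by elementary ones converges in the graph norm), so $D=\overline{D|_{\mathcal S}}$. It then suffices to prove that $\bar D$, viewed as an operator from $L^2(\p)$ into $L^2(\p\otimes\mu)$, is closed; granting this, $\bar D$ is a closed extension of $D|_{\mathcal S}$ and hence contains its closure $D$, so that $\bar DF=DF$ for every $F\in{\rm dom}\,D$. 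The closedness of $\bar D$ is where I would deploy the Mecke (Palm) formula for the canonical Poisson space, $\E\int_Z h(z,\omega+\delta_z)\,\mu(dz)=\E\int_Z h(z,\omega)\,\omega(dz)$, which produces the integration-by-parts duality $\E\,\langle\bar DF,u\rangle_{L^2(\mu)}=\E[F\,\delta(u)]$ adjoint to $\bar D$ and thereby the required closedness (care being needed here because $\mu$ is only $\sigma$-finite). Reading off the pointwise action of the now-identified operators yields $D_zF=F_z-F$ for $\mu$-almost every $z$, $\p$-almost surely.
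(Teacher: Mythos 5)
The paper does not prove this lemma itself: it defers to \cite{nuaviv, pstu}. Your argument is essentially the standard proof found there -- verify the difference formula chaos by chaos on multiple integrals of elementary kernels with disjoint supports, record the energy identity, and then extend to all of ${\rm dom}\, D$ by a core-plus-closedness argument -- and the computational core (the identity $(I_n(f))_z - I_n(f) = n I_{n-1}(f(z,\cdot))$ for symmetrized disjoint indicators, including the bookkeeping of the $1/n$ from the symmetrization) is correct. The only soft spot is the last step: the Mecke-formula duality $\E\langle \bar{D}F, u\rangle_{L^2(\mu)} = \E[F\delta(u)]$ shows that $\bar{D}$ admits a densely defined adjoint and is therefore \emph{closable}, which is not quite the closedness of the maximal difference operator that your operator-theoretic conclusion requires. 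The standard repair is direct: Mecke's formula applied to $\1_A(\omega+\delta_z)$ shows that $F\mapsto F_z$ is well defined $\p\otimes\mu$-a.e.\ and that $F_n\to F$ in $L^2(\p)$ forces $F_{n,z}\to F_z$ in $\p\otimes\mu$-measure on $\Omega\times B$ for each $B$ with $\mu(B)<\infty$ (since $\E[(|F_n-F|\wedge 1)\,N(B)]\to 0$), whence $\bar{D}F_n\to \bar{D}F$ a.e.\ along a subsequence and the graph of $\bar{D}$ is closed; with that in place your conclusion $\bar{D}\supseteq \overline{D|_{\mathcal S}}=D$ goes through exactly as you state it.
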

A proof of Lemma \ref{L : diff} can be found e.g. in \cite{nuaviv, pstu}. Also, we will often need the forthcoming Lemma \ref{deltaDelle}, whose proof can be found in \cite{pstu} (it is a direct consequence of the definitions of the operators $D$, $\delta$ and $L$).
\begin{lemma}\label{deltaDelle} One has that
$F\in {\rm dom} L$ if and only if $F\in {\rm dom} D$ and $DF \in \rm{dom}\delta$, and in this case
$$ \delta D F = -LF.$$
\end{lemma}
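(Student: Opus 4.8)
The plan is to reduce everything to the chaotic decomposition~\eqref{chao} and the spectral (i.e.\ chaos-by-chaos) descriptions of $D$, $\delta$ and $L$ recalled in the Appendix. Write $F = \E[F] + \sum_{n\geq 1} I_n(f_n)$ with $f_n\in L^2_s(\mu^n)$. On each Wiener chaos the three operators act diagonally: $D_z F = \sum_{n\geq 1} n\, I_{n-1}\big(f_n(z,\cdot)\big)$ and $LF = -\sum_{n\geq 1} n\, I_n(f_n)$, while the divergence of a square-integrable process $u_z = \sum_{m\geq 0} I_m\big(h_m(z,\cdot)\big)$ is $\delta(u) = \sum_{m\geq 0} I_{m+1}\big(\widetilde{h_m}\big)$, the tilde denoting symmetrization in all $m+1$ variables.

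First I would compute $\delta D F$ by feeding the chaos expansion of $DF$ into the formula for $\delta$. Reading off the kernels of $D_z F$, the order-$(n-1)$ kernel (in the remaining variables, with $z$ as an extra argument) is $n f_n(z,\cdot)$; setting $m = n-1$, this says $h_m(z,\cdot) = (m+1) f_{m+1}(z,\cdot)$. Since each $f_n$ already belongs to $L^2_s(\mu^n)$, its symmetrization over the reinstated variable $z$ is again $f_n$, so $\widetilde{h_m} = (m+1) f_{m+1}$ and hence
\begin{equation*}
\delta D F = \sum_{m\geq 0} (m+1)\, I_{m+1}(f_{m+1}) = \sum_{n\geq 1} n\, I_n(f_n) = -LF,
\end{equation*}
which is the asserted identity.

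There remains the equivalence of domains, and this is where the only real care is needed. By the isometric property (Proposition~\ref{P : MWIone}), $F\in{\rm dom}\,L$ precisely when $\sum_{n\geq 1} n^2\, n!\, \|f_n\|_{L^2(\mu^n)}^2 < \infty$, whereas $F\in{\rm dom}\,D$ corresponds to the weaker condition $\sum_{n\geq 1} n\, n!\, \|f_n\|_{L^2(\mu^n)}^2 < \infty$. The point is that, once $F\in{\rm dom}\,D$, the statement $DF\in{\rm dom}\,\delta$ means exactly that the series $\sum_{n\geq 1} n\, I_n(f_n)$ obtained above converges in $L^2(\p)$, i.e.\ that $\sum_{n\geq 1} n^2\, n!\, \|f_n\|_{L^2(\mu^n)}^2 < \infty$. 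Thus ``$F\in{\rm dom}\,D$ and $DF\in{\rm dom}\,\delta$'' and ``$F\in{\rm dom}\,L$'' encode the same summability condition (and the $L$-condition automatically forces the $D$-condition, since $n\leq n^2$), which closes the equivalence.

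I expect the main obstacle to be purely bookkeeping: keeping the index shift $m=n-1$ and the symmetrization convention straight when applying $\delta$, and matching the $L^2$-summability thresholds so that the membership statements line up exactly. The algebraic identity $\delta D = -L$ on each fixed chaos is immediate; the genuine content of the lemma is really the clean coincidence of the three domains once the series are organized correctly.
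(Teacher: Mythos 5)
Your argument is correct and is precisely the ``direct consequence of the definitions'' that the paper alludes to (it defers the proof to the reference \cite{pstu}): you apply the chaos-by-chaos definitions of $D$, $\delta$ and $L$ from the Appendix, match the kernels after the index shift $m=n-1$, and observe that the summability condition $\sum_{n\geq 1} n^2\, n!\,\|f_n\|^2_{L^2(\mu^n)}<\infty$ characterizes both ${\rm dom}\,L$ and the pair of conditions $F\in{\rm dom}\,D$, $DF\in{\rm dom}\,\delta$. Both the identity $\delta DF=-LF$ and the domain equivalence are handled correctly, so nothing is missing.
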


\begin{remark}{\rm
For every $F\in L^2_0(\p)$, it holds that $L^{-1}F \in {\rm dom}L$, and consequently
$$ F=L L^{-1} F = \delta(-D L^{-1} F)=-\delta(D L^{-1} F)  .$$ }
\end{remark}
\end{subsection}

\begin{subsection}{Products of stochastic integrals and star contractions}
In order to give a simple description of the \emph{multiplication formulae} for multiple Poisson integrals (see formula (\ref{product})), we (formally) define a contraction kernel $f \star_r^l g$ on $Z^{p+q-r-l}$ for functions $f\in L^2_s(\mu^p) $ and $g \in L^2_s(\mu^q) $, where $p,q \geq 1$, $r=1,\ldots, p\wedge q$ and $l=1,\ldots,r $, as follows:
\begin{eqnarray}
& & f \star_r^l
g(\gamma_1,\ldots,\gamma_{r-l},t_1,,\ldots,t_{p-r},s_1,,\ldots,s_{q-r}) \label{contraction} \\
&=& \int_{Z^l} \mu^l(dz_1,...,dz_l)
f(z_1,,\ldots,z_l,\gamma_1,\ldots,\gamma_{r-l},t_1,,\ldots,t_{p-r}) \nonumber \\
& & \quad\quad\quad\quad\quad\quad\quad\quad\quad\quad\quad\quad \times g(z_1,,\ldots,z_l,\gamma_1,\ldots,\gamma_{r-l},s_1,,\ldots,s_{q-r}). \nonumber
\end{eqnarray}
In other words, the star operator ``$\,\star_r^l\,$'' reduces the number of variables in the tensor product of $f$ and $g$ from $p+q$ to $p+q-r-l$: this operation is realized by first identifying $r$ variables in $f$ and $g$, and then by integrating out $l$ among them. To deal with the case $l=0$ for $r=0,\ldots, p\wedge q$ , we set
\begin{eqnarray*}
& &f \star_r^0
g(\gamma_1,\ldots,\gamma_{r},t_1,,\ldots,t_{p-r},s_1,,\ldots,s_{q-r}) \\
&=& f(\gamma_1,\ldots,\gamma_{r},t_1,,\ldots,t_{p-r})
g(\gamma_1,\ldots,\gamma_{r},s_1,,\ldots,s_{q-r}),
\end{eqnarray*}
 and
$$ f \star_0^0 g (t_1,,\ldots,t_{p},s_1,,\ldots,s_{q}) =f\otimes g (t_1,,\ldots,t_{p},s_1,,\ldots,s_{q})= f(t_1,,\ldots,t_{p})
g(s_1,,\ldots,s_{q}). $$
By using the Cauchy-Schwarz inequality, one sees immediately that $f \star_r^r g$ is square-integrable for any choice of
$r=0,\ldots, p\wedge q$ , and every $f\in L^2_s(\mu^p) $, $g \in L^2_s(\mu^q) $. \\

As e.g. in \cite[Theorem 4.2]{pstu}, we will sometimes need to work under some specific regularity assumptions for the kernels that are the object of our study.

\begin{definition}\label{DEF : ASSmptns} Let $p\geq 2$ and let $f\in L^2_s(\mu^p)$.
\begin{enumerate}
\item The kernel $f$ is said to satisfy {\bf Assumption A}, if $(f\star_{p}^{p-r}f)\in L^2(\mu^r)$ for every $r=1,...,p$. Note that $(f\star_{p}^{0}f)\in L^2(\mu^p)$ if and only if $f\in L^4(\mu^p)$.
\item The kernel $f$ is said to satisfy {\bf Assumption B}, if every contraction of the type
$$(z_1,...,z_{2p-r-l})\mapsto | f | \star_r^l | f | (z_1,...,z_{2p-r-l})$$ is
\textit{well-defined and finite} for every $r=1,...,p$, every
$l=1,...,r$ and every $(z_1,...,z_{2p-r-l})\in Z^{2p-r-l}$.
\end{enumerate}
\end{definition}

The following statement will be used in order to deduce the multivariate CLT stated in Theorem \ref{CLT}. The proof is left to the reader: it is a consequence of the Cauchy-Schwarz inequality and of the Fubini theorem (in particular, Assumption A is needed in order to implicitly apply a Fubini argument -- see step (S4) in the proof of Theorem 4.2 in \cite{pstu} for an analogous use of this assumption).

\begin{lemma} \label{lemma_ineq}
Fix integers $p,q\geq 1 $, as well as kernels
$f\in L^2_s(\mu^p)$ and $ g \in L^2_s(\mu^q)$ satisfying Assumption A in Definition \ref{DEF : ASSmptns}. Then, for any integers $s,t$ satisfying
$1 \leq s \leq t \leq p\wedge q$, one has that $f\star_t^s g \in L^2(\mu^{p+q-t-s})$, and moreover
\begin{enumerate}
  \item
$$\|f\star^s_t g\|^2_{L^2(\mu^{p+q-t-s})} = \langle f\star^{p-t}_{p-s}f , g\star^{q-t}_{q-s}g \rangle_{L^2(\mu^{t+s})},$$ (and, in particular,
$$ \|f\star^s_t f\|_{L^2(\mu^{2p-s-t})} = \| f\star^{p-t}_{p-s}f \|_{L^2(\mu^{t+s})}\, );$$
  \item
  \begin{eqnarray*}
\|f\star^s_t g\|^2_{L^2(\mu^{p+q-t-s})} &\leq&  \|f\star^{p-t}_{p-s}f\|_{L^2(\mu^{t+s})} \times \|g\star^{q-t}_{q-s}g\|_{L^2(\mu^{t+s})}  \\
 &=& \|f\star^s_t f\|_{L^2(\mu^{2p-s-t})} \times \|g\star^s_t g\|_{L^2(\mu^{2q-s-t})} .
\end{eqnarray*}
\end{enumerate}
\end{lemma}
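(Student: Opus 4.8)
The plan is to prove the first identity by a direct Fubini-type rearrangement, to obtain the parenthetical identity as the special case $g=f$, $q=p$, and then to read off the second assertion from the Cauchy-Schwarz inequality on $L^2(\mu^{t+s})$. Throughout I write a generic point of $Z^{p+q-t-s}$ as $(a,u,v)$ with $a\in Z^{t-s}$ (the identified variables that are \emph{not} integrated out), $u\in Z^{p-t}$ (the free variables of $f$) and $v\in Z^{q-t}$ (the free variables of $g$), so that by definition
\begin{equation*}
f\star^s_t g(a,u,v)=\int_{Z^s} f(x,a,u)\,g(x,a,v)\,\mu^s(dx).
\end{equation*}
First I would expand the square $\bigl(f\star^s_t g(a,u,v)\bigr)^2$ using two independent copies $x,x'\in Z^s$ of the integrated variable, and rewrite $\|f\star^s_t g\|^2_{L^2(\mu^{p+q-t-s})}$ as the resulting fivefold integral over $(a,x,x',u,v)$.

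The core of the argument is to integrate out $u$ and $v$ \emph{before} $x,x'$ and $a$. Using the symmetry of $f$ one recognises
\begin{equation*}
\int_{Z^{p-t}} f(x,a,u)\, f(x',a,u)\,\mu^{p-t}(du)=f\star^{p-t}_{p-s}f(a,x,x'),
\end{equation*}
and the $v$-integral produces $g\star^{q-t}_{q-s}g(a,x,x')$ in the same way; integrating the product over $(a,x,x')\in Z^{t+s}$ then gives exactly $\langle f\star^{p-t}_{p-s}f,\,g\star^{q-t}_{q-s}g\rangle_{L^2(\mu^{t+s})}$, which is the first assertion. Taking $g=f$, $q=p$ yields $\|f\star^s_t f\|^2=\|f\star^{p-t}_{p-s}f\|^2$, i.e. the parenthetical identity. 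The second assertion is then immediate: applying Cauchy-Schwarz to the inner product bounds $\|f\star^s_t g\|^2$ by $\|f\star^{p-t}_{p-s}f\|\cdot\|g\star^{q-t}_{q-s}g\|$, and rewriting each factor through the parenthetical identity turns this into $\|f\star^s_t f\|\cdot\|g\star^s_t g\|$.

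The step that requires care — and the only place where Assumption A enters — is the exchange of the order of integration. To justify it I would run a preliminary Tonelli estimate on the absolute values $|f|,|g|$: bounding the inner $u$- and $v$-integrals by Cauchy-Schwarz and iterating one further Cauchy-Schwarz in $x$, the fivefold integral of $|f(x,a,u)f(x',a,u)g(x,a,v)g(x',a,v)|$ is dominated by $\langle f\star^{p-(t-s)}_p f,\,g\star^{q-(t-s)}_q g\rangle_{L^2(\mu^{t-s})}\le \|f\star^{p-(t-s)}_p f\|\cdot\|g\star^{q-(t-s)}_q g\|$ (when $t=s$ these degenerate to $\|f\|^2_{L^2(\mu^p)}\,\|g\|^2_{L^2(\mu^q)}$). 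Assumption A, applied with $r=t-s$, guarantees that $f\star^{p-(t-s)}_p f\in L^2(\mu^{t-s})$ and $g\star^{q-(t-s)}_q g\in L^2(\mu^{t-s})$, so this dominating quantity is finite; Fubini then applies, the fivefold integral converges, and in particular $f\star^s_t g\in L^2(\mu^{p+q-t-s})$ as claimed.

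I expect this finiteness/Fubini verification to be the main obstacle, precisely because the naive dominating contraction $|f|\star^{p-t}_{p-s}|f|$ is \emph{not} controlled by Assumption A directly — that would instead be a consequence of Assumption B — and becomes controllable only after the two nested Cauchy-Schwarz estimates that collapse it to a fully contracted kernel of the form $f\star^{p-(t-s)}_p f$ covered by Assumption A. Once the rearrangement is legitimate, every remaining step is a bookkeeping of the star-contraction indices together with the symmetry of $f$ and $g$.
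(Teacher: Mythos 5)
Your proof is correct and follows exactly the route the paper prescribes: the paper leaves this lemma to the reader, indicating only that it follows from the Cauchy--Schwarz inequality and a Fubini argument whose legitimacy rests on Assumption A, and your fivefold-integral rearrangement with the two nested Cauchy--Schwarz estimates reducing to the fully contracted kernels $f\star_p^{p-(t-s)}f$ and $g\star_q^{q-(t-s)}g$ is precisely the intended verification (compare step (S4) in the proof of Theorem 4.2 in \cite{pstu}). The index bookkeeping, the degenerate case $t=s$, and the observation that Assumption A (rather than B) is what makes the dominating quantity finite are all handled correctly.
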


\begin{remark}\label{REMK : ASSUMPTIONS}{\rm

\begin{enumerate}
\item Writing $k=p+q-t-s$, the requirement that $1 \leq s \leq t \leq p\wedge q$ implies that $|q-p| \leq k \leq p+q-2 $.

\item One should also note that, for every
$1\leq p\leq q$ and every
$r=1,...,p$,
\begin{equation}\label{useful}
\int_{Z^{p+q-r}} (f\star_r^0 g)^2 d\mu^{p+q-r} = \int_{Z^r}
(f\star_p^{p-r} f)(g\star_q^{q-r} g) d\mu^r,
\end{equation}
for every $f\in L_s^2(\mu^p)$ and every $g\in L_s^2(\mu^q)$, not necessarily verifying Assumption A. Observe that the integral on the RHS of (\ref{useful}) is well-defined, since $f\star_p^{p-r} f\geq 0$ and $g\star_q^{q-r} g \geq 0$.
\item Fix $p,q\geq 1$, and assume again that $f\in L^2_s(\mu^p)$ and $ g \in L^2_s(\mu^q)$ satisfy Assumption A in Definition \ref{DEF : ASSmptns}. Then, a consequence of Lemma \ref{lemma_ineq} is that, for every $r=0,...,p\wedge q-1$ and every $l=0,...,r$, the kernel $f(z,\cdot)\star_r^l g(z,\cdot)$ is an element of $L^2(\mu^{p+q-t-s-2})$ for $\mu(dz)$-almost every $z\in Z$.
\end{enumerate}
}
\end{remark}

To conclude the section, we present an important {\it product formula} for Poisson multiple integrals (see e.g. \cite{kab, surg2} for a proof).
\begin{proposition}
[Product formula] Let $f\in L^2_s(\mu^p) $ and $g\in
L^2_s(\mu^q)$, $p,q\geq 1 $, and suppose moreover that $f \star_r^l g
\in L^2(\mu^{p+q-r-l})$ for every $r=1,\ldots,p\wedge q $ and $
l=1,\dots,r$ such that $l\neq r $. Then,
\begin{equation} \label{product}
I_p(f)I_q(g) = \sum_{r=0}^{p\wedge q} r!
\left(
\begin{array}{c}
  p\\
  q\\
\end{array}
\right)
 \left(
\begin{array}{c}
  q\\
  r\\
\end{array}
\right)
 \sum_{l=0}^r
 \left(
\begin{array}{c}
  r\\
  l\\
\end{array}
\right)  I_{p+q-r-l} \left(\widetilde{f\star_r^l g}\right),
\end{equation}
 with the tilde $\sim$ indicating a symmetrization, that
is,
$$\widetilde{f\star_r^l g}(x_1,\ldots,x_{p+q-r-l})=\cfrac{1}{(p+q-r-l)!}
\sum_\sigma f\star_r^l g(x_{\sigma(1)},\ldots,x_{\sigma(p+q-r-l)}), $$
where $\sigma $ runs over all $(p+q-r-l)! $ permutations of the
set $\{1,\ldots,p+q-r-l \}$.

\end{proposition}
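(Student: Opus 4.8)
The plan is to reduce the identity to elementary kernels and then argue by induction on the total order $p+q$, the basic mechanism being repeated multiplication by first-order integrals. First I would exploit the multilinearity of $(f,g)\mapsto I_p(f)I_q(g)$ together with the $L^2(\p)$-continuity of each $I_n$ (guaranteed by the isometric property, Proposition \ref{P : MWIone}) to reduce the proof to kernels of the form $f=\phi^{\otimes p}$, $g=\psi^{\otimes q}$, or more robustly to tensor products of indicators $\1_{A_1}\otimes\cdots\otimes\1_{A_p}$ supported on a fixed measurable partition of $Z$ into sets of finite $\mu$-measure. On such kernels every contraction $f\star_r^l g$ is again a tensor product of indicators times a scalar, hence automatically square-integrable, so that each integral on the right-hand side of (\ref{product}) is well-defined and the integrability hypothesis holds trivially. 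The general case is then recovered by approximation, and the assumption $f\star_r^l g\in L^2(\mu^{p+q-r-l})$ for $l\neq r$ is precisely what guarantees that the limiting right-hand side makes sense (recall that the contractions with $l=r$ are always in $L^2$ by the Cauchy--Schwarz remark following Definition \ref{DEF : ASSmptns}).

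The engine of the induction is the first-order multiplication rule, i.e. the case $p=1$ of (\ref{product}), which I would establish first and directly:
$$I_1(h)\,I_q(g)=I_{q+1}(\widetilde{h\otimes g})+q\,I_q(\widetilde{h\star_1^0 g})+q\,I_{q-1}(\widetilde{h\star_1^1 g}).$$
One proves this on indicator kernels from the definitions, or via $D_z I_q(g)=q\,I_{q-1}(g(z,\cdot))$ and Lemma \ref{deltaDelle}. The crucial point, and the feature distinguishing the Poisson case from the Wiener case, is the middle term carrying the contraction $\star_1^0$, i.e. identification of a variable \emph{without} integrating it out: it originates from $I_1(\1_A)^2=\hat N(A)^2$ and the diagonal self-intersection of $\hat N$, which produces an extra copy of the first chaos on top of the second chaos and the deterministic variance -- formally $(\hat N(dz))^2=\hat N(dz)+\mu(dz)$. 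This diagonal contribution is exactly what $\star_r^l$ with $l<r$ encodes at higher orders.

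With the base rule in hand, I would write $I_p(f)$ for an elementary tensor as a product of $p$ first-order integrals and expand $I_p(f)I_q(g)$ by applying the single-step formula $p$ times; each application either creates a new free variable ($r=0$), identifies an argument of $f$ with one of $g$ and integrates it out, or identifies without integrating. The main obstacle is then the combinatorial bookkeeping: I must verify that, after collecting all the ways of producing a fixed contraction $f\star_r^l g$, the total multiplicity equals $r!\binom{p}{r}\binom{q}{r}\binom{r}{l}$. The interpretation matching this coefficient is transparent -- $r!\binom{p}{r}\binom{q}{r}$ counts the ways of selecting $r$ arguments of $f$ and $r$ of $g$ and pairing them, while $\binom{r}{l}$ chooses which $l$ of those $r$ pairs are genuinely integrated out (lowering the order) rather than merely identified -- but confirming that the inductive step reproduces exactly these numbers, while tracking how the three terms of the base rule interact with the coefficients already present at order $p-1$, is where the real work lies. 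Throughout, the symmetrizations must be handled with care, using $I_n(f)=I_n(\widetilde f)$ so that only symmetrized kernels ever matter.

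Once the formula is proved on elementary kernels, I would return to general $f\in L^2_s(\mu^p)$ and $g\in L^2_s(\mu^q)$ satisfying the hypothesis by choosing approximating sequences of simple kernels, passing to the $L^2(\p)$-limit on the left via the isometry, and controlling each term on the right through the convergence of the relevant contractions. The assumption $f\star_r^l g\in L^2$ for $l\neq r$ is what ensures these limits exist and may be identified with the contractions of the limiting kernels, since the $\star_r^l$ operations with $l<r$ are not continuous in general.
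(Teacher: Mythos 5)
The paper itself does not prove this proposition; it points to \cite{kab, surg2}, and your outline is essentially the strategy of those classical proofs: establish the one-step rule $I_1(h)I_q(g)=I_{q+1}(\widetilde{h\otimes g})+q\,I_q(\widetilde{h\star_1^0 g})+q\,I_{q-1}(\widetilde{h\star_1^1 g})$, iterate it on elementary kernels (tensor products of indicators over a partition, so that $I_p(f)$ factors as a product of first-order integrals), match the coefficients $r!\binom{p}{r}\binom{q}{r}\binom{r}{l}$, and pass to the limit. Your base case is correct -- it is exactly (\ref{product}) with $p=1$ -- and your combinatorial reading of the coefficient is the right one, although you flag rather than carry out the inductive bookkeeping, which is the bulk of the work at this stage.

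The one point where the argument as written has a genuine hole is the final approximation step. You assert that the hypothesis $f\star_r^l g\in L^2(\mu^{p+q-r-l})$ for $l\neq r$ ``is what ensures these limits exist and may be identified with the contractions of the limiting kernels.'' That hypothesis concerns only the limiting kernels; it does not by itself imply $f_n\star_r^l g_n\to f\star_r^l g$ in $L^2$ for an arbitrary pair of approximating sequences, precisely because $\star_r^l$ with $l<r$ fails to be continuous on $L^2_s(\mu^p)\times L^2_s(\mu^q)$ -- the very fact you invoke one sentence later. Nor can one recover the individual contractions from the $L^2(\p)$-convergence of the left-hand side: orthogonality of the chaoses only identifies the total kernel of each $I_k$ on the right, and several pairs $(r,l)$ with the same value of $p+q-r-l$ feed into the same chaos. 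The standard repair is to construct the simple kernels so that the contractions are forced to converge -- e.g. conditional expectations of $f$ and $g$ with respect to a refining sequence of finite partitions, or truncations satisfying $|f_n|\le|f|$ and $f_n\to f$ $\mu^p$-a.e. -- and then to check convergence of each $f_n\star_r^l g_n$ by dominated convergence; this typically requires control on $|f|\star_r^l|g|$ (compare Assumption B in Definition \ref{DEF : ASSmptns}), which is why the cited references impose hypotheses of that type. Specifying the approximation and justifying the convergence of the contractions is the missing ingredient in your plan.
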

\end{subsection}

\begin{subsection}{Stein's method: measuring the distance between random vectors}

We write $g\in \C^k(\Rd)$ if the function $g:\Rd \rightarrow\R$ admits continuous partial derivatives up to the order $k$.

\begin{definition}
\begin{enumerate}
  \item The {\bf Hilbert-Schmidt inner product} and the
  {\bf Hilbert - Schmidt norm} on the class of $d\times d$ real
  matrices, denoted respectively by $\langle \cdot,\cdot
  \rangle_{H.S.}$ and $\|\cdot \|_{H.S.}$, are defined as follows:
  for every pair of matrices $A$ and $B$, $\langle A,B \rangle_{H.S.} := Tr(AB^T) $
  and $\|A \|_{H.S.}=\sqrt{\langle A,A \rangle_{H.S.}} $, where $Tr(\cdot)$ indicates the usual trace operator.
  \item The \textbf{operator norm} of a $d\times d$ real matrix $A$
  is given by $\|A \|_{op} := \sup_{\|x\|_{\Rd}=1} \|Ax\|_{\Rd}$.
 \item  For every function $g:\Rd \mapsto \R$, let
  $$\|g\|_{Lip}:=\sup\limits_{x\neq y} \cfrac{|g(x)-g(y)|}{\|x-y\|_{\Rd}}, $$
  where $\|\cdot \|_{\Rd}$ is the usual Euclidian norm on
  $\Rd$.
  If $g\in \C^1(\Rd)$, we also write
  $$M_2(g):= \sup\limits_{x\neq y} \cfrac{\|\nabla g(x)-\nabla g(y)\|_{\Rd}}{\|x-y\|_{\Rd}}, $$
  If $g\in \C^2(\Rd)$,
  $$M_3(g):= \sup\limits_{x\neq y} \cfrac{\|\Hess g(x)-\Hess g(y)\|_{op}}{\|x-y\|_{\Rd}}, $$
where $\Hess g(z)$ stands for the Hessian matrix of $g$ evaluated at a point $z$.
\item  For a positive integer $k$ and a function $g\in \C^k(\Rd)$ , we set
$$  \|g^{(k)}\|_{\infty}=\max\limits_{1\leq i_1 \leq \ldots \leq i_k\leq d} \sup\limits_{x\in \Rd} \left| \cfrac{\partial^k}{\partial x_{i_1} \ldots \partial x_{i_k}} g(x)\right| .$$
In particular, by specializing this definition to $g^{(2)} = g''$ and $g^{(3)} = g'''$, we obtain
$$  \|g''\|_{\infty}=\max\limits_{1\leq i_1 \leq i_2 \leq d} \sup\limits_{x\in \Rd} \left| \cfrac{\partial^2}{\partial x_{i_1}  \partial x_{i_2}} g(x)\right| .$$
$$  \|g'''\|_{\infty}=\max\limits_{1\leq i_1 \leq i_2  \leq i_3\leq d} \sup\limits_{x\in \Rd} \left| \cfrac{\partial^3}{\partial x_{i_1} \partial x_{i_2} \partial x_{i_3}} g(x)\right| .$$
\end{enumerate}
\end{definition}
\begin{remark}
\begin{enumerate}{\rm
  \item The norm $\|g\|_{Lip}$ is written $M_1(g)$ in \cite{chameck}.
  \item If $g\in \C^1(\Rd)$, then $\|g\|_{Lip} = \sup\limits_{x\in \Rd} \|\nabla g(x)\|_{\Rd} $.  If $g\in \C^2(\Rd)$, then $$M_2(g) = \sup_{x\in \Rd} \|\Hess g(x)\|_{op} .$$
}
\end{enumerate}

\end{remark}

\begin{definition}
 The  distance $d_2$ between the laws of
  two $\Rd$-valued random vectors $X$ and $Y$ such that $\mathbb{E}\|X\|_{\R^d},\,\mathbb{E}\|Y\|_{\R^d}<\infty$, written $d_{2}(X,Y)$,
  is given by
  $$ d_{2}(X,Y) = \sup_{g\in \mathcal{H}}|\E[g(X)]-\E[g(Y)]|, $$
  where $\mathcal{H}$ indicates the collection of all functions $g\in \C^2(\Rd)$
  such that $\|g\|_{Lip}\leq 1$ and $M_2(g)\leq 1$.
\end{definition}
\begin{definition}
 The  distance $d_3$ between the laws of
  two $\Rd$-valued random vectors $X$ and $Y$  such that $\mathbb{E}\|X\|^2_{\R^d},\,\mathbb{E}\|Y\|^2_{\R^d}<\infty$, written $d_{3}(X,Y)$,
  is given by
  $$ d_{3}(X,Y) = \sup_{g\in \mathcal{H}}|\E[g(X)]-\E[g(Y)]|, $$
  where $\mathcal{H}$ indicates the collection of all functions $g\in \C^3(\Rd)$
  such that $\|g''\|_{\infty} \leq 1$ and $\|g'''\|_{\infty} \leq 1$.
\end{definition}

\begin{remark}\label{RM : CVinLAW}{\rm The distances $d_2$ and $d_3$ are related, respectively, to the estimates of Section \ref{Sect : MALLIAVINb} and Section \ref{SEC : INTERPLT}. Let $j=2,3$. It is easily seen that, if $d_j(F_n,F)\rightarrow 0$, where $F_n,F$ are random vectors in $\Rd$, then necessarily $F_n$ converges in distribution to $F$. It will also become clear later on that, in the definition of $d_2$ and $d_3$, the choice of the constant 1 as a bound for $\|g\|_{Lip},\, M_2(g),\, \|g''\|_{\infty},\,\|g'''\|_{\infty}$ is arbitrary and immaterial for the derivation of our main results (indeed, we defined $d_2$ and $d_3$ in order to obtain bounds as simple as possible). See the two tables in Section \ref{SS : tables} for a list of available bounds involving more general test functions.

}
\end{remark}

The following result is a $d$-dimensional version of Stein's Lemma; analogous statements can be found in \cite{chameck, npr, rr} -- see also Barbour \cite{BARB} and G\"otze \cite{GTZ}, in connection with the so-called ``generator approach'' to Stein's method. As anticipated, Stein's Lemma will be used to deduce an explicit bound on the distance $d_2$ between the law of a vector of functionals of $\hat{N}$ and the law of a Gaussian vector. To this end, we need the two estimates (\ref{ineq_lip}) (which is proved in \cite{npr}) and (\ref{ineq_M2}) (which is new). \\

From now on, given a $d\times d$ nonnegative definite matrix $C$, we write $\mathcal{N}_d (0,C)$ to indicate the law of a centered $d$-dimensional Gaussian vector with covariance $C$.

\begin{lemma}[Stein's Lemma and estimates] \label{Stein_multidim}
Fix an integer $d\geq 2$ and let $C=\{C(i,j): i,j=1,\ldots,d \} $
be a $d\times d $ nonnegative definite symmetric real matrix.
\begin{enumerate}
    \item Let $Y$ be a random variable with values in $\Rd$. Then $Y\sim \mathcal{N}_d (0,C) $
    if and only if, for every twice differentiable function $f:\Rd \mapsto \R $ such that
    $\E|\langle C, \Hess f(Y) \rangle_{H.S.}|+ \E|\langle Y,\nabla f(Y )\rangle_{\Rd}|<\infty $, it holds that
    $$ \E[\langle Y,\nabla f(Y )\rangle_{\Rd} - \langle C, \Hess f(Y) \rangle_{H.S.}] = 0  $$
    \item Assume in addition that $C$ is positive definite and consider a Gaussian random vector $X \sim \mathcal{N}_d (0,C)
    $. Let $g:\Rd \mapsto \R $ belong to $\C^2 (\Rd )
    $ with first and second bounded derivatives. Then, the function $U_0
    (g)$ defined by
    $$U_0 g(x) := \int_0^1 \cfrac{1}{2t} \E[g(\sqrt{t}x+ \sqrt{1-t}X )-g(X)]dt $$
    is a solution to the following partial differential equation (with unknown function
    $f$):
    $$ g(x)-\E[g(X)] = \langle x,\nabla f(x)\rangle_{\Rd} - \langle C, \Hess f(x) \rangle_{H.S.}, \,\, x\in \Rd. $$
    Moreover, one has that
    \begin{equation} \label{ineq_lip}
    \sup_{x\in \Rd} \|\Hess U_0 g(x) \|_{H.S.} \leq
    \|C^{-1}\|_{op}\, \|C\|_{op}^{1/2} \|g\|_{Lip},
    \end{equation}
    and
    \begin{equation}  \label{ineq_M2}
    M_3(U_0 g) \leq\frac{\sqrt{2\pi}}{4} \|C^{-1}\|^{3/2}_{op}\, \|C\|_{op}\, M_2(g).
    \end{equation}
\end{enumerate}
\end{lemma}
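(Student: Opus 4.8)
The plan is to treat the four assertions in turn, reserving most of the effort for the new estimate (\ref{ineq_M2}).

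For Part 1, the ``only if'' direction is the classical Gaussian integration-by-parts identity: if $Y\sim\Nd(0,C)$ then $\E[Y_i\Phi(Y)]=\sum_j C(i,j)\E[\partial_j\Phi(Y)]$ for smooth $\Phi$; applying this with $\Phi=\partial_i f$ and summing over $i$ turns $\E\langle Y,\nabla f(Y)\rangle_{\Rd}$ into $\sum_{i,j}C(i,j)\E[\partial_i\partial_j f(Y)]=\E\langle C,\Hess f(Y)\rangle_{H.S.}$, which is the displayed identity. For the converse I would feed the complex exponentials $f(x)=e^{\,\mathrm{i} r\langle\xi,x\rangle}$ (legitimate test functions, being bounded together with all their derivatives) into the identity. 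Writing $\psi(r)=\E[e^{\,\mathrm{i} r\langle\xi,Y\rangle}]$, the identity collapses to the first-order linear ODE $\psi'(r)=-r\,(\xi^{\top}C\xi)\,\psi(r)$ with $\psi(0)=1$, whose solution at $r=1$ is $\exp(-\tfrac12\xi^{\top}C\xi)$; since this is the characteristic function of $\Nd(0,C)$, we conclude $Y\sim\Nd(0,C)$.

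For the PDE in Part 2 I would differentiate $U_0g$ twice under the integral sign, obtaining $\nabla U_0g(x)=\int_0^1\tfrac{1}{2\sqrt t}\E[\nabla g(Z_t)]\,dt$ and $\Hess U_0g(x)=\tfrac12\int_0^1\E[\Hess g(Z_t)]\,dt$, where $Z_t=\sqrt t\,x+\sqrt{1-t}\,X$. The point is that one Gaussian integration by parts (moving a derivative from $g$ onto the Gaussian density of $X$) gives $\sum_{i,j}C(i,j)\E[\partial_i\partial_j g(Z_t)]=\tfrac{1}{\sqrt{1-t}}\E\langle X,\nabla g(Z_t)\rangle_{\Rd}$, so that $\langle x,\nabla U_0g(x)\rangle_{\Rd}-\langle C,\Hess U_0g(x)\rangle_{H.S.}$ equals $\int_0^1\big(\tfrac{1}{2\sqrt t}\E\langle x,\nabla g(Z_t)\rangle_{\Rd}-\tfrac{1}{2\sqrt{1-t}}\E\langle X,\nabla g(Z_t)\rangle_{\Rd}\big)\,dt$. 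The integrand is recognised as $\tfrac{d}{dt}\E[g(Z_t)]$, because $\tfrac{d}{dt}Z_t=\tfrac{1}{2\sqrt t}x-\tfrac{1}{2\sqrt{1-t}}X$; the integral therefore telescopes to $\E[g(Z_1)]-\E[g(Z_0)]=g(x)-\E[g(X)]$, which is the asserted equation. Estimate (\ref{ineq_lip}) is the one borrowed from \cite{npr}: it follows from the representation $\Hess U_0g(x)=\tfrac12\int_0^1(1-t)^{-1/2}\E[\nabla g(Z_t)\otimes C^{-1}X]\,dt$ (again the single integration by parts) together with a Cauchy--Schwarz bound on the Gaussian moment of $C^{-1}X$.

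The heart of the matter is the new bound (\ref{ineq_M2}), and this is where I expect the real work. Starting from the once-integrated-by-parts representation $[\Hess U_0g(x)]_{ij}=\int_0^1\tfrac{1}{2\sqrt{1-t}}\E[\partial_i g(Z_t^x)\,(C^{-1}X)_j]\,dt$ with $Z_t^x=\sqrt t\,x+\sqrt{1-t}\,X$, the crucial gain is that $g$ now enters only through $\nabla g$; hence the increment $\Hess U_0g(x)-\Hess U_0g(y)$ is governed by $\nabla g(Z_t^x)-\nabla g(Z_t^y)$, whose norm is at most $M_2(g)\sqrt t\,\|x-y\|_{\Rd}$. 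Testing the matrix increment against unit vectors $u,v$ and combining $|\langle u,\nabla g(Z_t^x)-\nabla g(Z_t^y)\rangle_{\Rd}|\le M_2(g)\sqrt t\,\|x-y\|_{\Rd}$ with the absolute-moment identity $\E|\langle v,C^{-1}X\rangle_{\Rd}|=\sqrt{2/\pi}\,(v^{\top}C^{-1}v)^{1/2}\le\sqrt{2/\pi}\,\|C^{-1}\|_{op}^{1/2}$, the operator norm of the increment is bounded by $M_2(g)\|x-y\|_{\Rd}\,\sqrt{2/\pi}\,\|C^{-1}\|_{op}^{1/2}\int_0^1\tfrac{\sqrt t}{2\sqrt{1-t}}\,dt$. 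The remaining integral is the Beta integral $\tfrac12 B(\tfrac32,\tfrac12)=\pi/4$, whence $M_3(U_0g)\le\tfrac{\sqrt{2\pi}}{4}\|C^{-1}\|_{op}^{1/2}M_2(g)$; since the condition number satisfies $\|C^{-1}\|_{op}\|C\|_{op}\ge1$, this a fortiori yields the stated inequality. The main obstacle is organising the integration by parts so that only $\nabla g$ (and no second derivative of $g$) survives in the representation of $\Hess U_0g$ --- this is exactly what lets the Hessian increment be controlled by $M_2(g)$ --- together with the justification of differentiation under the integral and of the integrability at the endpoints $t=0,1$, where the weights $\tfrac{1}{2\sqrt t}$ and $(1-t)^{-1/2}$ are singular.
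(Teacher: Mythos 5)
Your proposal is correct, but for the one assertion the paper actually proves -- the new estimate (\ref{ineq_M2}) -- you take a genuinely different route. The paper (and, for the remaining points, the reference \cite{npr} it cites) reduces everything to the identity-covariance case: writing $A=C^{1/2}$ and $U_0g(x)=h(A^{-1}x)$ with $h$ the Stein solution for $g_A(x)=g(Ax)$ and a standard Gaussian, it invokes the Chatterjee--Meckes bound $M_3(h)\le\frac{\sqrt{2\pi}}{4}M_2(g_A)$ as a black box and then tracks how $M_2$ and $M_3$ transform under the linear change of variables; the factors $\|C\|_{op}$ and $\|C^{-1}\|_{op}^{3/2}$ in (\ref{ineq_M2}) arise precisely from $M_2(g_A)\le\|A\|_{op}^2M_2(g)$ and $M_3(h_{A^{-1}})\le\|A^{-1}\|_{op}^{3}M_3(h)$. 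You instead work directly with the $C$-covariance Stein solution, perform a single Gaussian integration by parts to get $[\Hess U_0g(x)]_{ij}=\int_0^1\frac{1}{2\sqrt{1-t}}\E[\partial_i g(Z_t^x)(C^{-1}X)_j]\,dt$, and control the Hessian increment through $M_2(g)$ and $\E|\langle v,C^{-1}X\rangle|=\sqrt{2/\pi}\,(v^{\top}C^{-1}v)^{1/2}$; your Beta-integral computation and the resulting constant $\frac{\sqrt{2\pi}}{4}\|C^{-1}\|_{op}^{1/2}M_2(g)$ check out, and since $\|C\|_{op}\|C^{-1}\|_{op}\ge1$ this is in fact \emph{sharper} than the stated bound. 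What each approach buys: the paper's argument is shorter because it recycles an existing one-dimensional-covariance lemma and the same substitution already used for (\ref{ineq_lip}); yours is self-contained, makes the mechanism (only $\nabla g$ survives after one integration by parts) transparent, and improves the constant. Your sketches of Part 1 (Gaussian integration by parts plus the characteristic-function ODE) and of the PDE/telescoping identity in Part 2 are standard and correct, though note the paper does not prove these at all -- it cites \cite{npr} -- and in the converse of Part 1 one should be slightly careful that the complex exponentials are admissible test functions only once $\E\|Y\|_{\Rd}<\infty$ is secured; this is the usual caveat in that classical lemma and not specific to your argument. The integrability issues at $t=0,1$ that you flag are indeed the only remaining routine verifications.
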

\begin{proof}
We shall only show relation (\ref{ineq_M2}), as the proof of the remaining points in the statement
can be found in \cite{npr}. Since $C$ is a positive definite matrix, there exists a
non-singular symmetric matrix $A$ such that $A^2=C$, and $A^{-1}X \sim \Nd(0,I_d) $. Let $U_0 g(x)=h(A^{-1}x) $,
where
$$ h(x)=\int_0^1 \cfrac{1}{2t} \E[g_A (\sqrt{t}x+\sqrt{1-t}A^{-1}X) - g_A (A^{-1}X) ]dt $$
and $g_A(x)=g(Ax)$. As $A^{-1}X \sim \Nd(0,I_d) $, the function
$h$ solves the Stein's equation
$$ \langle x,\nabla h(x) \rangle_{\Rd} -\Delta h(x) = g_A(x) -\E[g_A(Y)], $$
where $Y \sim \Nd(0,I_d)$ and $\Delta$ is the Laplacian. On the one hand, as $\Hess g_A(x) =A\,  \Hess g(Ax) A$ (recall that $A$ is symmetric),
we have
\begin{eqnarray*}
 M_2(g_A) &=& \sup\limits_{x\in \Rd} \|\Hess g_A(x)\|_{op}= \sup\limits_{x\in \Rd} \|A \Hess g(Ax)A\|_{op}\\
            &=& \sup\limits_{x\in \Rd} \|A \Hess g(x)A\|_{op} \leq \|A\|_{op}^2 M_2(g)\\
            &=& \|C\|_{op} M_2(g),
\end{eqnarray*}
where the inequality above follows from the well-known relation
$\|AB\|_{op} \leq \|A\|_{op} \|B\|_{op} $. Now write $h_{A^{-1}}(x) = h(A^{-1}x)$: it is
easily seen that
$$\Hess U_0 g(x) = \Hess  h_{A^{-1}}(x) = A^{-1} \Hess  h(A^{-1}x) A^{-1} .$$
It follows that
\begin{eqnarray*}
 M_3(U_0 g) &= &M_3 (h_{A^{-1}}) \\
              &=& \sup\limits_{x\neq y} \cfrac{\|\Hess  h_{A^{-1}}(x)
              - \Hess  h_{A^{-1}}(y)\|_{op}}{\|x-y\|} \\
              &=& \sup\limits_{x\neq y} \cfrac{\|A^{-1} \Hess  h(A^{-1}x) A^{-1}
              - A^{-1} \Hess  h(A^{-1}y) A^{-1}\|_{op}}{\|x-y\|} \\
              &\leq&  \|A^{-1}\|_{op}^2 \times \sup\limits_{x\neq y} \cfrac{\|\Hess
              h(A^{-1}x) - \Hess h(A^{-1}y)\|_{op}}{\|x-y\|}
              \times \cfrac{\|A^{-1}x - A^{-1}y\|}{\|A^{-1}x -A^{-1}y\|} \\
              &\leq&  \|A^{-1}\|_{op}^2 \times \sup\limits_{x\neq y} \cfrac{\|\Hess
              h(A^{-1}x) - \Hess h(A^{-1}y)\|_{op}}{\|A^{-1}x -A^{-1}y\|}
              \times \|A^{-1}\|_{op} \\
              &=& \|C^{-1}\|^{3/2}_{op} M_3(h).
\end{eqnarray*}
Since $M_3(h)\leq\frac{\sqrt{2\pi}}{4} M_2(g_A)$ (according to
\cite[Lemma 3]{chameck}), relation (\ref{ineq_M2}) follows immediately.
\end{proof}

%

\end{subsection}
\end{section}

\begin{section}{Upper bounds obtained by Malliavin-Stein methods}\label{Sect : MALLIAVINb}
We will now deduce one of the main findings of the present paper, namely Theorem \ref{ineq_2W}. This result allows to estimate the distance between the law of a vector of Poisson functionals and the law of a Gaussian vector, by combining the multi-dimensional Stein's Lemma \ref{Stein_multidim} with the algebra of the Malliavin operators. Note that, in this section, all Gaussian vectors are supposed to have a positive definite covariance matrix.


We start by proving a technical lemma, which is a crucial element in most of our proofs.
\begin{lemma} \label{lem1}
Fix $d\geq 1$ and consider a vector of random variables $F:=(F_1,\ldots,F_d)\subset L^2({\p})$. Assume that, for all
$1\leq i\leq d$, $F_i\in
{\rm dom}\, D$, and $\E[F_i]=0$. For all $\phi\in \C^2(\Rd)$ with bounded derivatives, one has that
$$D_z \phi (F_1,\ldots,F_d)=\smd \pxi \phi(F)(D_z F_i) +
\sum\limits_{i,j=1}^{d} R_{ij} (D_z F_i,D_z F_j), \,\, z\in Z, $$
 where the mappings
$R_{ij}$ satisfy
\begin{equation} \label{residue}
|R_{ij}(y_1,y_2)| \leq \cfrac{1}{2} \sup\limits_{x\in \Rd} \big|\cfrac{\partial^2}{\partial x_i \partial x_j} \phi(x)\big| \times |y_1 y_2|
\leq \cfrac{1}{2} \|\phi''\|_{\infty}|y_1 y_2| .
\end{equation}
\end{lemma}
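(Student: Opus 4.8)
The plan is to reduce the statement to an elementary second-order Taylor expansion, exploiting the fact that on the Poisson space the Malliavin derivative acts as the difference operator of Lemma \ref{L : diff}. First I would recall that, since $\Omega$ consists of the discrete measures described in Remark \ref{rmk1}, the add-one-point map $\omega\mapsto \omega+\delta_z$ is applied \emph{before} evaluating any functional; hence it commutes with the (deterministic, smooth) map $\phi$, yielding the pointwise identity
$$ D_z \phi(F) = \phi(F)_z - \phi(F) = \phi\big((F_1)_z,\ldots,(F_d)_z\big) - \phi(F_1,\ldots,F_d), \quad \text{a.e.-}\mu(dz). $$
Writing $(F_i)_z = F_i + D_z F_i$, the right-hand side is exactly the finite increment $\phi(F + D_z F) - \phi(F)$, where $D_z F := (D_z F_1,\ldots,D_z F_d)$.

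Next I would apply the second-order Taylor formula with exact remainder to $\phi$ along the segment joining $F$ to $F + D_z F$. Applying the one-dimensional Taylor expansion with Lagrange remainder to $t\mapsto \phi(F + t\,D_z F)$, there is a (random, $z$-dependent) point $\xi$ on this segment such that
$$ \phi(F + D_z F) - \phi(F) = \smd \pxi \phi(F)\,(D_z F_i) + \frac{1}{2}\sum_{i,j=1}^{d} \pxij \phi(\xi)\,(D_z F_i)(D_z F_j). $$
This isolates the announced first-order term $\smd \pxi \phi(F)(D_z F_i)$ and presents the remainder as a quadratic form in the increments $D_z F_i$.

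Finally I would define the residual mappings by $R_{ij}(y_1,y_2) := \tfrac{1}{2}\,\pxij\phi(\xi)\,y_1 y_2$, the point $\xi$ absorbing the dependence on $\omega$ and $z$, so that the remainder is precisely $\sum_{i,j=1}^{d} R_{ij}(D_z F_i, D_z F_j)$. The bound (\ref{residue}) then follows immediately, since $|R_{ij}(y_1,y_2)| \leq \tfrac{1}{2}\sup_{x\in\Rd}|\pxij\phi(x)|\,|y_1 y_2|$, and the symmetry of the Hessian of $\phi\in\C^2(\Rd)$ together with the definition of $\|\phi''\|_{\infty}$ gives $\sup_{x}|\pxij\phi(x)| \leq \|\phi''\|_{\infty}$ for every pair $(i,j)$. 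The only genuinely delicate point will be the measurability and integrability bookkeeping required to make these pointwise manipulations legitimate — in particular, that $\phi(F)\in{\rm dom}\,D$ so that $D_z\phi(F)$ is correctly represented by the difference operator. Since $\phi$ has bounded derivatives and each $F_i\in{\rm dom}\,D$, this follows from the explicit add-a-point representation, and the quadratic remainder is controlled in $L^2(\p\otimes\mu)$ by the inequality above; so no substantial obstacle arises beyond careful verification of these domain conditions.
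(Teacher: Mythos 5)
Your proposal is correct and follows essentially the same route as the paper: the authors likewise combine the difference-operator representation $D_z\phi(F)=\phi(F+D_zF)-\phi(F)$ from Lemma \ref{L : diff} with the multivariate Taylor theorem, reading off the first-order term and bounding the quadratic remainder exactly as you do. Your version merely makes the Lagrange form of the remainder and the use of the Hessian's symmetry explicit, which the paper leaves implicit.
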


\begin{proof}
By the multivariate Taylor theorem and Lemma \ref{L : diff},
\begin{eqnarray*}
D_z \phi (F_1,\ldots,F_d)
&=& \phi (F_1,\ldots,F_d)(\omega + \delta_z) - \phi (F_1,\ldots,F_d)(\omega) \\
&=& \phi (F_1(\omega + \delta_z),\ldots,F_d(\omega + \delta_z)) -
\phi (F_1(\omega),\ldots,F_d(\omega)) \\
&=& \smd \pxi \phi(F_1(\omega),\ldots,F_d(\omega)) (F_i(\omega + \delta_z) - F_i(\omega))
+ R\\
&=& \smd \pxi \phi(D_z F_i) + R,
\end{eqnarray*}
where the term $R$ represents the residue:
 $$R =R(D_z F_1,\ldots,D_z
F_d)= \sum\limits_{i,j=1}^{d} R_{ij} (D_z F_i,D_z F_j), $$
and the mapping $(y_1,y_2) \mapsto R_{ij}(y_1,y_2)$ verifies
 (\ref{residue}).
\end{proof}
\begin{remark}{\rm
Lemma \ref{lem1} is the Poisson counterpart of the multi-dimensional ``chain rule'' verified by the Malliavin derivative on a Gaussian space (see \cite{np,npr}). Notice that the term $R$ does not appear in the Gaussian framework.
}
\end{remark}

The following result uses the two Lemmas \ref{Stein_multidim} and \ref{lem1}, in order to compute explicit bounds on the distance between the laws of a vector of Poisson functionals and the law of a Gaussian vector.

\begin{theorem}[Malliavin-Stein inequalities on the Poisson space] \label{ineq_2W}
Fix $ d\geq 2 $ and let $ C=\{C(i,j): i,j= 1,\ldots,d  \} $ be a
$d\times d $ positive definite matrix. Suppose that $ X\sim
\Nd(0,C) $ and that $ F=(F_1,\ldots, F_d ) $ is a $ \Rd $-valued
random vector such that $ \E[F_i]=0 $ and $ F_i \in {\rm dom}\, D$, $i=1,\ldots,d$. Then,
\begin{eqnarray}
d_{2}(F,X) \leq \|C^{-1}\|_{op} \|C\|_{op}^{1/2} \sqrt{\sum_{i,j=1}^{d} \E[(C(i,j) - \langle  DF_i,-DL^{-1}F_j \rangle_{L^2(\mu)} )^2 ] }  \label{L1}\\
+ \cfrac{\sqrt{2\pi}}{8} \|C^{-1}\|^{3/2}_{op} \|C\|_{op} \int_Z \mu(dz)\E\left[\left(\smd|D_z F_i | \right)^2 \left(\smd|D_z L^{-1} F_i |  \right) \right]
 .\label{L2}
\end{eqnarray}
\end{theorem}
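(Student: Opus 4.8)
The plan is to fix a test function $g\in\C^2(\Rd)$ with $\|g\|_{Lip}\leq 1$ and $M_2(g)\leq 1$, and to bound $|\E[g(F)]-\E[g(X)]|$ by passing through the associated Stein equation. Since $C$ is positive definite and $g$ has bounded first and second derivatives, Lemma \ref{Stein_multidim}(2) ensures that $f:=U_0 g$ solves $g(x)-\E[g(X)] = \langle x,\nabla f(x)\rangle_{\Rd} - \langle C,\Hess f(x)\rangle_{H.S.}$; the Gaussian-convolution structure of $U_0 g$ moreover makes $f$ of class $\C^3$ with bounded second and third derivatives, quantitatively controlled by (\ref{ineq_lip}) and (\ref{ineq_M2}). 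Evaluating the Stein equation at $F$ and taking expectations gives
$$\E[g(F)]-\E[g(X)] = \smd \E[F_i\, \pxi f(F)] - \sum_{i,j=1}^{d} C(i,j)\,\E[\pxij f(F)].$$

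The core step is to rewrite the first sum via Malliavin calculus. Because each $F_i$ is centered, the Remark following Lemma \ref{deltaDelle} yields $F_i=\delta(-DL^{-1}F_i)$, so the duality (integration-by-parts) formula between $D$ and $\delta$ transforms $\E[F_i\,\pxi f(F)]$ into $\E[\langle D(\pxi f(F)),-DL^{-1}F_i\rangle_{L^2(\mu)}]$. To expand the Malliavin derivative I would apply the Poisson chain rule of Lemma \ref{lem1} to $\phi=\pxi f$ (which is $\C^2$ with bounded derivatives by the regularity of $U_0 g$), obtaining
$$D_z(\pxi f(F)) = \sum_{j=1}^{d}\pxij f(F)\,D_z F_j + \sum_{j,k=1}^{d} R^{(i)}_{jk}(D_z F_j,D_z F_k),$$
where $|R^{(i)}_{jk}(y_1,y_2)|\leq \tfrac12\sup_x|\partial^3 f/\partial x_i\partial x_j\partial x_k|\,|y_1 y_2|$ by (\ref{residue}). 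Substituting, pulling the factors $\pxij f(F)$ (which do not depend on $z$) out of the $L^2(\mu)$-inner product, and then relabeling $i\leftrightarrow j$ while using the symmetry of $\Hess f$, of $C$, and of the inner product, I arrive at
$$\E[g(F)]-\E[g(X)] = \E[\langle \Hess f(F),A\rangle_{H.S.}] + \mathcal{R},$$
with $A(i,j):=\langle DF_i,-DL^{-1}F_j\rangle_{L^2(\mu)}-C(i,j)$ and $\mathcal{R}:=\smd \E[\langle -DL^{-1}F_i,\sum_{j,k}R^{(i)}_{jk}(DF_j,DF_k)\rangle_{L^2(\mu)}]$ collecting the residues.

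For the main term I would use Cauchy--Schwarz for the Hilbert--Schmidt inner product, factor out $\sup_x\|\Hess f(x)\|_{H.S.}$, and apply Jensen's inequality to $\E\|A\|_{H.S.}$, giving $|\E[\langle \Hess f(F),A\rangle_{H.S.}]|\leq \sup_x\|\Hess f(x)\|_{H.S.}\,\sqrt{\sum_{i,j}\E[A(i,j)^2]}$; bounding $\sup_x\|\Hess f(x)\|_{H.S.}\leq\|C^{-1}\|_{op}\|C\|_{op}^{1/2}$ via (\ref{ineq_lip}) with $\|g\|_{Lip}\leq1$ produces exactly term (\ref{L1}). For $\mathcal{R}$ I would insert the pointwise residue bound, use $\sum_{j,k}|D_zF_j||D_zF_k|=(\smd|D_zF_i|)^2$, interchange $\int_Z\mu(dz)$ with $\E$ by Fubini, and invoke the elementary estimate $\|f'''\|_\infty\leq M_3(f)$ (each third partial is a directional derivative of an entry of $\Hess f$, and $|(\Hess f)_{ij}|\leq\|\Hess f\|_{op}$, so each entry is $M_3(f)$-Lipschitz), followed by (\ref{ineq_M2}) with $M_2(g)\leq1$; the constant $\tfrac12\cdot\tfrac{\sqrt{2\pi}}{4}=\tfrac{\sqrt{2\pi}}{8}$ then yields (\ref{L2}). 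Taking the supremum over admissible $g$ finishes the proof.

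The step I expect to be most delicate is not any individual inequality but the justification of the Malliavin manipulations: one must verify that $\pxi f(F)\in{\rm dom}\,D$ and $-DL^{-1}F_i\in{\rm dom}\,\delta$ so that the duality formula applies, and that all the expectations together with the Fubini interchange between $\int_Z\mu(dz)$ and $\E$ are legitimate — these are precisely guaranteed by the finiteness of the right-hand sides in (\ref{L1})--(\ref{L2}). The index bookkeeping in the residue, in particular tracking which third derivative of $f$ dominates $R^{(i)}_{jk}$ and carrying out the $i\leftrightarrow j$ relabeling correctly, is the other place where sign and labeling slips are easiest to make.
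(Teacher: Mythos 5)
Your proposal is correct and follows essentially the same route as the paper's own proof: reduce to the Stein equation via Lemma \ref{Stein_multidim}, convert $\E[F_i\,\partial_{x_i}U_0g(F)]$ by the $\delta$--$D$ duality using $F_i=\delta(-DL^{-1}F_i)$, expand with the Poisson chain rule of Lemma \ref{lem1}, and bound the main term and the residue by (\ref{ineq_lip}) and (\ref{ineq_M2}) respectively. Your explicit attention to the domain and Fubini justifications is a welcome addition, but the argument itself is the one in the paper.
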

\begin{proof}
If either one of the expectations in (\ref{L1}) and (\ref{L2}) are infinite, there is nothing to prove: we shall therefore work under the assumption that both expressions (\ref{L1})--(\ref{L2}) are finite. By the definition of the distance $d_2$, and by using an interpolation argument (identical to the one used at the beginning of the proof of Theorem 4 in \cite{chameck}), we need only show the following inequality:
\begin{eqnarray}  \nonumber
&& |\E[g(X)]-\E[g(F)]| \\
&& \leq A \|C^{-1}\|_{op} \|C\|_{op}^{1/2} \sqrt{\sum_{i,j=1}^{d} \E[(C(i,j) - \langle  DF_i,-DL^{-1}F_j \rangle_{L^2(\mu)} )^2 ] } \label{eq1} \\
&&+ \cfrac{\sqrt{2\pi}}{8}B\|C^{-1}\|^{3/2}_{op} \|C\|_{op} \int_Z
\mu(dz)\E\left[\left(\smd|D_z F_i | \right)^2 \left(\smd|D_z L^{-1} F_i |  \right) \right] \nonumber
\end{eqnarray}
for any $ g\in \C^\infty(\Rd)$ with first and second bounded derivatives, such that  $\|g\|_{Lip} \leq A $ and $ M_2(g)\leq B  $.
To prove (\ref{eq1}), we use Point (ii) in Lemma
\ref{Stein_multidim} to deduce that

\begin{eqnarray*}
&&|\E[g(X)]-\E[g(F)]| \\
&&= |\E[\langle C, \Hess U_0 g(F)\rangle_{H.S.} - \langle F, \nabla U_0 g(F) \rangle_{\Rd} ]| \\
&&= \left|\E\left[\sum_{i,j=1}^{d} C(i,j) \pxij U_0 g(F) - \sum_{k=1}^{d} F_k \pxk U_0 g(F)\right]\right|\\
&&= \left|\sum_{i,j=1}^{d} \E\left[ C(i,j) \pxij U_0 g(F)\right] + \sum_{k=1}^{d} \E\left[\delta(DL^{-1}F_k) \pxk U_0 g(F)\right]\right|\\
&&= \left|\sum_{i,j=1}^{d} \E\left[ C(i,j) \pxij U_0 g(F)\right] - \sum_{k=1}^{d}
\E\left[\left\langle D\left(\pxk U_0 g(F) \right), -DL^{-1}F_k\right\rangle_{L^2(\mu)}\right]  \right|.
\end{eqnarray*}
We write $ \pxk U_0 g(F):= \phi_k (F_1,\ldots,F_d) =
\phi_k (F) $. By using Lemma \ref{lem1}, we infer
\begin{eqnarray*}
D_z \phi_k (F_1,\ldots,F_d)
 &=& \smd \pxi \phi_k(F)(D_z F_i) + R_{k},
\end{eqnarray*}
with $R_{k} = \sum\limits_{i,j=1}^{d} R_{i,j,k} (D_z F_i,D_z F_j)
$, and
$$|R_{i,j,k}(y_1,y_2)| \leq \cfrac{1}{2} \sup\limits_{x\in \Rd} \left|\pxij \phi_k (x)\right|\times |y_1 y_2|. $$
It follows that
\begin{eqnarray*}
& & |\E[g(X)]-\E[g(F)]|\\
&=& \left|\sum_{i,j=1}^{d} \E\left[ C(i,j) \pxij U_0 g(F)\right] -
\sum_{i,k=1}^{d} \E\left[\cfrac{\partial^2}{\partial x_i \partial x_k} (U_0 g(F))\langle DF_i,
-DL^{-1}F_k\rangle_{L^2(\mu)} \right] \right.\\
& & \left. + \sum\limits_{i,j,k=1}^{d}  \E\left[\langle R_{i,j,k}(DF_i,DF_j),-DL^{-1}F_k\rangle_{L^2(\mu)}\right] \right| \\
&\leq& \sqrt{\E[\|\Hess U_0 g(F)\|^2_{H.S.}]} \times \sqrt{
\sum_{i,j=1}^d\E\left[\left(C(i,j)- \langle DF_i, -DL^{-1}F_j\rangle_{L^2(\mu)}\right)^2\right]} + |R_2 |,
\end{eqnarray*}
where
$$R_2 = \sum\limits_{i,j,k=1}^{d} \E[\langle R_{i,j,k}(DF_i,DF_j),-DL^{-1}F_k\rangle_{L^2(\mu)}] .$$
Note that (\ref{ineq_lip}) implies that $\|\Hess U_0 g(F) \|_{H.S.} \leq
    \|C^{-1}\|_{op}\, \|C\|_{op}^{1/2} \|g\|_{Lip}$.
By using (\ref{ineq_M2}) and the fact $\|g'''\|_{\infty} \leq M_3(g) $, we have
\begin{eqnarray*}
& & |R_{i,j,k} (y_1,y_2)| \leq \cfrac{1}{2} \sup\limits_{x\in \Rd}
\left|\cfrac{\partial^3}{\partial x_i \partial
x_j \partial x_k} U_0(g(y))\right| \times |y_1 y_2| \\
&\leq&
\cfrac{\sqrt{2\pi}}{8} M_2(g) \|C^{-1}\|^{3/2}_{op} \|C\|_{op}
\times |y_1 y_2| \leq
 \cfrac{\sqrt{2\pi}}{8} B\|C^{-1}\|^{3/2}_{op} \|C\|_{op} \times |y_1 y_2|,
\end{eqnarray*}
from which we deduce the desired conclusion.
\end{proof}
\end{section}

\medskip

Now recall that, for a random variable $F=\hat{N}(h)=I_1(h)$ in the first Wiener chaos of $\hat{N}$, one has that $DF=h$ and $L^{-1}F = -F$. By virtue of Remark \ref{RM : CVinLAW}, we immediately deduce the following consequence of Theorem \ref{ineq_2W}.

\begin{corollary}\label{Cor1} For a fixed $d\geq 2$, let $X\sim\mathcal{N}_d(0,C)$, with $C$ positive definite, and let $$F_n=(F_{n,1},...,F_{n,d}) = (\hat{N}(h_{n,1}),...,\hat{N}(h_{n,d})),\,\,n\geq 1,$$ be a collection of $d$-dimensional random vectors living in the first Wiener chaos of $\hat{N}$. Call $K_n$ the covariance matrix of $F_n$, that is: $K_n(i,j) = \mathbb{E}[\hat{N}(h_{n,i})\hat{N}(h_{n,j})] =\langle h_{n,i}, h_{n,j}\rangle_{L^2(\mu)}$. Then,
$$ d_2(F_n,X) \leq \|C^{-1}\|_{op} \|C\|_{op}^{1/2}\, \|C-K_n\|_{H.S.} + \frac{d^2\sqrt{2\pi}}{8}\|C^{-1}\|^{3/2}_{op} \|C\|_{op}\sum_{i=1}^d\int_Z |h_{n,i}(z)|^3\mu(dz). $$
In particular, if
\begin{equation}\label{Panocha}
 K_n(i,j) \rightarrow C(i,j) \,\,\,\, \mbox{and} \,\,\,\, \int_Z |h_{n,i}(z)|^3\mu(dz)\rightarrow 0
\end{equation}
(as $n\rightarrow\infty$ and for every $i,j=1,...,d$), then $d_2(F_n,X) \rightarrow 0$ and $F_n$ converges in distribution to $X$.
\end{corollary}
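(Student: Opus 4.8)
The plan is to specialize Theorem \ref{ineq_2W} to the sequence of first-chaos vectors $F_n$ and then estimate the two resulting terms (\ref{L1})--(\ref{L2}) explicitly. The crucial simplification comes from the two facts recalled just before the statement: for a first-chaos random variable $F_{n,i} = \hat{N}(h_{n,i}) = I_1(h_{n,i})$, one has $DF_{n,i} = h_{n,i}$ (a deterministic kernel) and $L^{-1}F_{n,i} = -F_{n,i}$, so that $-DL^{-1}F_{n,i} = DF_{n,i} = h_{n,i}$ as well. In particular every Malliavin derivative appearing in Theorem \ref{ineq_2W} is the deterministic function $h_{n,i}$, which means all the expectations collapse into purely analytic quantities.

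First I would treat the term (\ref{L1}). Since $DF_{n,i}$ and $-DL^{-1}F_{n,j}$ both equal the deterministic kernels $h_{n,i}$ and $h_{n,j}$, the inner product $\langle DF_{n,i}, -DL^{-1}F_{n,j}\rangle_{L^2(\mu)}$ is the nonrandom number $\langle h_{n,i}, h_{n,j}\rangle_{L^2(\mu)} = K_n(i,j)$. Hence $C(i,j) - \langle DF_{n,i}, -DL^{-1}F_{n,j}\rangle_{L^2(\mu)} = C(i,j) - K_n(i,j)$ is deterministic, the expectation disappears, and
$$\sqrt{\sum_{i,j=1}^d \E\big[(C(i,j) - \langle DF_{n,i}, -DL^{-1}F_{n,j}\rangle_{L^2(\mu)})^2\big]} = \sqrt{\sum_{i,j=1}^d (C(i,j)-K_n(i,j))^2} = \|C-K_n\|_{H.S.},$$
by the definition of the Hilbert--Schmidt norm. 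This reproduces the first summand of the asserted bound exactly.

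Next I would bound the term (\ref{L2}). Substituting $D_z F_{n,i} = h_{n,i}(z)$ and $D_z L^{-1}F_{n,i} = -h_{n,i}(z)$ into the integrand, the expectation is again over a deterministic quantity, so (\ref{L2}) becomes
$$\frac{\sqrt{2\pi}}{8}\|C^{-1}\|^{3/2}_{op}\|C\|_{op}\int_Z \Big(\smd |h_{n,i}(z)|\Big)^2\Big(\smd |h_{n,i}(z)|\Big)\,\mu(dz) = \frac{\sqrt{2\pi}}{8}\|C^{-1}\|^{3/2}_{op}\|C\|_{op}\int_Z\Big(\smd |h_{n,i}(z)|\Big)^3\mu(dz).$$
The one genuine estimate to carry out is to pass from this power of a sum to the sum of third powers $\sum_{i=1}^d \int_Z |h_{n,i}(z)|^3\mu(dz)$ appearing in the statement, at the cost of the combinatorial factor $d^2$. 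I expect this to follow from the elementary inequality $(\sum_{i=1}^d a_i)^3 \leq d^2 \sum_{i=1}^d a_i^3$ for nonnegative reals (a consequence of Jensen's inequality or power-mean applied to the uniform average, since $(\frac{1}{d}\sum a_i)^3 \leq \frac{1}{d}\sum a_i^3$), applied pointwise with $a_i = |h_{n,i}(z)|$ and then integrated in $\mu(dz)$. This yields precisely the second summand with its constant $\frac{d^2\sqrt{2\pi}}{8}$.

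The main conclusion then assembles immediately: combining the two displays gives the stated bound on $d_2(F_n,X)$. Finally, under the hypotheses (\ref{Panocha}), the convergence $K_n(i,j)\to C(i,j)$ for all $i,j$ forces $\|C-K_n\|_{H.S.}\to 0$, while $\int_Z |h_{n,i}(z)|^3\mu(dz)\to 0$ for each $i$ kills the second term; since the matrix norms $\|C^{-1}\|_{op}$ and $\|C\|_{op}$ are fixed constants depending only on $C$, the whole bound tends to $0$, so $d_2(F_n,X)\to 0$. Convergence in distribution then follows from Remark \ref{RM : CVinLAW}, which records that $d_2$-convergence implies convergence in law. There is no real obstacle here: the only nonroutine point is the choice of the elementary convexity inequality giving the factor $d^2$, and one should double-check that the constant it produces matches the $d^2$ claimed (as opposed to a sharper or weaker constant), but everything else is a direct substitution of deterministic kernels into Theorem \ref{ineq_2W}.
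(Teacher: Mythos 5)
Your proposal is correct and is exactly the argument the paper intends: the corollary is stated as an immediate specialization of Theorem \ref{ineq_2W} using $DF_{n,i}=h_{n,i}$ and $-DL^{-1}F_{n,i}=h_{n,i}$, with the first term collapsing to $\|C-K_n\|_{H.S.}$ and the second handled by the pointwise convexity bound $\left(\sum_{i=1}^d a_i\right)^3\leq d^2\sum_{i=1}^d a_i^3$, which does yield precisely the constant $\frac{d^2\sqrt{2\pi}}{8}$. No gaps.
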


\begin{remark} \label{rmk2}
{\rm
\begin{enumerate}
\item The conclusion of Corollary \ref{Cor1} is by no means trivial. Indeed, apart from the requirement on the asymptotic behavior of covariances, the statement of Corollary \ref{Cor1} does not contain {\it any} assumption on the joint distribution of the components of the random vectors $F_n$. We will see in Section 5 that analogous results can be deduced for vectors of multiple integrals of arbitrary orders. We will also see in Corollary \ref{Cor2} that one can relax the assumption that $C$ is positive definite.
\item The inequality appearing in the statement of Corollary \ref{Cor1} should also be compared with the following result, proved in \cite{npr}, yielding a bound on the Wasserstein distance between the laws of two Gaussian vectors of dimension $d\geq 2$. Let $Y\sim \mathcal{N}_d(0, K)$ and $X\sim \mathcal{N}_d(0, C)$,
where $K$ and $C$ are two positive definite covariance matrices.
Then,
$
d_{W}(Y,X)\leq  Q(C,K)\times \|C-K\|_{H.S.},
$
where
$$
Q(C,K) :=\min\{\| C^{-1}\|_{op} \,\, \| C\|_{op}^{1/2}, \|
K^{-1}\|_{op} \,\, \| K\|_{op}^{1/2}\},
$$
and $d_{W}$ denotes the Wasserstein distance between the laws of random variables with values in $\Rd$.
\end{enumerate}
}
\end{remark}

\begin{section}{Upper bounds obtained by interpolation methods}\label{SEC : INTERPLT}
\subsection{Main estimates}
In this section, we deduce an alternate upper bound (similar to the ones proved in the previous section) by adopting an approach based on interpolations. We first prove a result involving Malliavin operators.

\begin{lemma} \label{lem2}
Fix $d\geq 1$. Consider $d+1$ random variables $F_i\in L^2(\p)$, $0\leq
i\leq d$, such that $F_i\in {\rm dom}\, D$ and $\E[F_i]=0$. For all $g\in
\C^2(\Rd)$ with bounded derivatives,
$$\E[g(F_1,\ldots,F_d)F_0] \!=\! \E\!\left[\smd \pxi g(F_1,\ldots,F_d)\langle DF_i,-DL^{-1}F_0\rangle_{L^2(\mu)}\right]\! +\! \E\left[\langle R,-DL^{-1}F_0\rangle_{L^2(\mu)}\right]\!,
 $$
 where
\begin{eqnarray}
&& |\E[\langle R,-DL^{-1}F_0\rangle_{L^2(\mu)}]| \label{domus}\\
&&\leq \cfrac{1}{2} \max_{i,j} \sup\limits_{x\in \Rd}
\left| \pxij g(x)\right| \times \int_Z
 \mu(dz) \E\left[\left(\sum_{k=1}^d|D_z F_k|\right)^2 |D_z L^{-1} F_0|\right]. \notag
 \end{eqnarray}
\end{lemma}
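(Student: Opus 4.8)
The plan is to mimic closely the argument used in the proof of Theorem \ref{ineq_2W}, exploiting the representation of $F_0$ as a divergence together with the integration-by-parts duality and the chain rule of Lemma \ref{lem1}. As in that proof, if the bounding expression on the right-hand side of (\ref{domus}) is infinite there is nothing to prove, so I would work throughout under the assumption that the relevant expectation is finite, which is what legitimizes the manipulations below.

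First, since $\E[F_0]=0$ and $F_0\in{\rm dom}\,D$, the Remark following Lemma \ref{deltaDelle} gives $F_0 = \delta(-DL^{-1}F_0)$. Substituting this into the left-hand side and invoking the duality between $D$ and $\delta$, I would write
$$\E[g(F_1,\ldots,F_d)F_0] = \E[g(F)\,\delta(-DL^{-1}F_0)] = \E\left[\langle Dg(F),\,-DL^{-1}F_0\rangle_{L^2(\mu)}\right].$$
Next, I would apply the chain rule of Lemma \ref{lem1} to expand the Malliavin derivative,
$$D_z g(F) = \smd \pxi g(F)(D_z F_i) + R, \qquad R = \sum_{i,j=1}^{d} R_{ij}(D_z F_i, D_z F_j),$$
where the mappings $R_{ij}$ satisfy $|R_{ij}(y_1,y_2)| \leq \frac{1}{2}\sup_{x\in\Rd}|\pxij g(x)|\,|y_1 y_2|$. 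Plugging this decomposition into the inner product and using linearity yields precisely the claimed identity, splitting into the main term $\E[\smd \pxi g(F)\langle DF_i,-DL^{-1}F_0\rangle_{L^2(\mu)}]$ and the residual term $\E[\langle R,-DL^{-1}F_0\rangle_{L^2(\mu)}]$.

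For the bound (\ref{domus}), I would estimate the residue pointwise in $z$: summing the individual bounds on the $R_{ij}$ gives
$$|R| \leq \cfrac{1}{2}\max_{i,j}\sup_{x\in\Rd}\left|\pxij g(x)\right| \sum_{i,j=1}^{d} |D_z F_i|\,|D_z F_j| = \cfrac{1}{2}\max_{i,j}\sup_{x\in\Rd}\left|\pxij g(x)\right|\left(\sum_{k=1}^{d}|D_z F_k|\right)^2.$$
Then, by the triangle inequality together with a Fubini interchange of $\E$ and $\int_Z\mu(dz)$, one has $|\E[\langle R,-DL^{-1}F_0\rangle_{L^2(\mu)}]| \leq \int_Z\mu(dz)\,\E[\,|R|\,|D_z L^{-1}F_0|\,]$, and substituting the pointwise bound above produces exactly the right-hand side of (\ref{domus}).

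The step I expect to require the most care is the justification of the integration-by-parts formula $\E[g(F)\delta(u)]=\E[\langle Dg(F),u\rangle_{L^2(\mu)}]$ with $u=-DL^{-1}F_0$: one must check that $g(F)\in{\rm dom}\,D$, which follows from $g$ being $\C^2$ with bounded derivatives and each $F_i\in{\rm dom}\,D$ via the difference-operator representation of $D$ in Lemma \ref{L : diff}, and that $u\in{\rm dom}\,\delta$, which holds because $L^{-1}F_0\in{\rm dom}\,L$ forces $DL^{-1}F_0\in{\rm dom}\,\delta$ by Lemma \ref{deltaDelle}. As in Theorem \ref{ineq_2W}, the integrability needed to apply both the duality and the Fubini argument is guaranteed by the standing assumption that the bounding expression in (\ref{domus}) is finite.
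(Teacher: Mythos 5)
Your proposal is correct and follows essentially the same route as the paper: writing $F_0=LL^{-1}F_0=-\delta(DL^{-1}F_0)$, applying the integration-by-parts duality between $D$ and $\delta$, and then expanding $Dg(F)$ via the chain rule of Lemma \ref{lem1} to isolate the residue, whose bound follows from the pointwise estimates on the $R_{ij}$. The only difference is that you spell out the residue estimate and the domain/integrability checks, which the paper leaves implicit.
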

\begin{proof}By applying Lemma \ref{lem1},
 \begin{eqnarray*}
 & & \E[g(F_1,\ldots,F_d)F_0] \\
 &=& \E[(LL^{-1}F_0)g(F_1,\ldots,F_d)] \\
 &=& -\E[\delta(DL^{-1}F_0)g(F_1,\ldots,F_d)] \\
 &=& \E[\langle Dg(F_1,\ldots,F_d),-DL^{-1}F_0\rangle_{L^2(\mu)}] \\
 &=& \E\left[\smd \pxi g(F_1,\ldots,F_d)\langle DF_i,-DL^{-1}F_0\rangle_{L^2(\mu)}\right] + \E[\langle R,-DL^{-1}F_0\rangle_{L^2(\mu)}],
 \end{eqnarray*}
and $\E[\langle R,-DL^{-1}F_0\rangle_{L^2(\mu)}]$ verifies the inequality (\ref{domus}).
\end{proof}

\medskip

As anticipated, we will now use an interpolation technique inspired by the so-called ``smart path method'', which is sometimes used in the framework of approximation results for spin glasses (see \cite{talag}). Note that the computations developed below are very close to the ones used in the proof of Theorem 7.2 in \cite{npr2}.

\begin{theorem} \label{ineq_tala}
Fix $ d\geq 1 $ and let $ C=\{C(i,j): i,j= 1,\ldots,d  \} $ be a
$d\times d $ covariance matrix (not necessarily positive definite). Suppose that $ X=(X_1,...,X_d)\sim
\Nd(0,C) $ and that $ F=(F_1,\ldots, F_d ) $ is a $ \Rd $-valued
random vector such that $ \E[F_i]=0 $ and $ F_i \in {\rm dom}\, D$, $i=1,\ldots,d$. Then,
\begin{eqnarray} \label{ineq_tala2}
d_{3}(F,X) &\leq& \cfrac{1}{2} \sqrt{\sum_{i,j=1}^{d} \E[(C(i,j) - \langle  DF_i,-DL^{-1}F_j \rangle_{L^2(\mu)} )^2 ] } \label{unotrino}
 \\
& & + \cfrac{1}{4} \int_Z
 \mu(dz) \E\left[\left(\smd|D_z F_i|\right)^2 \left(\smd |D_z L^{-1} F_i|\right)\right] .\label{unotrino2}
\end{eqnarray}
\end{theorem}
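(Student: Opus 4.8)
The plan is to bound $|\E[g(F)]-\E[g(X)]|$ for an arbitrary $g$ in the class defining $d_3$, i.e. $g\in\C^3(\Rd)$ with $\|g''\|_\infty\le 1$ and $\|g'''\|_\infty\le 1$. First I would realize $X\sim\Nd(0,C)$ on an enlarged probability space, independent of $\hat N$, and introduce the ``smart path''
\[ \Psi(t):=\E[g(\sqrt t\,F+\sqrt{1-t}\,X)],\qquad t\in[0,1], \]
so that $\E[g(F)]-\E[g(X)]=\Psi(1)-\Psi(0)=\int_0^1\Psi'(t)\,dt$. Writing $Z_t=\sqrt t\,F+\sqrt{1-t}\,X$ and differentiating under the expectation (legitimate since $g$ has bounded second and third derivatives and $F,X\in L^2$, so $\nabla g(Z_t)\cdot\dot Z_t$ is a product of $L^2$ factors), one gets
\[ \Psi'(t)=\frac{1}{2\sqrt t}\smd\E[\partial_i g(Z_t)F_i]-\frac{1}{2\sqrt{1-t}}\smd\E[\partial_i g(Z_t)X_i]. \]
The whole point is to remove the two singular weights $1/\sqrt t$ and $1/\sqrt{1-t}$ by performing one integration by parts in each term.

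For the Gaussian summand I would condition on $\hat N$ and use the classical Gaussian integration-by-parts formula for $X\sim\Nd(0,C)$: since $\partial_{x_j}[\partial_i g(\sqrt t\,F+\sqrt{1-t}\,x)]=\sqrt{1-t}\,\partial_{ij}g(Z_t)$, this turns $\E[\partial_i g(Z_t)X_i]$ into $\sqrt{1-t}\sum_j C(i,j)\E[\partial_{ij}g(Z_t)]$, and the $\sqrt{1-t}$ cancels the weight, leaving $-\tfrac12\sum_{i,j}C(i,j)\E[\partial_{ij}g(Z_t)]$. For the Poisson summand I would condition on $X$ and apply Lemma \ref{lem2} with $F_0=F_i$ and test function $\phi_i=\partial_i g(\sqrt t\,\cdot+\sqrt{1-t}\,X)$, whose first and second derivatives are bounded by $\sqrt t\,\|g''\|_\infty$ and $t\,\|g'''\|_\infty$ respectively (so $\phi_i$ meets the hypotheses of Lemma \ref{lem2}). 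Because $\partial_k\phi_i(F)=\sqrt t\,\partial_{ik}g(Z_t)$, Lemma \ref{lem2} yields $\sqrt t\sum_k\E[\partial_{ik}g(Z_t)\langle DF_k,-DL^{-1}F_i\rangle_{L^2(\mu)}]$ plus a residue controlled by (\ref{domus}); dividing by $2\sqrt t$ again cancels the weight. Collecting the two computations and relabelling indices (using $C=C^{T}$), I obtain
\[ \Psi'(t)=\frac12\sum_{i,j=1}^{d}\E\big[\partial_{ij}g(Z_t)\big(\langle DF_i,-DL^{-1}F_j\rangle_{L^2(\mu)}-C(i,j)\big)\big]+\mathcal R(t), \]
where $\mathcal R(t)$ gathers the residues.

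It then remains to estimate the two pieces and integrate in $t$. The main term I would read as the Hilbert--Schmidt pairing of $\Hess g(Z_t)$ with the discrepancy matrix $S(i,j)=C(i,j)-\langle DF_i,-DL^{-1}F_j\rangle_{L^2(\mu)}$; bounding the entries of $\Hess g(Z_t)$ by $\|g''\|_\infty\le1$ and applying the Cauchy--Schwarz inequality (over the pairs $(i,j)$ and over $\Omega$) recovers $\sqrt{\sum_{i,j}\E[S(i,j)^2]}$, i.e. the first summand (\ref{unotrino}), which carries no $t$-dependence. For the residue, the estimate (\ref{domus}) applied to $\phi_i$, together with $\sup|\partial_{jk}\phi_i|\le t\,\|g'''\|_\infty\le t$ and the factor $1/(2\sqrt t)$, gives
\[ |\mathcal R(t)|\le\frac{\sqrt t}{4}\int_Z\mu(dz)\,\E\Big[\Big(\smd|D_z F_i|\Big)^2\Big(\smd|D_z L^{-1}F_i|\Big)\Big]; \]
bounding $\sqrt t\le1$ and integrating over $t\in[0,1]$ produces the second summand (\ref{unotrino2}). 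Adding the two contributions and integrating $\Psi'$ over $[0,1]$ finishes the argument.

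The step I expect to be the main obstacle is the simultaneous, ``conditional'' use of the two integration-by-parts formulas --- the Gaussian one conditionally on $\hat N$ and the Malliavin one (Lemma \ref{lem2}) conditionally on $X$ --- and, above all, the bookkeeping of the $\sqrt t$ and $\sqrt{1-t}$ prefactors, which must cancel the singular weights $1/(2\sqrt t)$ and $1/(2\sqrt{1-t})$ exactly. Alongside this, one has to justify differentiating $\Psi$ under the expectation and the Fubini interchange between the $\mu(dz)$-integral and the expectation in the residue; once these integrability points are settled, the remaining estimates are routine applications of Cauchy--Schwarz.
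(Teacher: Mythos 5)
Your proposal follows essentially the same route as the paper's proof: the same smart-path interpolant, Gaussian integration by parts conditionally on the Poisson functionals for one summand, Lemma \ref{lem2} conditionally on $X$ for the other, exact cancellation of the $1/\sqrt{t}$ and $1/\sqrt{1-t}$ weights, and the residue controlled via (\ref{domus}) with the factor $t\,\|g'''\|_{\infty}$ coming from the chain rule. The only differences are cosmetic (you parametrize the path as $\sqrt{t}F+\sqrt{1-t}X$ rather than $\sqrt{1-t}F+\sqrt{t}X$, and you integrate $\Psi'$ over $[0,1]$ where the paper bounds by $\sup_t|\Psi'(t)|$), and your final Cauchy--Schwarz step is exactly the paper's.
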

\begin{proof}
We will work under the assumption that both expectations in (\ref{unotrino}) and (\ref{unotrino2}) are finite. By the definition of distance $d_3$, we need only to show the following inequality:
\begin{eqnarray*}
|\E[\phi(X)]-\E[\phi(F)]| &\leq& \cfrac{1}{2} \|\phi'' \|_{\infty}\sum\limits_{i,j=1}^{d} \E[ |C(i,j) - \langle DF_i, -DL^{-1}F_j \rangle_{L^2(\mu)}| ] \\
& & + \cfrac{1}{4}\|\phi'''\|_{\infty} \int_Z
 \mu(dz) \E\left[\left(\smd|D_z F_i|\right)^2 \left(\smd |D_z L^{-1} F_i|\right)\right]
\end{eqnarray*}
for any $ \phi\in \C^3(\Rd)$ with second and third bounded derivatives. Without loss of generality, we may assume that $F$ and $X$ are independent. For $t \in [0,1] $, we set
$$\Psi(t) = \E[ \phi(\sqrt{1-t}(F_1,\ldots, F_d   ) + \sqrt{t} X )]$$
We have immediately
$$|\Psi(1)-\Psi(0) | \leq \sup_{t\in (0,1)} |\Psi'(t)| .$$
Indeed, due to the assumptions on $\phi$, the function $t\mapsto \Psi(t) $ is differentiable on $(0,1)$, and one has also
\begin{eqnarray*}
 \Psi '(t)&=& \sum\limits_{i=1}^{d} \E\left[\pxi \phi\left( \sqrt{1-t}(F_1,\ldots, F_d)
 + \sqrt{t} X \right ) \left( \cfrac{1}{2\sqrt{t}}X_i - \cfrac{1}{2\sqrt{1-t}} F_i \right)\right] \\
 &:=&\frac{1}{2\sqrt{t}} \,\mathfrak{A} - \frac{1}{2\sqrt{1-t}} \,\mathfrak{B}.
\end{eqnarray*}
On the one hand, we have
\begin{eqnarray*}
\mathfrak{A} &=& \sum_{i=1}^d\E\left[\pxi \phi( \sqrt{1-t}(F_1,\ldots, F_d)
 + \sqrt{t} X  ) X_i \right] \\
 &=&\sum_{i=1}^d \E\left[\E\left[\pxi \phi( \sqrt{1-t}a + \sqrt{t} X  ) X_i\right]_{|a=(F_1,
 \ldots, F_d)}  \right] \\
 &=& \sqrt{t} \sum\limits_{i,j=1}^{d} C(i,j)
 \E\left[\E\left[\pxij \phi( \sqrt{1-t}a + \sqrt{t} X  )\right]_{|a=(F_1,
 \ldots, F_d)}  \right] \\  
 &=& \sqrt{t} \sum\limits_{i,j=1}^{d} C(i,j)
 \E\left[\pxij \phi( \sqrt{1-t}(F_1,\ldots, F_d)
   + \sqrt{t} X  ) \right].
\end{eqnarray*}
On the other hand,
\begin{eqnarray*}
\mathfrak{B} &=&\sum_{i=1}^d \E\left[\pxi \phi( \sqrt{1-t}(F_1,\ldots, F_d)
 + \sqrt{t} X  ) F_i \right] \\
&=&\sum_{i=1}^d \E\left[ \E \left[\pxi \phi( \sqrt{1-t}(F_1,\ldots, F_d) + \sqrt{t} b  ) F_i \right]_{|b=X}  \right].
\end{eqnarray*}
We now write $\phi^{t,b}_i(\cdot)$ to indicate the function on $\Rd$ defined by
$$ \phi^{t,b}_i(F_1,\ldots, F_d) = \pxi \phi( \sqrt{1-t}(F_1,\ldots, F_d) + \sqrt{t} b ) $$
By using Lemma \ref{lem2}, we deduce that
\begin{eqnarray*}
&&\E[\phi^{t,b}_i(F_1,\ldots,F_d)F_i]\\
&&= \E\left[\sum_{j=1}^d \pxj \phi^{t,b}_i(F_1,\ldots,F_d)
\langle DF_j,-DL^{-1}F_i\rangle_{L^2(\mu)}\right] + \E\left[\langle R^i_b,-DL^{-1}F_i\rangle_{L^2(\mu)}\right],
 \end{eqnarray*}
 where $R^i_b$ is a residue verifying
\begin{eqnarray} \label{ineq_res}
&&|\E[\langle R^i_b,-DL^{-1}F_i\rangle_{L^2(\mu)}]|\\
&&\leq \cfrac{1}{2}
\left(\max_{k,l} \sup\limits_{x\in \Rd} \left|\frac{\partial}{\partial x_k \partial x_l} \phi^{t,b}_i (x)\right|\right) \int_Z
 \mu(dz) \E\left[\left(\sum_{j=1}^d|D_z F_j|\right)^2 |D_z L^{-1} F_i|\right].\notag
\end{eqnarray}
 Thus,
\begin{eqnarray*}
\mathfrak{B} &=& \sqrt{1-t} \sum\limits_{i,j=1}^{d} \E\left[ \E \left[\pxij \phi(
\sqrt{1-t}(F_1,\ldots, F_d) + \sqrt{t} b )
 \langle DF_i, -DL^{-1}F_j \rangle_{L^2(\mu)}\right]_{|b=X}  \right]\\
 & & + \sum_{i=1}^d\E\left[ \E \left[\langle  R^i_b,-DL^{-1}F_i  \rangle_{L^2(\mu)}\right]_{|b=X}  \right]\\
 &=& \sqrt{1-t} \sum\limits_{i,j=1}^{d}
\E\left[ \pxij \phi( \sqrt{1-t}(F_1,\ldots, F_d) + \sqrt{t} X)
 \langle DF_i, -DL^{-1}F_j \rangle_{L^2(\mu)} \right]\\
 & & +\sum_{i=1}^d \E\left[ \E \left[\langle  R^i_b,-DL^{-1}F_i  \rangle_{L^2(\mu)}\right]_{|b=X}  \right] .
\end{eqnarray*}
Putting the estimates on $\mathfrak{A}$ and $\mathfrak{B}$ together, we infer
\begin{eqnarray*}
 \Psi '(t)&=& \cfrac{1}{2} \sum\limits_{i,j=1}^{d}
\E\left[ \pxij \phi( \sqrt{1-t}(F_1,\ldots, F_d) + \sqrt{t} X)
(C(i,j) - \langle DF_i, -DL^{-1}F_j \rangle_{L^2(\mu)}) \right] \\
& & - \cfrac{1}{2\sqrt{1-t}}\sum_{i=1}^d \E\left[ \E \left[\langle  R^i_b,-DL^{-1}F_i  \rangle_{L^2(\mu)}\right]_{|b=X}  \right].
\end{eqnarray*}
We  notice that
\begin{eqnarray*}
\left| \pxij \phi( \sqrt{1-t}(F_1,\ldots, F_d) + \sqrt{t} b)\right| \leq \|\phi''\|_{\infty},
 \end{eqnarray*}
and also
\begin{eqnarray*}
\left|\cfrac{\partial^2}{\partial x_k \partial x_l} \phi^{t,b}_i (F_1,\ldots, F_d)\right| & =& (1-t)\times \left|\cfrac{\partial^3}{\partial x_i \partial x_k\partial x_l} \phi( \sqrt{1-t}(F_1,\ldots, F_d) + \sqrt{t} b)\right|
\\ &\leq &(1-t) \|\phi''' \|_{\infty}.
\end{eqnarray*}
To conclude, we can apply inequality (\ref{ineq_res}) and deduce the estimates
\begin{eqnarray*}
& &    |\E[\phi(X)]-\E[\phi(F)]|  \\
&\leq& \sup_{t\in (0,1)} |\Psi '(t)|  \\
&\leq& \cfrac{1}{2} \|\phi'' \|_{\infty}\sum\limits_{i,j=1}^{d} \E[ |C(i,j) - \langle DF_i, -DL^{-1}F_j \rangle_{L^2(\mu)}| ] \\
& & + \cfrac{1-t}{4\sqrt{1-t}}\|\phi'''\|_{\infty} \int_Z
 \mu(dz) \E\left[\left(\smd|D_z F_i|\right)^2 \left(\smd |D_z L^{-1} F_i|\right)\right]\\
&\leq& \cfrac{1}{2} \|\phi'' \|_{\infty} \sqrt{\sum_{i,j=1}^{d} \E[(C(i,j) - \langle  DF_i,-DL^{-1}F_j \rangle_{L^2(\mu)} )^2 ] } \\
& & + \cfrac{1}{4}\|\phi'''\|_{\infty} \int_z
 \mu(dz) \E\left[\left(\smd|D_z F_i|\right)^2 \left(\smd |D_z L^{-1} F_i|\right)\right],
\end{eqnarray*}
thus concluding the proof.
\end{proof}

\smallskip

The following statement is a direct consequence of Theorem \ref{ineq_tala}, as well as a natural generalization of Corollary \ref{Cor1}.

\begin{corollary}\label{Cor2}
For a fixed $d\geq 2$, let $X\sim\mathcal{N}_d(0,C)$, with $C$ a generic covariance matrix. Let $$F_n=(F_{n,1},...,F_{n,d}) = (\hat{N}(h_{n,1}),...,\hat{N}(h_{n,d})),\,\,n\geq 1,$$ be a collection of $d$-dimensional random vectors in the first Wiener chaos of $\hat{N}$, and denote by $K_n$ the covariance matrix of $F_n$. Then,
$$ d_3(F_n,X) \leq \frac12 \|C-K_n\|_{H.S.} + \frac{d^2}{4}\sum_{i=1}^d\int_Z |h_{n,i}(z)|^3\mu(dz). $$
In particular, if relation (\ref{Panocha}) is verified for every $i,j=1,...,d$ (as $n\rightarrow\infty$), then $d_3(F_n,X) \rightarrow 0$ and $F_n$ converges in distribution to $X$.
\end{corollary}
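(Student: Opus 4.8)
The plan is to apply Theorem \ref{ineq_tala} directly to the vector $F_n$ and then evaluate the two terms on its right-hand side using the special algebra of first-chaos random variables. The key input is the identity recorded just before Corollary \ref{Cor1}: for $F=\hat{N}(h)=I_1(h)$ one has $DF=h$ and $L^{-1}F=-F$, so that $-DL^{-1}F=DF=h$. In particular, each component $F_{n,i}=\hat{N}(h_{n,i})$ is centered (by Proposition \ref{P : MWIone}) and lies in ${\rm dom}\,D$, so the hypotheses of Theorem \ref{ineq_tala} are satisfied. Crucially, that theorem allows $C$ to be an arbitrary covariance matrix, not necessarily positive definite, which is exactly the extra generality claimed here relative to Corollary \ref{Cor1}.

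First I would treat the term (\ref{unotrino}). Since $D_z F_{n,i}=h_{n,i}(z)$ and $-D_z L^{-1}F_{n,j}=h_{n,j}(z)$ are deterministic, the random inner product is in fact constant: $\langle DF_{n,i},-DL^{-1}F_{n,j}\rangle_{L^2(\mu)}=\langle h_{n,i},h_{n,j}\rangle_{L^2(\mu)}=K_n(i,j)$. Hence each summand reduces to $\E[(C(i,j)-K_n(i,j))^2]=(C(i,j)-K_n(i,j))^2$, and the square root in (\ref{unotrino}) collapses, by the definition of the Hilbert--Schmidt norm, to exactly $\tfrac12\|C-K_n\|_{H.S.}$.

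Next I would treat the term (\ref{unotrino2}). Here too everything is deterministic, with $|D_z F_{n,i}|=|D_z L^{-1}F_{n,i}|=|h_{n,i}(z)|$, so the integral becomes $\tfrac14\int_Z\bigl(\sum_{i=1}^{d}|h_{n,i}(z)|\bigr)^3\mu(dz)$. The only genuine inequality needed is the elementary power-mean bound $\bigl(\sum_{i=1}^{d}a_i\bigr)^3\le d^2\sum_{i=1}^{d}a_i^3$ for nonnegative reals $a_i$, which follows from the convexity of $t\mapsto t^3$ on $[0,\infty)$ applied with uniform weights. Applying this pointwise in $z$ with $a_i=|h_{n,i}(z)|$ and integrating yields the bound $\tfrac{d^2}{4}\sum_{i=1}^{d}\int_Z|h_{n,i}(z)|^3\mu(dz)$, which is precisely the second term of the asserted estimate.

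Finally, the convergence statement is immediate: under (\ref{Panocha}), the entrywise convergence $K_n(i,j)\to C(i,j)$ forces $\|C-K_n\|_{H.S.}\to0$, while each third-moment integral $\int_Z|h_{n,i}(z)|^3\mu(dz)\to0$ kills the remaining term, so $d_3(F_n,X)\to0$; convergence in distribution then follows from Remark \ref{RM : CVinLAW}. There is no serious obstacle in this argument: the entire content is the substitution of first-chaos Malliavin identities into Theorem \ref{ineq_tala}, and the only non-trivial estimate is the $\ell^1$-to-$\ell^3$ power-mean inequality, which is exactly what generates the constant $d^2$.
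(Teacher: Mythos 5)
Your proof is correct and is exactly the argument the paper intends: the authors state Corollary \ref{Cor2} as a ``direct consequence'' of Theorem \ref{ineq_tala} without writing out the details, and those details are precisely your substitution of the first-chaos identities $DF_{n,i}=h_{n,i}$, $-DL^{-1}F_{n,i}=h_{n,i}$ into (\ref{unotrino})--(\ref{unotrino2}), followed by the power-mean bound $\bigl(\sum_{i=1}^{d}a_i\bigr)^3\le d^2\sum_{i=1}^{d}a_i^3$ that produces the constant $d^2$. No gaps.
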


\subsection{Stein's method versus smart paths: two tables}\label{SS : tables}

In the two tables below, we compare the estimations obtained by the Malliavin-Stein method with those deduced by interpolation techniques, both in a Gaussian and Poisson setting. Note that the test functions considered below have (partial) derivatives that are not necessarily bounded by 1 (as it is indeed the case in the definition of the distances $d_2$ and $d_3$) so that the $L^\infty$ norms of various derivatives appear in the estimates. In both tables, $d\geq 2$ is a given positive integer. We write $(G,G_1,\ldots,G_d)$ to indicate a vector of centered Malliavin differentiable functionals of an isonormal Gaussian process over some separable real Hilbert space $\h$ (see \cite{nualart} for definitions). We write $(F,F_1,...,F_d)$ to indicate a vector of centered functionals of $\hat{N}$, each belonging to ${\rm dom} D$. The symbols $D$ and $L^{-1}$ stand for the Malliavin derivative and the inverse of the Ornstein-Uhlenbeck generator: plainly, both are to be regarded as defined either on a Gaussian space or on a Poisson space, according to the framework. We also consider the following Gaussian random elements: $X\sim \N(0,1)$, $X_C \sim \N_d(0,C)$ and $X_M \sim \N_d(0,M)$, where $C$ is a $d\times d$ positive definite covariance matrix and $M$ is a $d\times d$ covariance matrix (not necessarily positive definite).\\

 In Table 1, we present all estimates on distances involving Malliavin differentiable random variables (in both cases of an underlying Gaussian and Poisson space), that have been obtained by means of Malliavin-Stein techniques. These results are taken from: \cite{np} (Line 1), \cite{npr} (Line 2), \cite{pstu} (Line 3) and Theorem \ref{ineq_2W} and its proof (Line 4).\\

\begin{table}
\caption{Estimates proved by means of Malliavin-Stein techniques}
\centering
\begin{tabular}{|c|c|}
  \hline
  {\bf Regularity of}&  {\bf Upper bound} \\
  {\bf the test function} $h$ &  \\
  \hline
  & \\
  $\|h\|_{Lip}$ is finite & $|\E[h(G)] - \E[h(X)] | \leq $ \\
   &$\|h\|_{Lip} \sqrt{\E[(1 - \langle  {D}G,-{D}{L}^{-1}G \rangle_{\h} )^2 ] }$ \\
  \hline
& \\
  $\|h\|_{Lip}$ is finite  & $|\E[h(G_1,\ldots,G_d)] - \E[h(X_C)] | \leq $ \\
  & $\|h\|_{Lip} \|C^{-1}\|_{op} \|C\|_{op}^{1/2} \sqrt{\sum_{i,j=1}^{d} \E[(C(i,j) - \langle  {D}G_i,-{D}{L}^{-1}G_j \rangle_{\h} )^2 ] }$  \\
  \hline
& \\
    $\|h\|_{Lip}$ is finite &   $|\E[h(F)] - \E[h(X)] | \leq $ \\
   & $\|h\|_{Lip} ( \sqrt{\E[(1 - \langle  DF,-DL^{-1}F \rangle_{L^2(\mu)} )^2 ] }$  \\
    &$+ \int_Z \mu(dz)\E[(|D_z F | )^2 |D_z L^{-1} F |  ])$ \\
  \hline
& \\
  $h \in \C^2(\Rd)$&  $|\E[h(F_1,\ldots,F_d)] - \E[h(X_C)] | \leq $\\
  $\|h\|_{Lip}$ is finite &  $\|h\|_{Lip} \|C^{-1}\|_{op} \|C\|_{op}^{1/2} \sqrt{\sum_{i,j=1}^{d} \E[(C(i,j) - \langle  DF_i,-DL^{-1}F_j \rangle_{L^2(\mu)} )^2 ] }$  \\
  $M_2(h)$ is finite &$+ M_2(h) \cfrac{\sqrt{2\pi}}{8} \|C^{-1}\|^{3/2}_{op} \|C\|_{op}  \int_Z \mu(dz)\E\left[\left(\smd|D_z F_i | \right)^2 \left(\smd|D_z L^{-1} F_i |  \right)
\right] $
\\
  \hline
\end{tabular}
\end{table}

In Table 2, we list the parallel results obtained by interpolation methods. The bounds involving functionals of a Gaussian process come from \cite{npr2}, whereas those for Poisson functionals are taken from Theorem \ref{ineq_tala} and its proof.\\

\begin{table}
\caption{Estimates proved by means of interpolations}
\centering
\begin{tabular}{|c|c|}
  \hline
  {\bf Regularity of}& {\bf Upper bound}\\
  {\bf the test function} $\phi$&  \\
  \hline
& \\
  $\phi\in \C^2(\R)$&  $|\E[\phi(G)] - \E[\phi(X)] | \leq $ \\
  $\|\phi''\|_{\infty}$ is finite &$\frac{1}{2} \|\phi''\|_{\infty} \sqrt{\E[(1 - \langle  {D}G,-{D}{L}^{-1}G \rangle_{\h} )^2 ] }$ \\
  \hline
& \\
  $\phi\in \C^2(\Rd)$& $|\E[\phi(G_1,\ldots,G_d)] - \E[\phi(X_M)] | \leq $\\
  $\|\phi''\|_{\infty}$ is finite & $\frac{1}{2} \|\phi''\|_{\infty} \sqrt{\sum_{i,j=1}^{d} \E[(M(i,j) - \langle  {D}G_i,-{D}{L}^{-1}G_j \rangle_{\h} )^2 ] }$  \\
  \hline
& \\
  $\phi\in \C^3(\R)$  & $|\E[\phi(F)] - \E[\phi(X)] | \leq $ \\
  $\|\phi''\|_{\infty}$ is finite & $\frac{1}{2} \|\phi''\|_{\infty}  \sqrt{\E[(1 - \langle  DF,-DL^{-1}F \rangle_{L^2(\mu)} )^2 ] }$  \\
  $\|\phi'''\|_{\infty}$ is finite  &$+ \frac{1}{4} \|\phi'''\|_{\infty} \int_Z \mu(dz)\E[(|D_z F | )^2 (|D_z L^{-1} F |  )]$ \\
  \hline
& \\
  $\phi \in \C^3(\Rd)$&  $|\E[\phi(F_1,\ldots,F_d)] - \E[\phi(X_M)] | \leq $\\
  $\|\phi''\|_{\infty}$ is finite &  $\frac{1}{2} \|\phi''\|_{\infty} \sqrt{\sum_{i,j=1}^{d} \E[(M(i,j) - \langle  DF_i,-DL^{-1}F_j \rangle_{L^2(\mu)} )^2 ] }$  \\
  $ \|\phi'''\|_{\infty}$ is finite &$+ \frac{1}{4}\|\phi'''\|_{\infty} \int_Z \mu(dz)\E\left[\left(\smd|D_z F_i | \right)^2 \left(\smd|D_z L^{-1} F_i | \right)
\right]$
\\
  \hline
\end{tabular}
\end{table}

Observe that:
\begin{itemize}
\item in contrast to the Malliavin-Stein method, the covariance matrix $M$ is not required to be positive definite when using the interpolation technique,
  \item in general, the interpolation technique requires more regularity on test functions than the Malliavin-Stein method.
\end{itemize}


\end{section}

\begin{section}{CLTs for Poisson multiple integrals}
In this section, we study the Gaussian approximation of vectors of Poisson multiple stochastic integrals by an application of Theorem \ref{ineq_2W} and Theorem \ref{ineq_tala}. To this end, we shall explicitly evaluate the quantities appearing in formulae (\ref{L1})--(\ref{L2}) and (\ref{ineq_tala2})--(\ref{unotrino2}).

\begin{remark}[Regularity conventions]\label{RMK : CONVENTIONS}
{\rm From now on, every kernel $f\in L^2_s(\mu ^p)$ is supposed to verify both Assumptions A and B of Definition \ref{DEF : ASSmptns}. As before, given $f\in L_s^2(\mu^p)$, and for a fixed $z\in Z$, we write $f(z,\cdot)$ to indicate the function defined on $Z^{p-1}$ as $(z_1,\ldots,z_{p-1}) \mapsto f(z,z_1,\ldots,z_{p-1}) $.  The following convention will be also in order: given a vector of kernels $(f_1,...,f_d)$ such that $f_i\in L^2_s(\mu^{p_i})$, $i=1,...,d$, we will implicitly set $$f_i(z,\cdot) \equiv 0, \,\,\, i=1,...,d,$$ for every $z\in Z$ belonging to the exceptional set (of $\mu$ measure 0) such that $$f_i(z,\cdot)\star_r^l f_j(z,\cdot) \in \!\!\!\!\! / \, L^2(\mu^{p_i+p_j-r-l-2} )$$ for at least one pair $(i,j)$ and some $r=0,...,p_i\wedge p_j-1$ and $l=0,...,r$. See Point 3 of Remark \ref{REMK : ASSUMPTIONS}.}
\end{remark}

\begin{subsection}{The operators $G_k^{p,q}$ and $\widehat{G_k^{p,q}}$}
Fix integers $p,q \geq 0$ and $|q-p| \leq k \leq p+q$, consider two kernels $f\in L^2_s(\mu^p)$ and $g \in L^2_s(\mu^q)$, and recall the multiplication formula (\ref{product}). We will now introduce an operator
$G_k^{p,q}$, transforming the function $f$, of $p$ variables,
and the function $g$, of $q$ variables, into a ``hybrid'' function $G_k^{p,q}(f,g)$, of
$k$ variables. More precisely, for $p,q,k $ as above, we define the function $(z_1,\ldots,z_k)\mapsto G_k^{p,q}(f,g)(z_1,\ldots,z_k)$, from $Z^k$ into $\R$, as follows:
\begin{equation}\label{force}
G_k^{p,q}(f,g)(z_1,\ldots,z_k) = \sum_{r=0}^{p\wedge q} \sum_{l=0}^{r}
\1_{(p+q-r-l=k)} r!
\left(
\begin{array}{c}
  p\\
  r\\
\end{array}
\right)
 \left(
\begin{array}{c}
  q\\
  r\\
\end{array}
\right)
 \left(
\begin{array}{c}
  r\\
  l\\
\end{array}
\right)  \widetilde{f\star_r^l g} ,
\end{equation}
where the tilde $\sim$ means symmetrization, and the star contractions are defined in formula (\ref{contraction}) and the subsequent discussion. Observe the following three special cases: (i) when $p=q=k=0$, then $f$ and $g$ are both real constants, and $G_0^{0,0}(f,g) = f\times g$, (ii) when $ p=q\geq 1$ and $k=0$, then $G_0^{p,p}(f,g) = p!\langle f,g \rangle_{L^2(\mu^p)} $, (iii) when $p=k=0$ and $q>0$ (then, $f$ is a constant), $G_0^{0,p}(f,g)(z_1,...,z_q) = f\times g(z_1,...,z_q)$. By using this notation, (\ref{product}) becomes
\begin{equation} \label{product2}
I_p(f)I_q(g) = \sum_{k=|q-p|}^{p+q} I_k(G_k^{p,q}(f,g)).
\end{equation}
The advantage of representation (\ref{product2}) (as opposed to (\ref{product})) is
that the RHS of (\ref{product2}) is an \emph{orthogonal sum}, a feature that will greatly simplify our forthcoming computations. \\

For two functions $f\in L_s^2(\mu^p) $ and $g\in L_s^2(\mu^q) $, we define the function $(z_1,\ldots,z_k)\mapsto \widehat{G_k^{p,q}}(f,g) (z_1,\ldots,z_k)$, from $Z^k$ into $\R$, as follows:
$$\widehat{G_k^{p,q}}(f,g) (\cdot) = \int_Z \mu(dz) {G_k^{p-1,q-1}}(f(z,\cdot),g(z,\cdot)),    $$
or, more precisely,
\begin{eqnarray}
& & \widehat{G_k^{p,q}}(f,g) (z_1,\ldots,z_k) \nonumber \\
&=&\!\!\! \int_Z \mu(dz) \sum_{r=0}^{p\wedge q -1} \sum_{l=0}^{r}
\1_{(p+q-r-l-2=k)} r!  \nonumber \\
&&\quad\quad\quad\quad\quad\quad\quad
\times \left(
\begin{array}{c}
  p-1\\
  r\\
\end{array}
\right)
 \left(
\begin{array}{c}
  q-1\\
  r\\
\end{array}
\right)
 \left(
\begin{array}{c}
  r\\
  l\\
\end{array}
\right)  \widetilde{f(z,\cdot)\star_r^l g(z,\cdot)} (z_1,\ldots,z_k)  \nonumber  \\
&=&\!\!\!  \sum_{t=1}^{p\wedge q} \sum_{s=1}^{t}
\1_{(p+q-t-s=k)} (t-1)!
\left(
\begin{array}{c}
  p-1\\
  t-1\\
\end{array}
\right)
\! \left(
\begin{array}{c}
  q-1\\
  t-1\\
\end{array}
\right)
\! \left(
\begin{array}{c}
  t-1\\
  s-1\\
\end{array}
\right)  \widetilde{f\star_t^s g} (z_1,\ldots,z_k) .  \label{G_hat}
\end{eqnarray}
Note that the implicit use of a Fubini theorem in the equality (\ref{G_hat}) is justified by Assumption B -- see again Point 3 of Remark \ref{REMK : ASSUMPTIONS}.\\

The following technical lemma will be applied in the next subsection.

\begin{lemma} \label{lemma_G_hat}
Consider three positive integers $p,q,k$ such that $p,q\geq 1$ and $|q-p| \vee 1 \leq k \leq p+q-2 $. For any two kernels $f\in L^2_s(\mu^p)$ and $ g\in L^2_s(\mu^q)$, both verifying Assumptions A and B, we have
\begin{eqnarray}
\int_{Z^k} d\mu^k (\widehat{G_k^{p,q}}(f,g)(z_1,\ldots,z_k))^2 \leq C \sum_{t=1}^{p\wedge q} \1_{1\leq s(t,k) \leq t}
\| \widetilde{f \star_t^{s(t,k)} g}\|_{L^2(\mu^k)}^2
\end{eqnarray}
where $s(t,k)=p+q-k-t$ for $t=1,\ldots,p\wedge q$. Also, $C$ is the constant given by
$$ C=\sum_{t=1}^{p\wedge q} \left[(t-1)!
\left(
\begin{array}{c}
  p-1\\
  t-1\\
\end{array}
\right)
 \left(
\begin{array}{c}
  q-1\\
  t-1\\
\end{array}
\right)
 \left(
\begin{array}{c}
  t-1\\
  s(t,k)-1\\
\end{array}
\right) \right]^2 . $$
\end{lemma}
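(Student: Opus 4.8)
The plan is to reduce the apparent double sum in (\ref{G_hat}) to a single sum and then to invoke the elementary Cauchy--Schwarz inequality for finite families of vectors in $L^2(\mu^k)$. The first thing I would observe is that the indicator $\1_{(p+q-t-s=k)}$ forces $s=s(t,k)=p+q-k-t$, so that for each $t\in\{1,\ldots,p\wedge q\}$ at most one value of $s$ survives, and the corresponding term is present precisely when $1\leq s(t,k)\leq t$. Setting
$$ a_t := (t-1)!\binom{p-1}{t-1}\binom{q-1}{t-1}\binom{t-1}{s(t,k)-1}, $$
with the usual convention $\binom{n}{j}=0$ for $j<0$ or $j>n$, the formula (\ref{G_hat}) collapses to
$$ \widehat{G_k^{p,q}}(f,g) = \sum_{t=1}^{p\wedge q} \1_{1\leq s(t,k)\leq t}\, a_t\, \widetilde{f\star_t^{s(t,k)} g}. $$

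Next I would verify that every surviving summand is a genuine element of $L^2(\mu^k)$, so that all the norms in the statement are finite. Whenever $1\leq s(t,k)\leq t\leq p\wedge q$, Lemma \ref{lemma_ineq} gives $f\star_t^{s(t,k)} g\in L^2(\mu^{p+q-t-s(t,k)})=L^2(\mu^k)$ (since $p+q-t-s(t,k)=k$ by definition of $s(t,k)$), and by (\ref{ineq_sym}) the symmetrization $\widetilde{f\star_t^{s(t,k)} g}$ lies in the same space with no larger norm. This is exactly where Assumption A (through Lemma \ref{lemma_ineq}) is used.

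Finally, applying the triangle inequality in $L^2(\mu^k)$ followed by the Cauchy--Schwarz inequality over the finite index set $\{t:\,1\leq s(t,k)\leq t\}$ yields
$$ \big\| \widehat{G_k^{p,q}}(f,g)\big\|_{L^2(\mu^k)}^2 \leq \Big(\sum_{t=1}^{p\wedge q}\1_{1\leq s(t,k)\leq t}\,a_t^2\Big)\Big(\sum_{t=1}^{p\wedge q}\1_{1\leq s(t,k)\leq t}\,\big\|\widetilde{f\star_t^{s(t,k)} g}\big\|_{L^2(\mu^k)}^2\Big). $$
Since $a_t^2\geq 0$ for every $t$, the first factor is bounded above by $\sum_{t=1}^{p\wedge q}a_t^2=C$, which is precisely the asserted estimate.

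The argument is essentially bookkeeping, and I do not expect a genuine obstacle. The only step requiring care is the very first one: correctly recognizing that the double sum over $(t,s)$ is really a single sum indexed by $t$, with $s=s(t,k)$ determined by $t$ and $k$, and keeping track of the range condition $1\leq s(t,k)\leq t$ that governs which contractions actually occur. Once this is in place, finiteness of every norm is supplied by Lemma \ref{lemma_ineq} and the remaining inequality is the standard $\|\sum_t c_t u_t\|^2\leq\big(\sum_t c_t^2\big)\big(\sum_t\|u_t\|^2\big)$.
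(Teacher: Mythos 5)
Your proposal is correct and follows essentially the same route as the paper: collapse the double sum in (\ref{G_hat}) to a single sum over $t$ with $s=s(t,k)$, then apply the Cauchy--Schwarz inequality with weights $a_t$ to get the constant $C=\sum_t a_t^2$. The only cosmetic difference is that the paper applies the scalar inequality $\left(\sum_t a_t x_t\right)^2\leq\left(\sum_t a_t^2\right)\left(\sum_t x_t^2\right)$ pointwise in $(z_1,\ldots,z_k)$ before integrating, whereas you work directly with norms in $L^2(\mu^k)$; the two are interchangeable here.
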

\begin{proof}
We rewrite the sum in (\ref{G_hat}) as
\begin{equation} \label{G_hat2}
 \widehat{G_k^{p,q}}(f,g) (z_1,\ldots,z_k) =  \sum_{t=1}^{p\wedge q} a_t  \1_{1\leq s(t,k) \leq t} \widetilde{f\star_t^{s(t,k)} g} (z_1,\ldots,z_k) ,
\end{equation}
with $a_t = (t-1)!
\left(
\begin{array}{c}
  p-1\\
  t-1\\
\end{array}
\right)
 \left(
\begin{array}{c}
  q-1\\
  t-1\\
\end{array}
\right)
 \left(
\begin{array}{c}
  t-1\\
  s(t,k)-1\\
\end{array}
\right)$, $\quad 1\leq t \leq p\wedge q$.
Thus,
\begin{eqnarray*}
& &\int_{Z^k} d\mu^k (\widehat{G_k^{p,q}}(f,g)(z_1,\ldots,z_k))^2 \\
&=& \int_{Z^k} d\mu^k \left( \sum_{t=1}^{p\wedge q} a_t  \1_{1\leq s(t,k) \leq t} \widetilde{f\star_t^{s(t,k)} g} (z_1,\ldots,z_k) \right)^2 \\
&\leq& \left(\sum_{t=1}^{p\wedge q} a_t^2\right) \int_{Z^k} d\mu^k  \left(\sum_{t=1}^{p\wedge q} (\1_{1\leq s(t,k) \leq t} \widetilde{f \star_t^{s(t,k)} g}(z_1,\ldots,z_k) )^2\right) \\
&=&   C \sum_{t=1}^{p\wedge q} \int_{Z^k} d\mu^k \1_{1\leq s(t,k) \leq t} (\widetilde{f \star_t^{s(t,k)} g}(z_1,\ldots,z_k) )^2 \\
&=& C \sum_{t=1}^{p\wedge q} \1_{1\leq s(t,k) \leq t}
\| \widetilde{f \star_t^{s(t,k)} g}\|_{L^2(\mu^k)}^2,
\end{eqnarray*}
with
$$ C=\sum_{t=1}^{p\wedge q} a_t^2=\sum_{t=1}^{p\wedge q} \left[(t-1)!
\left(
\begin{array}{c}
  p-1\\
  t-1\\
\end{array}
\right)
 \left(
\begin{array}{c}
  q-1\\
  t-1\\
\end{array}
\right)
 \left(
\begin{array}{c}
  t-1\\
  s(t,k)-1\\
\end{array}
\right) \right]^2 $$
Note that the Cauchy-Schwarz inequality
$$ \left(\sum_{i=1}^{n} a_i x_i \right)^2 \leq \left(\sum_{i=1}^{n} a_i^2 \right) \left(\sum_{i=1}^{n} x_i^2 \right) $$ has been used in the above deduction.
\end{proof}

\end{subsection}

\begin{subsection}{Some technical estimates}
As anticipated, in order to prove the multivariate CLTs of the forthcoming Section \ref{subsection_CLT}, we need to establish explicit bounds on the quantities appearing in (\ref{L1})--(\ref{L2}) and (\ref{ineq_tala2})--(\ref{unotrino2}), in the special case of chaotic random variables. \\

\begin{definition}
Let the integers $p,q\geq 1$ be such that $\max(p,q)>1$. The kernels $f\in L^2_s(\mu^p)$, $g\in L^2_s(\mu^q)$ are said to satisfy {\bf Assumption C} if, for every $k=|q-p|\vee 1, \ldots, p+q-2$,
\begin{equation}  \label{Fubini_cond}
\int_Z \left[\sqrt{\int_{Z^k}(G_k^{p-1,q-1} (f(z,\cdot),g(z,\cdot)) )^2 d \mu^k } \quad \right] \mu(dz) < \infty .
\end{equation}
\end{definition}

\begin{remark}\label{RMK : ON CONDITIONS}{\rm
By using (\ref{force}), one sees that (\ref{Fubini_cond}) is implied by the following stronger condition: for every $k=|q-p|\vee 1, \ldots, p+q-2$, and every $(r,l)$ satisfying $p+q-2-r-l=k$, one has
\begin{equation}  \label{Fubini_cond2}
\int_Z \left[\sqrt{\int_{Z^k}(f(z,\cdot)\star^l_rg(z,\cdot))^2 d \mu^k } \quad \right] \mu(dz) < \infty .
\end{equation}
One can easily write down sufficient conditions, on $f$ and $g$, ensuring that (\ref{Fubini_cond2}) is satisfied. For instance, in the examples of Section 6, we will use repeatedly the following fact: if both $f$ and $g$ verify Assumption A, and if their supports are contained in the finite union of rectangles of the type $B\times \ldots \times B$, with $\mu(B)<\infty$, then (\ref{Fubini_cond2}) is automatically satisfied.
}
\end{remark}


\begin{proposition} \label{bound}
Denote by $L^{-1}$ the pseudo-inverse of the Ornstein-Uhlenbeck generator (see the Appendix in Section \ref{APPENDIX}), and let $F=I_p(f)$ and $G=I_q(g)$ be such that the kernels $f\in L^2_s(\mu^p)$ and $g \in L^2_s(\mu^q)$ verify Assumptions A, B and C. If $p \neq q$, then
\begin{eqnarray*}
& & \E[(a - \langle DF,-DL^{-1}G \rangle_{L^2(\mu)}  )^2 ] \\
&\leq& a^2 + p^2 \sum_{k=|q-p|}^{p+q-2} k! \int_{Z^k} d\mu^k (\widehat{G_k^{p,q}}(f,g))^2 \\
&\leq& a^2 + C p^2 \sum_{k=|q-p|}^{p+q-2} k! \sum_{t=1}^{p\wedge q} \1_{1\leq s(t,k) \leq t} \| \widetilde{f \star_t^{s(t,k)} g}\|_{L^2(\mu^k)}^2 \\
&\leq& a^2 + \cfrac{1}{2} C p^2 \sum_{k=|q-p|}^{p+q-2} k! \sum_{t=1}^{p\wedge q} \1_{1\leq s(t,k) \leq t} (\|f\star^{p-t}_{p-s(t,k)}f\|_{L^2(\mu^{t+s(t,k)})} \times
 \|g\star^{q-t}_{q-s(t,k)}g\|_{L^2(\mu^{t+s(t,k)})} ) \\
\end{eqnarray*}
If $p = q$, then
\begin{eqnarray*}
& & \E[(a - \langle DF,-DL^{-1}G \rangle_{L^2(\mu)}  )^2 ] \\
&\leq & (p!\langle f,g \rangle_{L^2(\mu^p)} -a)^2 + p^2 \sum_{k=1}^{2p-2}
k! \int_{Z^k} d\mu^k (\widehat{G_k^{p,q}}(f,g))^2  \\
&\leq & (p!\langle f,g \rangle_{L^2(\mu^p)} -a)^2 + C p^2 \sum_{k=1}^{2p-2}
k! \sum_{t=1}^{p\wedge q} \1_{1\leq s(t,k) \leq t}
\| \widetilde{f \star_t^{s(t,k)} g}\|_{L^2(\mu^k)}^2 \\
&\leq & (p!\langle f,g \rangle_{L^2(\mu^p)} -a)^2 \\
&& + \cfrac{1}{2} C p^2 \sum_{k=1}^{2p-2}
k! \sum_{t=1}^{p\wedge q} \1_{1\leq s(t,k) \leq t}  (\|f\star^{p-t}_{p-s(t,k)}f\|_{L^2(\mu^{t+s(t,k)})} \times
 \|g\star^{q-t}_{q-s(t,k)}g\|_{L^2(\mu^{t+s(t,k)})} ) \\
\end{eqnarray*}
where $s(t,k)=p+q-k-t$ for $t=1,\ldots,p\wedge q$. Finally, the constant $C$ is given by
$$ C=\sum_{t=1}^{p\wedge q} \left[(t-1)!
\left(
\begin{array}{c}
  p-1\\
  t-1\\
\end{array}
\right)
 \left(
\begin{array}{c}
  q-1\\
  t-1\\
\end{array}
\right)
 \left(
\begin{array}{c}
  t-1\\
  s(t,k)-1\\
\end{array}
\right) \right]^2.
$$
\end{proposition}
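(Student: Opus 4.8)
The plan is to compute the random variable $\langle DF,-DL^{-1}G\rangle_{L^2(\mu)}$ explicitly as a finite orthogonal sum of multiple integrals, and then to expand the square and invoke the isometric property of Proposition \ref{P : MWIone}. First I would use the fact that the $q$th chaos is an eigenspace of the Ornstein-Uhlenbeck generator (see the Appendix, Section \ref{APPENDIX}), so that $L^{-1}G=-\tfrac1q I_q(g)$ and hence $-DL^{-1}G=\tfrac1q DG$. Combining this with the chaos derivative formula $D_zI_n(h)=n\,I_{n-1}(h(z,\cdot))$ valid on the Poisson space, I obtain $D_zF=p\,I_{p-1}(f(z,\cdot))$ and $-D_zL^{-1}G=I_{q-1}(g(z,\cdot))$, whence
\[
\langle DF,-DL^{-1}G\rangle_{L^2(\mu)} = p\int_Z I_{p-1}(f(z,\cdot))\,I_{q-1}(g(z,\cdot))\,\mu(dz).
\]

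Next I would apply the product formula in the form (\ref{product2}) to the integrand, expanding it as $\sum_{k=|q-p|}^{p+q-2} I_k\big(G_k^{p-1,q-1}(f(z,\cdot),g(z,\cdot))\big)$, and then exchange the deterministic integral $\int_Z\mu(dz)$ with the finite sum and with the operator $I_k$. This stochastic Fubini step is exactly where Assumption C enters: condition (\ref{Fubini_cond}) states that $\int_Z \|G_k^{p-1,q-1}(f(z,\cdot),g(z,\cdot))\|_{L^2(\mu^k)}\,\mu(dz)<\infty$, which (via the isometry, treating $I_k$ as a bounded operator into $L^2(\Omega)$) legitimizes pulling $\int_Z\mu(dz)$ inside $I_k$ and identifies the result, through the definition (\ref{G_hat}), as $\widehat{G_k^{p,q}}(f,g)$. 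I expect this interchange to be the main obstacle, as it is the only non-algebraic point; the integrability bookkeeping behind it is precisely the role played by Assumptions A, B, C and the conventions of Remark \ref{RMK : CONVENTIONS}. The outcome is
\[
\langle DF,-DL^{-1}G\rangle_{L^2(\mu)} = p\sum_{k=|q-p|}^{p+q-2} I_k\big(\widehat{G_k^{p,q}}(f,g)\big).
\]

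At this point I would separate the two cases. If $p\neq q$ the range starts at $|q-p|\geq 1$, so the right-hand side is a finite sum of multiple integrals of strictly positive order and is therefore centered; if $p=q$ the term $k=0$ appears and, by special case (ii) of $G_k^{p,q}$, equals the constant $p\cdot(p-1)!\langle f,g\rangle_{L^2(\mu^p)}=p!\langle f,g\rangle_{L^2(\mu^p)}$, the remaining terms $1\le k\le 2p-2$ again being centered. In both cases I write $a-\langle DF,-DL^{-1}G\rangle_{L^2(\mu)}$ as a constant plus a centered sum of multiple integrals; expanding the square, the cross terms vanish by orthogonality, and Proposition \ref{P : MWIone} together with the symmetry of each kernel $\widehat{G_k^{p,q}}(f,g)$ gives
\[
\E\big[(a-\langle DF,-DL^{-1}G\rangle_{L^2(\mu)})^2\big] = c_{p,q}^2 + p^2\!\!\sum_{k=|q-p|\vee 1}^{p+q-2}\!\! k!\,\big\|\widehat{G_k^{p,q}}(f,g)\big\|_{L^2(\mu^k)}^2,
\]
where $c_{p,q}=a$ when $p\ne q$ and $c_{p,q}=p!\langle f,g\rangle_{L^2(\mu^p)}-a$ when $p=q$. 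This is the first displayed bound (in fact an equality) in each case.

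Finally I would obtain the two remaining inequalities by purely deterministic kernel estimates. The second inequality follows by inserting the bound of Lemma \ref{lemma_G_hat} for $\|\widehat{G_k^{p,q}}(f,g)\|_{L^2(\mu^k)}^2$, with $s(t,k)=p+q-k-t$ and the constant $C$ as defined there. The third inequality then follows from the symmetrization estimate (\ref{ineq_sym}), namely $\|\widetilde{f\star_t^s g}\|_{L^2(\mu^k)}\le\|f\star_t^s g\|_{L^2(\mu^k)}$, combined with Lemma \ref{lemma_ineq}(2), which bounds $\|f\star_t^s g\|_{L^2(\mu^k)}^2$ by the product $\|f\star^{p-t}_{p-s}f\|_{L^2(\mu^{t+s})}\,\|g\star^{q-t}_{q-s}g\|_{L^2(\mu^{t+s})}$; this delivers the last line of each display. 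No further analytic input is needed beyond these lemmas, so once the Fubini interchange is secured the rest is routine.
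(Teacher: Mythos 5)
Your proposal is correct and follows essentially the same route as the paper: compute $\langle DF,-DL^{-1}G\rangle_{L^2(\mu)}$ via the eigenvalue property of $L$ and the product formula (\ref{product2}), use orthogonality of the different chaos orders to reduce the square to a sum of second moments (with the extra constant $p!\langle f,g\rangle_{L^2(\mu^p)}$ appearing only when $p=q$), invoke Assumption C to justify the Fubini interchange identifying $\widehat{G_k^{p,q}}(f,g)$, and finish with Lemma \ref{lemma_G_hat}, the symmetrization bound (\ref{ineq_sym}) and Lemma \ref{lemma_ineq}. The only cosmetic difference is that the paper performs the interchange by expanding $\E[(\int_Z\mu(dz)I_k(\cdot))^2]$ as a double integral controlled by Cauchy--Schwarz, whereas you commute $\int_Z\mu(dz)$ directly inside $I_k$ as a Bochner integral; the two are equivalent, and your observation that the first bound is in fact an equality matches the paper's displays (\ref{volks})--(\ref{wagen}).
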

\begin{proof}
We select two versions of the derivatives $D_zF=pI_{p-1}(f(z,\cdot))$ and $D_zG=qI_{q-1}(g(z,\cdot))$, in such a way that the conventions pointed out in Remark \ref{RMK : CONVENTIONS} are satisfied. By using the definition of $L^{-1}$ and (\ref{product2}), we have
\begin{eqnarray*}
\langle DF,-DL^{-1}G \rangle_{L^2(\mu)} &=&\langle D I_p(f), q^{-1
}D I_q(g) \rangle_{L^2(\mu)}    \\
&=&p \int_Z\mu(dz) I_{p-1}(f(z,\cdot)) I_{q-1}(g(z,\cdot))  \\
&=& p \int_Z \mu(dz) \sum_{k=|q-p|}^{p+q-2} I_k(G_k^{p-1,q-1}(f(z,\cdot),g(z,\cdot))) \\
\end{eqnarray*}
Notice that for $i \neq j$, the two random variables
$$\int_Z \mu(dz) I_i(G_i^{p-1,q-1}(f(z,\cdot),g(z,\cdot))\quad \text{and}
\quad \int_Z \mu(dz) I_j(G_j^{p-1,q-1}(f(z,\cdot),g(z,\cdot))) $$
are orthogonal in $L^2(\p)$. It follows that
\begin{eqnarray}
& & \E[(a - \langle DF,-DL^{-1}G \rangle_{L^2(\mu)}  )^2 ] \label{volks}\\
&&\quad\quad\quad\quad= a^2 + p^2 \sum_{k=|q-p|}^{p+q-2} \E\left[\left(\int_Z \mu(dz) I_k(G_k^{p-1,q-1}(f(z,\cdot),g(z,\cdot)))    \right)^2\right] \notag
\end{eqnarray}
for $p \neq q$, and, for $p=q$,
\begin{eqnarray}
& & \E[(a - \langle DF,-DL^{-1}G \rangle_{L^2(\mu)}  )^2 ] \label{wagen}\\
&&\quad\quad= (p!\langle f,g \rangle_{L^2(\mu^p)} -a)^2 + p^2 \sum_{k=1}^{2p-2} \E\left[\left(\int_Z \mu(dz) I_k(G_k^{p-1,q-1}(f(z,\cdot),g(z,\cdot)))    \right)^2\right]. \notag
\end{eqnarray}
We shall now assess the expectations appearing on the RHS of (\ref{volks}) and (\ref{wagen}). To do this, fix an integer $k$ and use the Cauchy-Schwartz inequality together with (\ref{Fubini_cond}) to deduce that
\begin{eqnarray}
& &\int_Z \mu(dz) \int_Z \mu(dz') \E\left[ \left| I_k(G_k^{p-1,q-1}(f(z,\cdot),g(z,\cdot)))
I_k(G_k^{p-1,q-1}(f(z',\cdot),g(z',\cdot))) \right| \right] \notag \\
&\leq& \int_Z \mu(dz) \int_Z \mu(dz')
\sqrt{ \E[ I_k^2(G_k^{p-1,q-1}(f(z,\cdot),g(z,\cdot)))    ]}
\sqrt{ \E[ I_k^2(G_k^{p-1,q-1}(f(z',\cdot),g(z',\cdot)))  ]} \notag\\
&=&  k! \left[\int_Z \mu(dz) \sqrt{\int_{Z^k} d\mu^k (G_k^{p-1,q-1}(f(z,\cdot),g(z,\cdot)))^2} \quad\right] \notag\\
& & \quad\quad\quad\quad\quad\quad\quad\quad\times \left[\int_Z \mu(dz') \sqrt{\int_{Z^k} d\mu^k (G_k^{p-1,q-1}(f(z',\cdot),g(z',\cdot)))^2} \quad\right] \notag\\
&=&  k! \left[\int_Z \mu(dz) \sqrt{\int_{Z^k} d\mu^k (G_k^{p-1,q-1}(f(z,\cdot),g(z,\cdot)))^2} \quad \right]^2 < \infty. \label{milan}
\end{eqnarray}
Relation (\ref{milan}) justifies the use of a Fubini theorem, and we can consequently infer that
\begin{eqnarray*}
& &  \E\left[\left(\int_Z \mu(dz) I_k(G_k^{p-1,q-1}(f(z,\cdot),g(z,\cdot)))    \right)^2\right] \\
&=& \int_Z \mu(dz) \int_Z \mu(dz') \E[ I_k(G_k^{p-1,q-1}(f(z,\cdot),g(z,\cdot)))
I_k(G_k^{p-1,q-1}(f(z',\cdot),g(z',\cdot))) ]  \\
&=& k! \int_Z \mu(dz) \int_Z \mu(dz') \left[\int_{Z^k} d\mu^k {G_k^{p-1,q-1}}(f(z,\cdot),g(z,\cdot)) {G_k^{p-1,q-1}}(f(z',\cdot),g(z',\cdot)) \right]  \\
&=& k! \int_{Z^k} d\mu^k \left[\int_{Z} \mu(dz) {G_k^{p-1,q-1}}(f(z,\cdot),g(z,\cdot)) \right]^2  \\
&=& k! \int_{Z^k} d\mu^k (\widehat{G_k^{p,q}}(f,g))^2 .
\end{eqnarray*}
The remaining estimates in the statement follow (in order) from Lemma \ref{lemma_G_hat} and Lemma \ref{lemma_ineq}, as well as from the fact that
$\|\widetilde{f}\|_{L^2(\mu^n)} \leq  \|f\|_{L^2(\mu^n)} $, for all $n\geq 2$.
\end{proof}\\

The next statement will be used in the subsequent section.
\begin{proposition} \label{ineq_res2}
Let $ F = (F_1,\ldots , F_d) := ( I_{q_1}(f_1),\ldots , I_{q_d}(f_d)) $ be a vector of Poisson functionals, such that the kernels $f_j$ verify Assumptions A and B. Then, by noting $q_{*} := \min\{q_1,...,q_d\}$,
\begin{eqnarray*}
& &\int_Z \mu(dz)\E\left[\left(\smd|D_z F_i | \right)^2 \left(\smd|D_z L^{-1} F_i |  \right) \right] \\
&\leq& \frac{d^2}{q_{*}} \smd
\Big( q_i^3 \sqrt{(q_i-1)!\|f\|^2_{L^2(\mu^{q_i})}}
\times \sum_{b=1}^{q_i} \sum_{a=0}^{b-1} \1_{1\leq a+b\leq 2q_i-1} (a+b-1)!^{1/2} (q_i-a-1)!\\
& & \times
\left(
\begin{array}{c}
  q_i-1\\
  q_i-1-a\\
\end{array}
\right)^2
 \left(
\begin{array}{c}
  q_i-1-a\\
  q_i-b\\
\end{array}
\right)
\|f\star^a_b f \|_{L^2(\mu^{2q_i-a-b})} \Big).
\end{eqnarray*}
\end{proposition}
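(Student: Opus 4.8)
The plan is to express the integrand through the integrals $I_{q_i-1}(f_i(z,\cdot))$ and then collapse the triple product into a single sum of third absolute moments. As in the proof of Proposition \ref{bound}, I would choose versions of the derivatives so that $D_z F_i = q_i I_{q_i-1}(f_i(z,\cdot))$, while $L^{-1}F_i = -q_i^{-1}F_i$ gives $D_z L^{-1}F_i = -I_{q_i-1}(f_i(z,\cdot))$. Writing $\Phi_i(z) := |I_{q_i-1}(f_i(z,\cdot))|$, the integrand becomes $(\smd q_i\Phi_i)^2(\smd\Phi_i)$. Since $q_i\ge q_*$ for every $i$, one has $\smd\Phi_i\le q_*^{-1}\smd q_i\Phi_i$, and the convexity bound $(\sum_{i=1}^d a_i)^3\le d^2\sum_{i=1}^d a_i^3$ (valid for $a_i\ge 0$) yields the pointwise estimate
$$\Big(\smd q_i\Phi_i\Big)^2\Big(\smd\Phi_i\Big)\le\frac{1}{q_*}\Big(\smd q_i\Phi_i\Big)^3\le\frac{d^2}{q_*}\smd q_i^3\,\Phi_i^3.$$
Taking expectations and integrating $\mu(dz)$ then reduces the problem to bounding $q_i^3\int_Z\E[\Phi_i^3]\,\mu(dz)$ for each fixed $i$, which already produces the prefactor $d^2/q_*$ and the factor $q_i^3$ of the statement.

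For the third absolute moment I would apply Cauchy-Schwarz twice. Pointwise in $z$, $\E[\Phi_i^3]=\E[\Phi_i\cdot\Phi_i^2]\le(\E[\Phi_i^2])^{1/2}(\E[\Phi_i^4])^{1/2}$; integrating in $z$ and using Cauchy-Schwarz on $L^2(Z,\mu)$,
$$\int_Z\E[\Phi_i^3]\,\mu(dz)\le\Big(\int_Z\E[\Phi_i^2]\,\mu(dz)\Big)^{1/2}\Big(\int_Z\E[\Phi_i^4]\,\mu(dz)\Big)^{1/2}.$$
The first factor is controlled by the isometric property of Proposition \ref{P : MWIone} together with (\ref{ineq_sym}): one has $\int_Z\E[\Phi_i^2]\,\mu(dz)=(q_i-1)!\int_Z\|\widetilde{f_i(z,\cdot)}\|_{L^2(\mu^{q_i-1})}^2\,\mu(dz)\le(q_i-1)!\,\|f_i\|_{L^2(\mu^{q_i})}^2$, whose square root is exactly the factor $\sqrt{(q_i-1)!\,\|f_i\|^2}$ appearing in the statement.

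The remaining, genuinely delicate, task is the fourth moment $\int_Z\E[\Phi_i^4]\,\mu(dz)$. Setting $m=q_i-1$ and $h=f_i(z,\cdot)$, I would expand $I_m(h)^2$ by the product formula in the orthogonal form (\ref{product2}), so that $\E[I_m(h)^4]=\sum_{k=0}^{2m}k!\,\|G_k^{m,m}(h,h)\|_{L^2(\mu^k)}^2$ by chaotic orthogonality. Pulling the $(r,l)$-sum out of each $\|G_k^{m,m}(h,h)\|$ by the triangle inequality, pulling the $z$-integration through the square root by Minkowski's inequality in $L^2(Z,\mu)$, and discarding symmetrizations via (\ref{ineq_sym}), the square root of $\int_Z\E[\Phi_i^4]\,\mu(dz)$ is bounded by a linear combination of the terms $\big(\int_Z\|f_i(z,\cdot)\star_r^l f_i(z,\cdot)\|_{L^2(\mu^{2m-r-l})}^2\,\mu(dz)\big)^{1/2}$, with coefficients $\sqrt{k!}\,r!\binom{m}{r}^2\binom{r}{l}$ and $k=2m-r-l$. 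The pivotal identity is that freezing and then integrating the variable $z$ converts an integrated contraction of $f_i(z,\cdot)$ into a contraction of the full kernel, namely $\int_Z\|f_i(z,\cdot)\star_r^l f_i(z,\cdot)\|^2\,\mu(dz)=\|f_i\star_{r+1}^{l}f_i\|^2$; this is verified by writing both sides as the same four-fold integral of $f_i$, in which the common variable $z$ plays the role of one extra identified-and-integrated index.

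The main obstacle is precisely the combinatorial bookkeeping in this last step. After the identity above one must reindex through $a=m-r$, $b=m+1-l$ and invoke the duality $\|f_i\star_{q_i-a}^{q_i-b}f_i\|=\|f_i\star_b^a f_i\|$ from Lemma \ref{lemma_ineq}; a direct check then shows $k=a+b-1$, $r=q_i-1-a$ and $l=q_i-b$, so that $\sqrt{k!}$ becomes $(a+b-1)!^{1/2}$, $r!$ becomes $(q_i-a-1)!$, and $\binom{m}{r}^2\binom{r}{l}$ becomes $\binom{q_i-1}{q_i-1-a}^2\binom{q_i-1-a}{q_i-b}$, while the constraints $0\le l\le r\le m$ translate into $0\le a\le b-1\le q_i-1$ and $1\le a+b\le 2q_i-1$. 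All the interchanges of integration (in the product formula and in the $z$-variable) are legitimate by Assumptions A and B, and the final passage $\sqrt{\sum_k c_k^2}\le\sum_k c_k$ turns the $\ell^2$ sum over $k$ into the announced sum of contraction norms, matching the statement term by term.
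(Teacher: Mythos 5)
Your argument is correct, and its first step is exactly the paper's: rewrite $|D_zL^{-1}F_i|=q_i^{-1}|D_zF_i|$, bound $q_i^{-1}\le q_*^{-1}$, and use $(\sum_i a_i)^3\le d^2\sum_i a_i^3$ to reduce everything to $\frac{d^2}{q_*}\sum_i q_i^3\int_Z\E[|I_{q_i-1}(f_i(z,\cdot))|^3]\mu(dz)$. The only difference is that at this point the paper simply cites the one-dimensional third-moment inequality from \cite[Theorem 4.2, formulae (4.13) and (4.18)]{pstu}, whereas you re-derive it; your reconstruction (Cauchy--Schwarz splitting the third moment into the second and fourth moments, the orthogonal product-formula expansion of the fourth moment, the identity $\int_Z\|f(z,\cdot)\star_r^l f(z,\cdot)\|^2\mu(dz)=\|f\star_{r+1}^{l}f\|^2$, and the reindexing giving $k=a+b-1$, $r=q_i-1-a$, $l=q_i-b$) is sound and reproduces the stated constants term by term.
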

\begin{proof} One has that
\begin{eqnarray*}
&&\int_Z \mu(dz)\E\left[\left(\smd|D_z F_i | \right)^2 \left(\smd|D_z L^{-1} F_i |  \right) \right] \\
&&=\int_Z \mu(dz)\E\left[\left(\smd|D_z F_i | \right)^2 \left(\smd \frac{1}{q_i}|D_z F_i |  \right) \right] \\
&&\leq \frac{1}{q_{*}} \int_Z \mu(dz)\E\left[\left(\smd|D_z F_i | \right)^3 \right] \\
&&\leq \frac{d^2}{q_{*}} \smd \int_Z \mu(dz)\E[|D_z F_i |^3 ]. \\
\end{eqnarray*}
To conclude, use the inequality
\begin{eqnarray*}
& & \int_Z \mu(dz) |\E[D_z I_q(f)]|^3  \\
&\leq& q^3 \sqrt{(q-1)!\|f\|^2_{L^2(\mu^q)}}
\times \sum_{b=1}^{q} \sum_{a=0}^{b-1} \1_{1\leq a+b\leq 2q-1} (a+b-1)!^{1/2} (q-a-1)!\\
& & \times
\left(
\begin{array}{c}
  q-1\\
  q-1-a\\
\end{array}
\right)^2
 \left(
\begin{array}{c}
  q-1-a\\
  q-b\\
\end{array}
\right)
\|f\star^a_b f \|_{L^2(\mu^{2q-a-b})}
\end{eqnarray*}
which is proved in \cite[Theorem 4.2]{pstu} (see in particular formulae (4.13) and (4.18) therein).
\end{proof}

\end{subsection}

\begin{subsection}{Central limit theorems with contraction conditions}  \label{subsection_CLT}
We will now deduce the announced CLTs for sequences of vectors of the type
\begin{equation}\label{redriding}
F^{(n)} = (F^{(n)}_1,\ldots , F^{(n)}_d) := ( I_{q_1}(f^{(n)}_1),\ldots , I_{q_d}(f^{(n)}_d)), \quad n\geq 1.
\end{equation}
 As already discussed, our results should be compared with other central limit results for multiple stochastic integrals in a Gaussian or Poisson setting -- see e.g. \cite{np, npr, no, NuPe, pt, ptudor}. The following statement, which is a genuine multi-dimensional generalization of Theorem 5.1 in \cite{pstu}, is indeed one of the main achievements of the present article.

\begin{theorem}[CLT for chaotic vectors] \label{CLT}
Fix $d\geq 2$, let $X \sim \N(0,C)$, with $$C=\{C(i,j):i,j = 1,\ldots,d \}$$ a $d\times d$ nonnegative definite matrix, and fix integers $q_1 , \ldots , q_d\geq 1$. For any $n\geq 1$
and $i=1,\ldots,d$, let $f_i^{(n)}$ belong to $ L^2_s(\mu^{q_i})$. Define the sequence $\{F^{(n)}:n\geq 1\}$, according to (\ref{redriding}) and suppose that
\begin{equation} \label{covariance}
 \lim_{n\rightarrow \infty} \E[F^{(n)}_i F^{(n)}_j ]={\bf 1}_{(q_j=q_i)}\times\lim_{n\rightarrow \infty}\langle f_i^{(n)},f_j^{(n)}\rangle_{L^2(\mu^{q_i})} =C(i,j),\qquad 1\leq i,j \leq d.
\end{equation}
Assume moreover that the following Conditions 1--4 hold for every $k=1,...,d$:
\begin{enumerate}
  \item For every $n$, the kernel $f^{(n)}_k$ satisfies Assumptions A and B.
  \item For every $l=1,...,d$ and every $n$, the kernels $f^{(n)}_k$ and $f^{(n)}_l$ satisfy Assumption C.
  \item For every $r=1,\ldots,q_k$ and every $l=1,\ldots, r\wedge (q_k-1)$, one has
  that $$\|f^{(n)}_k \star_r^l f^{(n)}_k \|_{L^2(\mu^{2 q_k-r-l})} \rightarrow 0,$$ as $n\rightarrow \infty$.
  \item As $n\rightarrow \infty$, $\int_{Z^{q_k}}d\mu^{q_k}\left(f^{(n)}_k\right)^4 \rightarrow 0 $.
\end{enumerate}
Then, $ F^{(n)} $ converges to $X$ in distribution as $n\rightarrow \infty$. The speed of convergence can be assessed by combining the estimates of Proposition \ref{bound} and Proposition \ref{ineq_res2} either with Theorem \ref{ineq_2W} (when $C$ is positive definite) or with Theorem \ref{ineq_tala} (when $C$ is merely nonnegative definite).
\end{theorem}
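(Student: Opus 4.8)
The plan is to dominate the appropriate distance between the laws of $F^{(n)}$ and $X$ by the bounds already proved, and then to show that every term in those bounds vanishes as $n\to\infty$ under Conditions 1--4. By Remark \ref{RM : CVinLAW}, convergence in $d_2$ (resp. $d_3$) forces convergence in distribution, so it suffices to prove $d_2(F^{(n)},X)\to 0$ when $C$ is positive definite, using Theorem \ref{ineq_2W}, and $d_3(F^{(n)},X)\to 0$ when $C$ is merely nonnegative definite, using Theorem \ref{ineq_tala}. In both regimes the prefactors ($\|C^{-1}\|_{op}$, $\|C\|_{op}$, and the numerical constants) do not depend on $n$, and the two bounds share the same shape: a \emph{covariance} contribution governed by $\E[(C(i,j)-\langle DF_i^{(n)},-DL^{-1}F_j^{(n)}\rangle_{L^2(\mu)})^2]$ summed over $i,j$, and a \emph{residue} contribution governed by $\int_Z\mu(dz)\,\E[(\smd|D_zF_i^{(n)}|)^2(\smd|D_zL^{-1}F_i^{(n)}|)]$. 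The whole proof reduces to showing that each of these two contributions tends to $0$.

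For the covariance contribution I would apply Proposition \ref{bound} to every pair $(F_i^{(n)},F_j^{(n)})=(I_{q_i}(f_i^{(n)}),I_{q_j}(f_j^{(n)}))$ with $a=C(i,j)$. If $q_i\neq q_j$ then $C(i,j)=0$ by (\ref{covariance}) and the leading $a^2$ term disappears; if $q_i=q_j$ the extra term $(q_i!\langle f_i^{(n)},f_j^{(n)}\rangle_{L^2(\mu^{q_i})}-C(i,j))^2$ tends to $0$ directly by (\ref{covariance}). In either case the residual part of the bound in Proposition \ref{bound} is, via its last displayed inequality (which rests on Lemma \ref{lemma_ineq}), a finite linear combination of products $\|f_i^{(n)}\star^{q_i-t}_{q_i-s}f_i^{(n)}\|\times\|f_j^{(n)}\star^{q_j-t}_{q_j-s}f_j^{(n)}\|$ with numerical coefficients. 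The key point is that in each such product at least one factor is a genuine contraction integrating out at least one variable: after using the identity (\ref{useful}) to rewrite any non-integrated contraction as one in which a strictly positive number of variables is integrated out, this factor is exactly of the type controlled by Condition 3 and hence tends to $0$, while the companion factor is bounded --- it is either another such contraction (also tending to $0$) or it equals $\|f^{(n)}\|^2_{L^2}$, bounded because of (\ref{covariance}). Consequently every product vanishes, and the covariance contribution tends to $0$.

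For the residue contribution I would invoke Proposition \ref{ineq_res2}, which bounds it by a finite sum over $i$ of products of the bounded prefactor $\sqrt{(q_i-1)!\,\|f_i^{(n)}\|^2_{L^2(\mu^{q_i})}}$ with self-contraction norms $\|f_i^{(n)}\star_b^a f_i^{(n)}\|$, for $1\le b\le q_i$ and $0\le a\le b-1$. When $a\ge 1$ the index constraints $1\le a\le b\wedge(q_i-1)$ hold, so these tend to $0$ by Condition 3. When $a=0$ I would again apply (\ref{useful}): for $b<q_i$ this turns $\|f_i^{(n)}\star_b^0 f_i^{(n)}\|$ into a Condition 3 contraction $\|f_i^{(n)}\star_{q_i}^{q_i-b}f_i^{(n)}\|$ (with $q_i-b\ge1$), while for $b=q_i$ it is simply $(\int_{Z^{q_i}}(f_i^{(n)})^4\,d\mu^{q_i})^{1/2}$, which tends to $0$ by Condition 4. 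Hence every summand vanishes and the residue contribution tends to $0$.

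Putting the two contributions together shows $d_2(F^{(n)},X)\to 0$ (resp. $d_3$), so $F^{(n)}\to X$ in distribution; the quantitative rate claimed in the statement is simply the bound obtained by feeding Propositions \ref{bound} and \ref{ineq_res2} into Theorem \ref{ineq_2W} or Theorem \ref{ineq_tala}. I expect the main difficulty to be bookkeeping rather than conceptual: one must match the precise index ranges of the contractions produced by Propositions \ref{bound} and \ref{ineq_res2} to those controlled by Conditions 3 and 4, and in particular treat the non-integrated ($a=0$, resp.\ $l=0$) contractions via the identity (\ref{useful}), isolating the single genuinely new ingredient --- the $L^4$ norm --- that forces the separate Condition 4. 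The remaining estimates are routine applications of Cauchy--Schwarz together with the boundedness of the $L^2$-norms guaranteed by (\ref{covariance}).
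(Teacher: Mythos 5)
Your proposal is correct and follows exactly the paper's route: bound $d_3$ (or $d_2$ in the positive definite case) via Theorem \ref{ineq_tala} (resp.\ Theorem \ref{ineq_2W}), then kill the covariance term with Proposition \ref{bound} and the residue term with Proposition \ref{ineq_res2}, using Conditions 3--4 and (\ref{covariance}). In fact your index bookkeeping --- in particular the treatment of the non-integrated contractions via the identity of Lemma \ref{lemma_ineq}/(\ref{useful}) and the isolation of the $L^4$ term behind Condition 4 --- is carried out more explicitly than in the paper's own (very terse) proof.
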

\begin{proof}
By Theorem \ref{ineq_tala},
\begin{eqnarray}
d_3(F^{(n)},X) \leq \frac12 \sqrt{\sum_{i,j=1}^{d} \E[(C(i,j) - \langle  DF^{(n)}_i,-DL^{-1}F^{(n)}_j \rangle_{L^2(\mu)} )^2 ] } \label{part1}\\
+ \frac14 \int_Z
\mu(dz)\E\left[ \left(\smd|D_z F^{(n)}_i | \right)^2 \left(\smd|D_z L^{-1} F^{(n)}_i |  \right) \right],    \label{part2}
\end{eqnarray}
so that we need only show that, under the assumptions in the statement, both (\ref{part1}) and (\ref{part2}) tend to $0$ as $n\rightarrow \infty$. That (\ref{part1}) tends to $0$ is a direct consequence of the estimates in Proposition \ref{bound}, whereas Proposition \ref{ineq_res2} shows that (\ref{part2}) converges to $0$. This concludes the proof. \\

If the matrix $C$ is positive definite, then one could alternatively use Theorem \ref{ineq_2W} instead of Theorem \ref{ineq_tala}.
\end{proof}

\begin{remark}
\rm{ Apart from the asymptotic behavior of the covariances (\ref{covariance}) and the presence of Assumption C, the statement of Theorem \ref{CLT} does not contain any requirements on the joint distribution of the components of $F^{(n)}$. Besides the technical requirements in Condition 1 and Condition 2, the joint convergence of the random vectors $F^{(n)}$ only relies on the `one-dimensional' Conditions 3 and 4, which are the same as condition (II) and (III) in the statement of Theorem 5.1 in \cite{pstu}. See also Remark \ref{rmk2}.
}
\end{remark}

\end{subsection}
\end{section}

\begin{section}{Examples}
In what follows, we provide several explicit applications of the main estimates proved in the paper. In particular:
\begin{itemize}
  \item Section \ref{section_vector12} focuses on vectors of single and double integrals.
  \item Section \ref{section_OU_ex} deals with three examples of continuous-time functionals of Ornstein-Uhlenbeck L\'{e}vy processes.
\end{itemize}
\begin{subsection}{Vectors of single and double integrals} \label{section_vector12}
The following statement corresponds to Theorem \ref{ineq_2W}, in the special case
\begin{equation}\label{scriabin}
F=(F_1,\ldots,F_d) = (I_1(g_1),\ldots, I_1(g_m),I_2(h_1),\ldots, I_2(h_n)) .
\end{equation}
The proof, which is based on a direct computation of the general bounds proved in Theorem \ref{ineq_2W}, serves as a further illustration (in a simpler setting) of the techniques used throughout the paper. Some of its applications will be illustrated in Section \ref{section_OU_ex}.
\begin{proposition} \label{example12} Fix integers $n,m\geq 1$, let $d=n+m$, and let $C$ be a $d\times d$ nonnegative definite matrix. Let $X\sim \mathcal{N}_d(0,C)$.
Assume that the vector in (\ref{scriabin}) is such that
\begin{enumerate}
    \item the function $g_i$
belongs to $L^2(\mu) \cap L^3(\mu)$, for every $1\leq i \leq m$,
    \item the kernel $h_i \in L^2_s(\mu^2)$ ($1\leq i\leq n$) is such that: {\rm (a)} $h_{i_1} \star^1_2 h_{i_2} \in L^2(\mu^1) $, for $1\leq i_1,i_2 \leq n$, {\rm (b)} $h_i\in L^4(\mu^2) $ and {\rm (c)} the functions $|h_{i_1}| \star^1_2 |h_{i_2}|$, $|h_{i_1}| \star^0_2 |h_{i_2}|$ and $|h_{i_1}| \star^0_1 |h_{i_2}|$ are well defined and finite for every value of their arguments and for every $1\leq i_1,i_2 \leq n$, {\rm (d)} every pair $(h_i,\, h_j)$ verifies Assumption C, that in this case is equivalent to requiring that $$\int_Z\sqrt{\int_Z \mu(da) h^2_i(z,a)h^2_j(z,a)}\mu(dz)<\infty.$$
\end{enumerate}
Then,

\begin{eqnarray*}
d_{3}(F,X) &\leq&   \cfrac{1}{2} \sqrt{S_1 + S_2 + S_3}  +   S_4  \\
&\leq&    \cfrac{1}{2} \sqrt{S_1 + S_5 + S_6}  +   S_4
\end{eqnarray*}

where
\begin{eqnarray*}
S_1 &=& \sum_{i_1,i_2=1}^{m} (C(i_1,i_2)-\langle g_{i_1},g_{i_2} \rangle_{L^2(\mu)})^2  \\
S_2 &=& \sum_{j_1,j_2=1}^{n} (C(m+j_1,m+j_2)-2\langle h_{j_1} , h_{j_2} \rangle_{L^2(\mu^2)})^2 + 4\|h_{j_1} \star_2^1 h_{j_2}  \|^2_{L^2(\mu)}
+ 8 \|h_{j_1} \star_1^1 h_{j_2}\|^2_{L^2(\mu^2)} \\
S_3 &=& \sum_{i=1}^{m} \sum_{j=1}^{n} 2C(i,m+j)^2 + 5\|g_i \star_1^1 h_{j}\|^2_{L^2(\mu)} \\
S_4 &=& m^2\sum_{i=1}^{m} \|g_i\|^3_{L^3(\mu)} + 8n^2 \sum_{j=1}^{n} \|h_j\|_{L^2(\mu^2)} (\|h_j\|^2_{L^4(\mu^2)}+\sqrt{2}\|h_{j_1} \star_1^0 h_{j_1}  \|_{L^2(\mu^3)}  )  \\
S_5 &=& \sum_{j_1,j_2=1}^{n} (C(m+j_1,m+j_2)-2\langle h_{j_1} , h_{j_2} \rangle_{L^2(\mu^2)})^2
+ 4 \|h_{j_1} \star_1^0 h_{j_1}  \|_{L^2(\mu^3)} \times
\|h_{j_2} \star_1^0 h_{j_2}  \|_{L^2(\mu^3)}  \\
& &+ 8 \|h_{j_1} \star_1^1 h_{j_1}  \|_{L^2(\mu^2)} \times \|h_{j_2} \star_1^1 h_{j_2}  \|_{L^2(\mu^2)} \\
S_6 &=& \sum_{i=1}^{m} \sum_{j=1}^{n} 2C(i,m+j)^2 + 5 \|g_{i}\|^2_{L^2(\mu)} \times \|h_{j} \star_1^1 h_{j}  \|_{L^2(\mu^2)} \\
\end{eqnarray*}
\end{proposition}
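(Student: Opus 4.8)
The plan is to specialize the interpolation bound of Theorem~\ref{ineq_tala} to the vector $F$ in (\ref{scriabin}) and to evaluate the two resulting terms explicitly. Writing $d=n+m$ and applying Theorem~\ref{ineq_tala} reduces everything to estimating
$$T := \sum_{i,j=1}^d \E\big[(C(i,j)-\langle DF_i,-DL^{-1}F_j\rangle_{L^2(\mu)})^2\big] \quad\text{and}\quad R:=\int_Z\mu(dz)\,\E\Big[\Big(\smd|D_zF_i|\Big)^2\Big(\smd|D_zL^{-1}F_i|\Big)\Big],$$
since $d_3(F,X)\le \tfrac12\sqrt T+\tfrac14 R$. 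Throughout I would use the two elementary identities $D_zI_p(f)=pI_{p-1}(f(z,\cdot))$ and $-D_zL^{-1}I_q(g)=\tfrac1q D_zI_q(g)=I_{q-1}(g(z,\cdot))$; in particular, for the single integrals $D_zI_1(g)=g(z)$ and $-D_zL^{-1}I_1(g)=g(z)$ are \emph{deterministic}, which is what makes the single-integral contributions cheap.

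For $T$ I would split the double sum into three blocks according to the partition of the indices into $\{1,\dots,m\}$ (single integrals) and $\{m+1,\dots,m+n\}$ (double integrals). In the single--single block $\langle DF_i,-DL^{-1}F_j\rangle_{L^2(\mu)}=\langle g_i,g_j\rangle_{L^2(\mu)}$ is a constant, so each summand equals $(C(i,j)-\langle g_i,g_j\rangle_{L^2(\mu)})^2$ and the block sums to $S_1$. For the double--double block I would apply Proposition~\ref{bound} with $p=q=2$, and for the two mixed blocks Proposition~\ref{bound} with $(p,q)=(1,2)$ and with $(p,q)=(2,1)$: in each case the summand is bounded by the corresponding $a^2$ (resp.\ $(p!\langle f,g\rangle-a)^2$) term plus squared norms of symmetrized contractions $\widetilde{f\star_t^s g}$. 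Passing from $\|\widetilde{\cdot}\|$ to $\|\cdot\|$ via (\ref{ineq_sym}) and collecting the binomial constants (all small since $p,q\in\{1,2\}$) yields exactly $S_2$ for the double--double block, and $S_3$ once the two orderings $(i,m+j)$ and $(m+j,i)$ are added --- this is the origin of the coefficients $2$ and $5$ in $S_3$ (namely $C(i,m+j)^2+C(m+j,i)^2$ and $p^2=1$ from $(1,2)$ plus $p^2=4$ from $(2,1)$). The alternative bound $S_1+S_5+S_6$ then follows by invoking instead the second (product) inequality of Proposition~\ref{bound}, i.e.\ Lemma~\ref{lemma_ineq}(2), which replaces each mixed norm $\|h_{j_1}\star_r^l h_{j_2}\|$ by the product $\|h_{j_1}\star_r^l h_{j_1}\|\,\|h_{j_2}\star_r^l h_{j_2}\|$ of ``diagonal'' contractions (and similarly $\|g_i\star_1^1 h_j\|$ by $\|g_i\|_{L^2(\mu)}\,\|h_j\star_1^1 h_j\|$).

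Finally, for the residual $R$ I would use $|D_zL^{-1}F_i|=\tfrac1{q_i}|D_zF_i|\le|D_zF_i|$ and separate the single- from the double-integral derivatives: the deterministic single-integral part is estimated directly, each contributing $\int_Z|g_i(z)|^3\mu(dz)=\|g_i\|_{L^3(\mu)}^3$, while the double-integral part is controlled through the third absolute moment of the first-chaos variables $2I_1(h_j(z,\cdot))$. The latter is precisely the quantity bounded in \cite[Theorem~4.2, formulae (4.13) and (4.18)]{pstu} and underlying Proposition~\ref{ineq_res2}; integrating it against $\mu(dz)$ produces the contraction-norm expressions $\|h_j\|_{L^4(\mu^2)}^2$ and $\|h_j\star_1^0 h_j\|_{L^2(\mu^3)}$ in $S_4$, and the combinatorial prefactors arise from an elementary inequality of the type $(\sum_{i=1}^k a_i)^3\le k^2\sum_{i=1}^k a_i^3$ applied separately to the $m$ single and $n$ double summands. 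I expect this last step to be the main obstacle: in contrast to $T$, which is essentially bookkeeping on top of Proposition~\ref{bound}, the residual forces one to control third moments of Poisson first-chaos integrals and to keep track of the cross terms between the single and double blocks, so that importing and specializing the sharp moment estimates of \cite{pstu} is unavoidable and is where the constants in $S_4$ are genuinely fixed.
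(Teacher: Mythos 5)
Your proposal is correct and follows essentially the same route as the paper: apply Theorem~\ref{ineq_tala}, evaluate the covariance term block by block using $D_zI_1(g)=g(z)$, $-D_zL^{-1}I_2(h)=I_1(h(z,\cdot))$ and the product formula, pass to $S_5,S_6$ via Lemma~\ref{lemma_ineq}, and control the residual through third moments of the first-chaos variables $I_1(h_j(z,\cdot))$. The only (immaterial) difference is that the paper recomputes the low-order quantities directly -- e.g.\ the exact identity $\E\big[\int_Z|I_1(h(z,\cdot))|^4\mu(dz)\big]=2\|h\star_2^1h\|^2_{L^2(\mu)}+\|h\|^4_{L^4(\mu^2)}$ after a Cauchy--Schwarz step, and the elementary bound $(a+2b)^2(a+b)\le 4a^3+32b^3$ to separate the single- and double-integral blocks -- rather than specializing the general Propositions~\ref{bound} and~\ref{ineq_res2} as you suggest; both yield the same constants here.
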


\begin{proof}
Assumptions 1 and 2 in the statement ensure that each integral appearing in the proof is well-defined, and that the use of Fubini arguments is justified. In view of Theorem \ref{ineq_tala}, our strategy is to study the quantities in line (\ref{unotrino}) and line (\ref{unotrino2}) separately. On the one
hand, we know that:
for $1\leq i\leq m, 1\leq j \leq n$,
$$ D_z I_1(g_i(\cdot)) = g_i(z) , \qquad -D_z L^{-1} I_1(g_i(\cdot)) =g_i(z) $$
$$ D_z I_2(h_j(\cdot,\cdot)) = 2I_1(h_j(z,\cdot)) , \qquad -D_z L^{-1} I_2(h_j(\cdot,\cdot)) = I_1(h_j(z,\cdot)) $$

\noindent Then, for any given constant $a$, we have:
\begin{description}
\item[--] for $1\leq i\leq m, 1\leq j \leq n$,
$$ \E[(a-\langle D_z I_1(g_{i_1}) ,-D_z L^{-1} I_1(g_{i_2}) \rangle)^2] = (a-\langle g_{i_1},g_{i_2} \rangle_{L^2(\mu)})^2 ;$$
\item[--] for $ 1\leq j_1,j_2 \leq n $,
\begin{eqnarray*}
& & \E[(a-\langle D_z I_2(h_{j_1}),-D_z L^{-1} I_2(h_{j_2}) \rangle)^2] \\
&=& (a-2\langle h_{j_1} , h_{j_2} \rangle_{L^2(\mu^2)})^2 + 4\|h_{j_1} \star_2^1 h_{j_2}  \|^2_{L^2(\mu)}
+ 8 \|h_{j_1} \star_1^1 h_{j_2}\|^2_{L^2(\mu^2)} ;
\end{eqnarray*}
\item[--] for $  1\leq i \leq m,  1\leq j \leq n$,
$$\E[(a-\langle D_z I_2(h_j) ,-D_z L^{-1} I_1(g_i) \rangle)^2]
=a^2 + 4\|g_i \star_1^1 h_{j}\|^2_{L^2(\mu)}$$
$$\E[(a-\langle D_z I_1(g_i) ,-D_z L^{-1} I_2(h_j) \rangle)^2]
=a^2 + \|g_i \star_1^1 h_{j}\|^2_{L^2(\mu)} .$$
\end{description}
So
$$
(\ref{unotrino})= \cfrac{1}{2}
\sqrt{S_1 + S_2 + S_3}
$$
where
$S_1 , S_2 , S_3$ are defined as in the statement of proposition. \\

\noindent On the other hand,
$$\left(\sum_{i=1}^{2}|D_z F_i | \right)^2 = \left(\sum_{i=1}^{m} |g_i(z)|+
2 \sum_{j=1}^{n} |I_1(h_j(z,\cdot))| \right)^2, $$
$$ \smd|D_z L^{-1} F_i|  = \sum_{i=1}^{m} |g_i(z)|+ \sum_{j=1}^{n} |I_1(h_j(z,\cdot))| .$$
As the following inequality holds for all positive reals $a,b$:
$$(a+2b)^2(a+b) \leq (a+2b)^3 \leq 4a^3+32b^3 ,$$
we have,
\begin{eqnarray*}
& & \E\left[\left(\smd|D_z F_i | \right)^2 \left(\smd|D_z L^{-1} F_i |  \right) \right] \\
&=& \E\left[\left(\sum_{i=1}^{m} |g_i(z)|+
2 \sum_{j=1}^{n} |I_1(h_j(z,\cdot))|\right)^2
 \left(\sum_{i=1}^{m} |g_i(z)|+ \sum_{j=1}^{n} |I_1(h_j(z,\cdot))| \right)  \right]
\\
&\leq& \E \left[4 \left(\sum_{i=1}^{m} |g_i(z)| \right)^3+
32 \left(\sum_{j=1}^{n} |I_1(h_j(z,\cdot))| \right)^3 \right] \\
&\leq& \E[4m^2 \sum_{i=1}^{m} |g_i(z)|^3+
32n^2 \sum_{j=1}^{n} |I_1(h_j(z,\cdot))|^3] .
\end{eqnarray*}
By applying the Cauchy-Schwarz inequality, one infers that
$$\int_Z \mu(dz)\E[|I_1(h(z,\cdot))|^3] \leq \sqrt{\E \left[\int_Z \mu(dz) |I_1(h(z,\cdot))|^4 \right] }
\times \|h\|_{L^2(\mu^2)} .$$
 Notice that
 $$ \E \left[\int_Z \mu(dz) |I_1(h(z,\cdot))|^4 \right] = 2\|h\star^1_2h\|^2_{L^2(\mu)} + \|h\|^4_{L^4(\mu^2)} $$
 We have
\begin{eqnarray*}
(\ref{unotrino2})&=& \cfrac{1}{4}m^2 \|C^{-1}\|^{3/2}_{op}
\|C\|_{op} \int_Z \mu(dz)\E\left[\left(\smd|D_z
F_i | \right)^2 \left(\smd|D_z L^{-1} F_i |  \right) \right] \\
&\leq&  \|C^{-1}\|^{3/2}_{op} \|C\|_{op} \Big(m^2\sum_{i=1}^{m} \|g_i\|^3_{L^3(\mu)} \\
& &+ 8n^2 \sum_{j=1}^{n} \|h_j\|_{L^2(\mu^2)} (\|h_j\|^2_{L^4(\mu^2)}+\sqrt{2}\|h_j\star_2^1 h_j\|_{L^2(\mu)}) \Big) \\
&=&  \|C^{-1}\|^{3/2}_{op} \|C\|_{op}  S_4
\end{eqnarray*}
We will now apply Lemma \ref{lemma_ineq} to further assess some of the summands appearing the definition of $S_2$,$S_3$. Indeed,
\begin{description}
\item[--] for $1\leq j_1,j_2 \leq n $,
$$\|h_{j_1} \star_2^1 h_{j_2}  \|^2_{L^2(\mu)}   \leq   \|h_{j_1} \star_1^0 h_{j_1}  \|_{L^2(\mu^3)} \times \|h_{j_2} \star_1^0 h_{j_2}  \|_{L^2(\mu^3)}    $$
$$ \|h_{j_1} \star_1^1 h_{j_2}\|^2_{L^2(\mu^2)}   \leq  \|h_{j_1} \star_1^1 h_{j_1}  \|_{L^2(\mu^2)} \times \|h_{j_2} \star_1^1 h_{j_2}  \|_{L^2(\mu^2)}  ; $$
\item[--] for $  1\leq i \leq m, 1\leq j \leq n$,
$$ \|g_i \star_1^1 h_{j}\|^2_{L^2(\mu)}   \leq  \|g_{i}\|^2_{L^2(\mu)} \times \|h_{j} \star_1^1 h_{j}  \|_{L^2(\mu^2)}   $$
by using the equality $\|g_{i}^{(k)}\star_0^0 g_{i}^{(k)}\|^2_{L^2(\mu^2)} = \|g_{i}^{(k)}\|^4_{L^2(\mu)} $.\\
\end{description}
Consequently,
\begin{eqnarray*}
S_2 &\leq& \sum_{j_1,j_2=1}^{n} (C(m+j_1,m+j_2)-2\langle h_{j_1} , h_{j_2} \rangle_{L^2(\mu^2)})^2
+ 4\|h_{j_1} \star_1^0 h_{j_1}  \|_{L^2(\mu^3)}
\times \|h_{j_2} \star_1^0 h_{j_2}  \|_{L^2(\mu^3)}  \\
& &+ 8\|h_{j_1} \star_1^1 h_{j_1}  \|_{L^2(\mu^2)} \times \|h_{j_2} \star_1^1 h_{j_2}  \|_{L^2(\mu^2)} \\
&=& S_5 , \\
S_3 &\leq&  \sum_{i=1}^{m} \sum_{j=1}^{n} 2C(i,m+j)^2 + 5\|g_{i}\|^2_{L^2(\mu)}\times \|h_{j} \star_1^1 h_{j}  \|_{L^2(\mu^2)} \\
&=& S_6
\end{eqnarray*}
\end{proof}\\

\begin{remark} \rm{
If the matrix $C$ is positive definite, then we have
\begin{eqnarray*}
d_{2}(F,X) &\leq&   \|C^{-1}\|_{op} \|C\|_{op}^{1/2} \sqrt{S_1 + S_2 + S_3}  +   \cfrac{\sqrt{2\pi}}{2}  \|C^{-1}\|^{3/2}_{op} \|C\|_{op}  S_4  \\
&\leq&    \|C^{-1}\|_{op} \|C\|_{op}^{1/2} \sqrt{S_1 + S_5 + S_6}  +   \cfrac{\sqrt{2\pi}}{2}  \|C^{-1}\|^{3/2}_{op} \|C\|_{op}  S_4
\end{eqnarray*}
by using Theorem \ref{ineq_2W}.
}
\end{remark}

The following result can be proved by means of Proposition \ref{example12}, or as a particular case of Theorem \ref{CLT}.
\begin{corollary} \label{CLT_cor}
Let $d=m+n$, with $m,n\geq 1$ two integers . Let $X_C \sim \N_d(0,C)$ be a centered $d$-dimensional Gaussian vector, where $C=\{C(s,t):s,t = 1,\ldots,d \}$ is a $d\times d$  nonnegative definite matrix such that $$C(i,j+m)=0,\qquad \forall 1\leq i \leq m, 1\leq j \leq n. $$
Assume that
$$F^{(k)}=(F_1^{(k)},\ldots,F_d^{(k)}) := (I_1(g_1^{(k)}),\ldots, I_1(g_m^{(k)}),I_2(h_1^{(k)}),\ldots, I_2(h_n^{(k)}))$$
where for all $k$, the kernels $g_1^{(k)},\ldots,g_m^{(k)}$ and
$h_1^{(k)},\ldots,h_n^{(k)} $ satisfy respectively the technical Conditions 1 and 2 in Proposition \ref{example12} .  Assume moreover that the following conditions hold for each $k\geq 1$:
\begin{enumerate}
  \item
  $$\lim_{k\rightarrow \infty} \E[F^{(k)}_s F^{(k)}_t ]=C(s,t),\qquad 1\leq s,t \leq d.$$\\
      or equivalently
      $$ \lim_{k\rightarrow \infty} \langle g^{(k)}_{i_1},g^{(k)}_{i_2} \rangle_{L^2(\mu)} = C(i_1,i_2),
      \qquad 1\leq i_1,i_2 \leq m,$$
      $$ \lim_{k\rightarrow \infty} 2\langle h^{(k)}_{j_1} , h^{(k)}_{j_2} \rangle_{L^2(\mu^2)} = C(m+j_1,m+j_2),
      \qquad 1\leq j_1,j_2 \leq n .$$
  \item For every $i=1,\ldots,m$ and every $j=1,\ldots, n$, one has
the following conditions are satisfied as $k\rightarrow \infty$: \\
(a) $\|g_i^{(k)}\|^3_{L^3(\mu)} \rightarrow 0$; (b) $\|h_j^{(k)}\|^2_{L^4(\mu^2)} \rightarrow 0$;\\
(c) $\|h_j^{(k)}\star_2^1 h_j^{(k)}\|_{L^2(\mu)}
= \|h_j^{(k)} \star_1^0 h_j^{(k)}  \|_{L^2(\mu^3)} \rightarrow 0$; \\
(d) $\|h_{j}^{(k)} \star_1^1 h_{j}^{(k)}  \|^2_{L^2(\mu^2)}
   \rightarrow 0$.
\end{enumerate}
Then $ F^{(k)} \rightarrow X $ in law, as $k\rightarrow \infty$. An explicit bound on the speed of convergence in the distance $d_3$ is provided by Proposition \ref{example12}.
\end{corollary}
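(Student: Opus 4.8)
The plan is to derive the corollary directly from the explicit bound established in Proposition \ref{example12}. Each $F^{(k)}$ has exactly the form (\ref{scriabin}), and by hypothesis the kernels $g_1^{(k)},\ldots,g_m^{(k)}$ and $h_1^{(k)},\ldots,h_n^{(k)}$ satisfy Conditions 1 and 2 of that proposition for every $k$, so its conclusion applies in the sharper form
\[
d_{3}(F^{(k)},X) \leq \tfrac{1}{2}\sqrt{S_1 + S_5 + S_6} + S_4,
\]
where $S_1,S_4,S_5,S_6$ are the quantities defined there, computed from the kernels $g_i^{(k)},h_j^{(k)}$. By Remark \ref{RM : CVinLAW}, it suffices to show that each of these four terms tends to $0$ as $k\to\infty$; the announced convergence in law then follows, with the speed of convergence read off from the displayed inequality. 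Thus the whole argument reduces to a term-by-term inspection.

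First I would dispose of $S_1$ and $S_4$. The term $S_1=\sum_{i_1,i_2}(C(i_1,i_2)-\langle g_{i_1}^{(k)},g_{i_2}^{(k)}\rangle_{L^2(\mu)})^2$ vanishes in the limit by Condition 1. For $S_4$, the first block disappears because $\|g_i^{(k)}\|_{L^3(\mu)}^3\to 0$ (Condition 2(a)); for the second block one notes that $\|h_j^{(k)}\|_{L^2(\mu^2)}$ stays bounded, since by Condition 1 it converges to $\sqrt{C(m+j,m+j)/2}$, while the factor $\|h_j^{(k)}\|_{L^4(\mu^2)}^2+\sqrt{2}\,\|h_j^{(k)}\star_1^0 h_j^{(k)}\|_{L^2(\mu^3)}$ tends to $0$ by Conditions 2(b) and 2(c). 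A product of a bounded sequence and a vanishing one vanishes, so $S_4\to 0$.

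Next I would treat $S_5$ and $S_6$. In $S_5$ the covariance discrepancy $(C(m+j_1,m+j_2)-2\langle h_{j_1}^{(k)},h_{j_2}^{(k)}\rangle_{L^2(\mu^2)})^2\to 0$ by Condition 1, and the two remaining products of contraction norms vanish because $\|h_j^{(k)}\star_1^0 h_j^{(k)}\|_{L^2(\mu^3)}\to 0$ (Condition 2(c)) and $\|h_j^{(k)}\star_1^1 h_j^{(k)}\|_{L^2(\mu^2)}\to 0$ (Condition 2(d)). For $S_6$ the decisive point is the structural hypothesis $C(i,j+m)=0$, which annihilates the summand $2C(i,m+j)^2$ identically; the remaining summand $5\,\|g_i^{(k)}\|_{L^2(\mu)}^2\,\|h_j^{(k)}\star_1^1 h_j^{(k)}\|_{L^2(\mu^2)}$ is again (bounded)$\times$(vanishing), since $\|g_i^{(k)}\|_{L^2(\mu)}^2\to C(i,i)$ and Condition 2(d) applies. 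Hence $S_5,S_6\to 0$, which completes the reduction.

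I do not expect a genuine obstacle, as the content is bookkeeping; the only points requiring care are (i) recording that every non-vanishing factor (the $L^2$ norms of $g_i^{(k)}$ and $h_j^{(k)}$) is bounded because it converges under Condition 1, so that the vanishing of the paired contraction norms suffices, and (ii) invoking the norm identity $\|h_j^{(k)}\star_2^1 h_j^{(k)}\|_{L^2(\mu)}=\|h_j^{(k)}\star_1^0 h_j^{(k)}\|_{L^2(\mu^3)}$ of Lemma \ref{lemma_ineq} to match Condition 2(c) with the norms appearing in $S_4$ and $S_5$. I would also remark that $C(i,j+m)=0$ is not an extra restriction but is forced by the orthogonality of Wiener chaoses of distinct orders, since $\E[I_1(g_i^{(k)})I_2(h_j^{(k)})]=0$ for every $k$; this is precisely why no further control of cross-contractions between the $g_i$ and the $h_j$ is required. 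The same conclusion can alternatively be obtained by specializing Theorem \ref{CLT} to $q_1=\cdots=q_m=1$ and $q_{m+1}=\cdots=q_d=2$, its Conditions 3--4 reducing to the contraction and moment conditions above; the route through Proposition \ref{example12} is preferable, however, since it reproduces the stated single-integral requirement $\|g_i^{(k)}\|_{L^3(\mu)}^3\to 0$ directly.
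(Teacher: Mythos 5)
Your proposal is correct and follows exactly the route the paper indicates (the paper itself gives no detailed proof, stating only that the corollary follows from Proposition \ref{example12} or as a special case of Theorem \ref{CLT}); your term-by-term verification that $S_1$, $S_4$, $S_5$, $S_6$ vanish under Conditions 1--2, together with the observation that $C(i,m+j)=0$ kills the cross terms in $S_6$ and is in any case forced by orthogonality of chaoses, supplies precisely the omitted bookkeeping. The only quibble is terminological: the bound $\frac{1}{2}\sqrt{S_1+S_5+S_6}+S_4$ is the \emph{weaker} (larger) of the two bounds in Proposition \ref{example12}, not the ``sharper'' one, but this has no bearing on the argument.
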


\end{subsection}

\begin{subsection}{Vector of functionals
of Ornstein-Uhlenbeck processes} \label{section_OU_ex}
In this section, we study CLTs for some functionals of Ornstein-Uhlenbeck Lévy process. These processes have been intensively studied in recent years, and applied to various domains such as e.g. mathematical finance (see \cite{bnshep}) and non-parametric Bayesian survival analysis (see e.g. \cite{bpp, pp}). Our results are multi-dimensional generalizations of the content of \cite[Section 7]{pstu} and \cite[Section 4]{pt}.

We denote by $\hat{N}$ a centered Poisson measure over $\R \times \R$, with control measure given by
$\nu(du) $, where $\nu(\cdot)$ is positive, non-atomic and $\sigma$-finite. For all positive real number $\lambda$, we define the stationary \emph{Ornstein-Uhlenbeck Lévy process} with parameter $\lambda$ as
$$ Y_t^{\lambda} = I_1(f_t^{\lambda}) = \sqrt{2\lambda} \int_{-\infty}^{t} \int_{\R} u \exp(-\lambda(t-x)) \hat{N}(du,dx) ,\quad t\geq 0$$
where $f_t^{\lambda}(u,x) = \sqrt{2\lambda} \1_{( -\infty,t]}(x) u \exp(-\lambda(t-x)) $. We make the following technical assumptions on the measure $\nu$: $\int_{\R} u^j \nu(du)<\infty $ for $j=2,3,4,6$, and $\int_{\R} u^2 \nu(du)=1 $, to ensure among other things that $Y_t^{\lambda}$ is well-defined. These assumptions yield in particular that
$$ \textbf{Var}(Y_t^{\lambda}) =\E[(Y_t^{\lambda})^2] =2 \lambda \int_{-\infty}^{t} \int_{\R} u^2 \exp(-2\lambda(t-x)) \nu(du)dx = 1
$$
We shall obtain Central Limit Theorems for three kind of functionals of Ornstein-Uhlenbeck Lévy processes. In particular, each of the forthcoming examples corresponds to a ``realized empirical moment'' (in continuous time) associated with $Y^{\lambda}$, namely: Example 1 corresponds to an asymptotic study of the mean, Example 2 concerns second moments, whereas Example 3 focuses on joint second moments of shifted processes.\\

Observe that all kernels considered in the rest of this section automatically satisfy our Assumptions A, B and C.\\

\noindent\textbf{Example 1 (Empirical Means)} \\
We first recall the definition of \textbf{Wasserstein} distance.
\begin{definition}
 The {\bf Wasserstein distance} between the laws of
  two $\Rd$-valued random vectors $X$ and $Y$ with $\E \|X\|_{\Rd}$,$\E \|Y\|_{\Rd}<\infty$,  written $d_{w}(X,Y)$,
  is given by
  $$ d_{w}(X,Y) = \sup_{g\in \mathcal{H}}|\E[g(X)]-\E[g(Y)]|, $$
  where $\mathcal{H}$ indicates the collection of all functions $g\in \C^1(\Rd)$
  such that $\|g\|_{Lip}\leq 1$.
\end{definition}

We define the functional $A(T,\lambda)$ by $A(T,\lambda)= \cfrac{1}{\sqrt{T}} \int_0^T Y_t^{\lambda} dt $. We recall the following limit theorem for $A(T,\lambda)$ , taken from Example 3.6 in \cite{pstu}.
\begin{theorem}
As $ T \rightarrow \infty $,
$$ \cfrac{A(T,\lambda)}{\sqrt{2/\lambda}}= \cfrac{1}{\sqrt{2T/\lambda}} \int_0^T Y_t^{\lambda} dt \overset{(law)}{\longrightarrow} X \sim \N(0,1) , $$
and there exists a constant $0<\alpha(\lambda)<\infty$, independent of $T$ and such that
$$ d_w \left(\cfrac{A(T,\lambda)}{\sqrt{2/\lambda}} , X \right)
\leq \cfrac{\alpha(\lambda)}{\sqrt{T}} .$$
\end{theorem}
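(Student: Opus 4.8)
The plan is to represent the normalized functional as a single Wiener--It\^o integral and then apply the one-dimensional Poisson Malliavin--Stein bound for the Wasserstein distance (the $d=1$ estimate recorded in Line 3 of Table 1, originally from \cite{pstu}). First I would invoke a stochastic Fubini theorem to write $A(T,\lambda)=I_1(g_T)$, where
$$ g_T(u,x) = \frac{1}{\sqrt T}\int_0^T f_t^{\lambda}(u,x)\, dt = \frac{\sqrt{2\lambda}}{\sqrt T}\, u \int_0^T \1_{(-\infty,t]}(x)\exp(-\lambda(t-x))\, dt. $$
Carrying out the inner $t$-integration yields an explicit kernel $H_T(x)$, bounded by $1/\lambda$ and supported on $\{x\le T\}$, so that after normalization by $\sqrt{2/\lambda}$ the kernel factorizes as $\tilde g_T(u,x)=\frac{\lambda}{\sqrt T}\,u\,H_T(x)$. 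Writing $F_T := A(T,\lambda)/\sqrt{2/\lambda}=I_1(\tilde g_T)$, the crucial simplification is that for a first-chaos variable one has $D_z F_T=\tilde g_T(z)$ and $-D_zL^{-1}F_T=\tilde g_T(z)$; hence both the inner-product term and the cubic remainder become deterministic and the abstract bound collapses to
$$ d_w(F_T,X) \leq \big|\,1-\|\tilde g_T\|^2_{L^2(\mu)}\,\big| + \|\tilde g_T\|^3_{L^3(\mu)}. $$

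For the first summand I would compute the variance exactly. Using the isometry together with the stationary covariance $\E[Y_s^{\lambda} Y_t^{\lambda}]=\langle f_s^{\lambda},f_t^{\lambda}\rangle_{L^2(\mu)}=\exp(-\lambda|s-t|)$ (which follows from $\int_{\R} u^2\,\nu(du)=1$ by a short direct computation), a double time-integral gives
$$ \|\tilde g_T\|^2_{L^2(\mu)} = \frac{\lambda}{2}\,\E[A(T,\lambda)^2] = 1 - \frac{1-\exp(-\lambda T)}{\lambda T}, $$
so that $\big|1-\|\tilde g_T\|^2_{L^2(\mu)}\big|\leq 1/(\lambda T)=O(1/T)$, which is of smaller order than the target rate and hence harmless.

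The real work is in the cubic term, and this is where I expect the main obstacle. The factorization of the kernel gives
$$ \|\tilde g_T\|^3_{L^3(\mu)} = \frac{\lambda^3}{T^{3/2}}\Big(\int_{\R} |u|^3\,\nu(du)\Big)\Big(\int_{\R} |H_T(x)|^3\, dx\Big), $$
where $\int_{\R}|u|^3\,\nu(du)$ is finite precisely by the hypothesis $\int_{\R} u^j\,\nu(du)<\infty$ for $j=3$. The delicate point is the spatial integral: on the bounded region $0<x\le T$ the profile $H_T$ stays close to the constant $1/\lambda$, so $\int_{\R} |H_T(x)|^3\,dx = T/\lambda^3 + O(1)$ grows \emph{linearly} in $T$ (the contribution of $\{x\le 0\}$ being uniformly bounded). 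Consequently the two powers of $T$ combine as $T^{-3/2}\cdot T = T^{-1/2}$, yielding $\|\tilde g_T\|^3_{L^3(\mu)}=O(1/\sqrt T)$ with an explicit coefficient depending only on $\lambda$ and $\int_{\R}|u|^3\,\nu(du)$. Adding the two contributions and absorbing the faster $O(1/T)$ term produces a finite constant $\alpha(\lambda)$ with $d_w(F_T,X)\le \alpha(\lambda)/\sqrt T$, which in turn forces convergence in law to $X\sim\N(0,1)$. The care required is chiefly in justifying the stochastic Fubini step and in confirming that the leading order of the spatial integral is exactly linear in $T$ (and not of lower order), since this is what pins down the optimal $1/\sqrt T$ rate.
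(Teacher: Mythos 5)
Your proposal is correct and follows essentially the same route as the paper (which recalls this one-dimensional statement from \cite{pstu} and proves its multi-dimensional analogue by exactly this scheme): stochastic Fubini to write the normalized functional as a single Wiener--It\^o integral, the first-chaos collapse of the Malliavin--Stein Wasserstein bound to $|1-\|\tilde g_T\|^2_{L^2(\mu)}|+\|\tilde g_T\|^3_{L^3(\mu)}$, the exact variance computation giving an $O(1/T)$ remainder, and the $L^3$-norm asymptotics $\sim T^{-1/2}$. The only cosmetic remark is that the finiteness of $\int_{\R}|u|^3\,\nu(du)$ is most cleanly obtained from the second and fourth moment assumptions via Cauchy--Schwarz rather than from the (possibly signed) third-moment hypothesis alone.
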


Here, we present a multi-dimensional generalization of the above result.
\begin{theorem}
For $\lambda_1,\ldots,\lambda_d >0$, as $T\rightarrow \infty$,
\begin{equation}
 \bar{A}(T)=(A(T,\lambda_1),\ldots,A(T,\lambda_d)) \overset{(law)}{\longrightarrow} X_B ,
\end{equation}
where $X_B$ is a centered $d$-dimensional Gaussian vector with covariance matrix $B=(B_{ij})_{d\times d} $, with
$B_{ij}= 2/\sqrt{\lambda_i \lambda_j}\, ,\, 1\leq i,j\leq d  $.  Moreover, there exists a constant $0<\alpha=\alpha(\bar{\lambda})=\alpha(\lambda_1,\ldots,\lambda_d)<\infty$, independent of $T$ and such that
$$ d_3(\bar{A}(T) , X_B)
\leq \cfrac{\alpha(\bar{\lambda})}{\sqrt{T}} .$$
\end{theorem}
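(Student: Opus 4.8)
The plan is to exploit the fact that each component of $\bar{A}(T)$ lives in the first Wiener chaos of $\hat{N}$, so that the whole problem collapses to a single application of Corollary \ref{Cor2}. First I would use a stochastic Fubini argument to write, for each $i$,
$$ A(T,\lambda_i) = \frac{1}{\sqrt T}\int_0^T I_1(f_t^{\lambda_i})\,dt = I_1(g_{T,i}), \qquad g_{T,i}(u,x) = \frac{1}{\sqrt T}\int_0^T f_t^{\lambda_i}(u,x)\,dt, $$
the interchange being justified in $L^2(\p)$ because $t\mapsto f_t^{\lambda_i}$ is an $L^2(\mu)$-valued Bochner integrable map on $[0,T]$. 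Evaluating the inner integral produces an explicit kernel $g_{T,i}(u,x) = \tfrac{\sqrt{2\lambda_i}}{\sqrt T\,\lambda_i}\,u\,h_T^{\lambda_i}(x)$, where $h_T^{\lambda_i}(x)=e^{\lambda_i x}(1-e^{-\lambda_i T})$ for $x\le 0$, $=1-e^{-\lambda_i(T-x)}$ for $0\le x\le T$, and $=0$ for $x>T$. A point worth stressing at the outset is that the target matrix $B_{ij}=2/\sqrt{\lambda_i\lambda_j}$ equals $2\,v v^{T}$ with $v_i=\lambda_i^{-1/2}$, hence has rank one and is merely nonnegative definite; this is exactly why we must invoke the interpolation bound (Corollary \ref{Cor2}, distance $d_3$) rather than the Malliavin--Stein bound, which requires a positive definite covariance.

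Applying Corollary \ref{Cor2} to the first-chaos vector $\bar{A}(T)=(I_1(g_{T,1}),\dots,I_1(g_{T,d}))$ with covariance matrix $K_T$ gives
$$ d_3(\bar{A}(T),X_B)\le \tfrac12\|B-K_T\|_{H.S.} + \tfrac{d^2}{4}\sum_{i=1}^d\int_Z |g_{T,i}(z)|^3\,\mu(dz), $$
so it suffices to bound each term by a constant times $T^{-1/2}$. For the covariance I would write $K_T(i,j)=\langle g_{T,i},g_{T,j}\rangle_{L^2(\mu)}=\tfrac1T\int_0^T\!\!\int_0^T \E[Y_s^{\lambda_i}Y_t^{\lambda_j}]\,ds\,dt$, and use the isometric property of $I_1$ together with $\int_\R u^2\,\nu(du)=1$ to obtain the closed form $\E[Y_s^{\lambda_i}Y_t^{\lambda_j}]=\tfrac{2\sqrt{\lambda_i\lambda_j}}{\lambda_i+\lambda_j}\,e^{-\lambda_i(s-t)_+-\lambda_j(t-s)_+}$. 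Integrating this exponential over $[0,T]^2$ and dividing by $T$ yields $\tfrac{2\sqrt{\lambda_i\lambda_j}}{\lambda_i+\lambda_j}\big(\tfrac1{\lambda_i}+\tfrac1{\lambda_j}\big)+O(1/T)=B_{ij}+O(1/T)$, so that $\|B-K_T\|_{H.S.}=O(1/T)$, a rate even faster than the one announced.

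For the third-moment term I would factor the integral as
$$ \int_Z|g_{T,i}(z)|^3\,\mu(dz) = \Big(\tfrac{\sqrt{2\lambda_i}}{\sqrt T\,\lambda_i}\Big)^3\Big(\int_\R |u|^3\,\nu(du)\Big)\Big(\int_\R |h_T^{\lambda_i}(x)|^3\,dx\Big), $$
where $\int_\R|u|^3\,\nu(du)<\infty$ follows from the moment hypotheses on $\nu$ (by Cauchy--Schwarz from the finiteness of the second and fourth moments). The spatial integral is $O(T)$: the contribution of $x\le 0$ stays bounded uniformly in $T$, while on $[0,T]$ the substitution $y=T-x$ gives $\int_0^T(1-e^{-\lambda_i y})^3\,dy = T + O(1)$. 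Hence $\int_Z|g_{T,i}|^3\,\mu(dz)=O(T^{-3/2})\cdot O(T)=O(T^{-1/2})$. Combining the two estimates in the displayed bound produces $d_3(\bar{A}(T),X_B)\le \alpha(\bar\lambda)/\sqrt T$ for a finite constant depending only on $\lambda_1,\dots,\lambda_d$ and the moments of $\nu$, and convergence in distribution then follows from Remark \ref{RM : CVinLAW}. The only genuinely delicate point is the rigorous justification of the stochastic Fubini step and the careful bookkeeping of constants in the asymptotics; every individual estimate reduces to an elementary exponential integral.
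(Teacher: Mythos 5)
Your proposal is correct and follows essentially the same route as the paper: a Fubini argument to realize each $A(T,\lambda_i)$ as a single Wiener--It\^{o} integral $I_1(g_{T,i})$, the covariance asymptotics $K_T(i,j)=B_{ij}+O(1/T)$, the estimate $\int_Z|g_{T,i}|^3\,d\mu=O(T^{-1/2})$, and a final application of Corollary \ref{Cor2}. Your explicit remark that $B=2vv^{T}$ is rank one, which forces the use of the interpolation bound $d_3$ rather than the Malliavin--Stein one, is a useful clarification of why the paper invokes Corollary \ref{Cor2} here.
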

\begin{proof}
By applying Fubini theorem on $A(T,\lambda)$, we have
$$ \cfrac{1}{\sqrt{T}} \int_0^T Y_t^{\lambda} dt = I_1(g_{\lambda,T})$$
where
$$g_{\lambda,T}= \1_{(-\infty,T]}(x) u \sqrt{\frac{2\lambda}{T}}
\int_{x\vee 0}^T \exp(-\lambda (t-x)) dt$$
\begin{eqnarray*}
& &  \E[A(T,\lambda_i) A(T,\lambda_j)] \\
&=&  \int_{\R} u^2 \nu(du) \Big( \int_{-\infty}^{0} dx  \cfrac{2}{T \sqrt{\lambda_i \lambda_j}} \exp \big((\lambda_i+\lambda_j)x \big)\times \big(1-\exp(-\lambda_i T)\big)\times \big(1-\exp(-\lambda_j T) \big) \\
& &+ \int_0^T dx \cfrac{2}{T \sqrt{\lambda_i \lambda_j}} \exp \big((\lambda_i+\lambda_j)x \big) \times \big(\exp(-\lambda_i x) - \exp(-\lambda_i T) \big)\times \big(\exp(-\lambda_j x) - \exp(-\lambda_j T) \big)  \Big)  \\
&=& \cfrac{2}{T \sqrt{\lambda_i \lambda_j}} \Big(\cfrac{1}{\lambda_i+\lambda_j} \times \big(1-\exp(-\lambda_i T) \big) \times \big(1-\exp(-\lambda_j T) \big)
+ T - \cfrac{1}{ \lambda_i}\times(1 - \exp(-\lambda_i T)) \\
& &
- \cfrac{1}{ \lambda_j} \big(1 - \exp(-\lambda_j T) \big) + \cfrac{1}{\lambda_i+\lambda_j} \big(1 - \exp(-(\lambda_i+\lambda_j) T)\big)\Big) \\
& = & \cfrac{2}{\sqrt{\lambda_i \lambda_j}} +O\left(\cfrac{1}{T}\right) \qquad \text{as } T\rightarrow \infty .
\end{eqnarray*}
And we may verify that
$\|g_{\lambda,T}\|^3_{L^3(d\nu dx)}  \sim \cfrac{1}{\sqrt{T}} .$ for all $\lambda \in \R$. (See \cite{pstu} and \cite{pt} for details.) Finally, we deduce the conclusion by using Corollary \ref{Cor2}.
\end{proof}\\

\noindent\textbf{Example 2 (Empirical second moments)}\\
We are interested in the quadratic functional $Q(T,\lambda) $ given by:
$$Q(T,\lambda):= \sqrt{T}\left(\frac{1}{T} \int_0^T (Y_t^{\lambda})^2 dt -1 \right) ,\qquad T>0,\lambda>0$$
In \cite{pstu} and \cite{pt}, the authors have proved the following limit theorem for $Q(T,\lambda) $. (See Theorem 7.1 in \cite{pstu} and Proposition 7 in \cite{pt})
\begin{theorem}
For every $\lambda >0 $, as $T\rightarrow \infty$,
$$Q(T,\lambda):= \sqrt{T}\left(\frac{1}{T} \int_0^T (Y_t^{\lambda})^2 dt -1 \right)
\overset{(law)}{\longrightarrow}  \sqrt{\cfrac{2}{\lambda} + c_{\nu}^2} \times X  $$
where $X \sim \N(0,1)$ is a standard Gaussian random variable and $ c_{\nu}^2 = \int_{\R} u^4 \nu(du)$ is a constant. And there exists a constant $0<\beta(\lambda)<\infty$, independent of $T$ and such that
$$ d_w\left(\cfrac{Q(T,\lambda)}{\sqrt{\frac{2}{\lambda} + c_{\nu}^2}} , X \right)
\leq \cfrac{\beta(\lambda)}{\sqrt{T}} $$
\end{theorem}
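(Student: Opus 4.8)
The plan is to realize $Q(T,\lambda)$ as a random variable living in the sum of the first two Wiener chaoses of $\hat N$, and then to apply the one-dimensional Poisson Malliavin--Stein bound recorded in Line 3 of Table 1 (the case $d=1$, taken from \cite{pstu}), whose target is exactly a standard Gaussian $X\sim\N(0,1)$ and which controls the Wasserstein distance $d_w$. Writing $\mu(du,dx)=\nu(du)\,dx$ and recalling $Y_t^\lambda = I_1(f_t^\lambda)$, the first step is to expand $(Y_t^\lambda)^2$ by the product formula (\ref{product}) in the case $p=q=1$,
$$I_1(f_t^\lambda)^2 = I_2(f_t^\lambda\otimes f_t^\lambda) + I_1\big((f_t^\lambda)^2\big) + \|f_t^\lambda\|_{L^2(\mu)}^2,$$
and to use $\|f_t^\lambda\|_{L^2(\mu)}^2 = \E[(Y_t^\lambda)^2]=1$. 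Subtracting $1$, integrating in $t\in[0,T]$ and invoking a (stochastic) Fubini theorem (legitimate because the kernels here satisfy Assumptions A, B and C, as noted in the text), I would obtain
$$Q(T,\lambda) = I_1(g_T) + I_2(h_T),\qquad g_T := \frac1{\sqrt T}\int_0^T (f_t^\lambda)^2\,dt,\quad h_T := \frac1{\sqrt T}\int_0^T f_t^\lambda\otimes f_t^\lambda\,dt,$$
with both $g_T$ and $h_T$ given explicitly by elementary $t$-integrals of the exponential kernel $f_t^\lambda$.

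Set $\sigma^2 = 2/\lambda + c_\nu^2$ and $F := Q(T,\lambda)/\sigma$. The next step is to evaluate $T_1 := \langle DQ, -DL^{-1}Q\rangle_{L^2(\mu)}$. Using $D_z I_1(g_T)=g_T(z)$, $-D_zL^{-1}I_1(g_T)=g_T(z)$, together with $D_z I_2(h_T)=2I_1(h_T(z,\cdot))$, $-D_zL^{-1}I_2(h_T)=I_1(h_T(z,\cdot))$, and expanding every product $I_1 I_1$ once more by (\ref{product}), exactly as in the proof of Proposition \ref{example12} with $m=n=1$, I would find that $T_1$ equals its mean
$$\E[T_1]=\|g_T\|_{L^2(\mu)}^2 + 2\|h_T\|_{L^2(\mu^2)}^2 = \mathbf{Var}(Q(T,\lambda))$$
plus a centered remainder that is an orthogonal sum of the first-chaos integral $I_1\big(2\,h_T\star_2^1 h_T + 3\,g_T\star_1^1 h_T\big)$ and the second-chaos integral $2I_2\big(\widetilde{h_T\star_1^1 h_T}\big)$. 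By the isometry property (Proposition \ref{P : MWIone}) and chaos orthogonality,
$$\E\big[(\sigma^2-T_1)^2\big] = \big(\sigma^2 - \mathbf{Var}(Q)\big)^2 + \big\|2\,h_T\star_2^1 h_T + 3\,g_T\star_1^1 h_T\big\|_{L^2(\mu)}^2 + 8\,\big\|\widetilde{h_T\star_1^1 h_T}\big\|_{L^2(\mu^2)}^2.$$
Plugging $F$ into the $d=1$ bound (and recalling that rescaling by $\sigma$ divides the two summands by $\sigma^2$ and $\sigma^3$ respectively) then yields
$$d_w(F,X) \le \frac1{\sigma^2}\sqrt{\E[(\sigma^2-T_1)^2]} + \frac1{\sigma^3}\int_Z \mu(dz)\,\E\big[(D_zQ)^2\,|D_zL^{-1}Q|\big],$$
and the residual integral can be dominated, as in the proof of Proposition \ref{example12}, by a constant times $\|g_T\|_{L^3(\mu)}^3 + \|h_T\|_{L^2(\mu^2)}\big(\|h_T\|_{L^4(\mu^2)}^2 + \|h_T\star_2^1 h_T\|_{L^2(\mu)}\big)$.

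What remains --- and this is the main obstacle --- is the purely analytic task of showing that every quantity above is $O(1/\sqrt T)$. Concretely I would first verify $\mathbf{Var}(Q) = \sigma^2 + O(1/T)$, carefully separating the first-chaos diagonal contribution (which converges to $c_\nu^2 = \int_\R u^4\,\nu(du)$, and is the genuinely non-Gaussian Poisson feature absent on Wiener space) from the second-chaos contribution (which converges to $2/\lambda$); this makes the bias term $O(1/T)$. Then I would bound each of the contraction norms $\|h_T\star_2^1 h_T\|_{L^2(\mu)}$, $\|g_T\star_1^1 h_T\|_{L^2(\mu)}$, $\|h_T\star_1^1 h_T\|_{L^2(\mu^2)}$ and the moments $\|g_T\|_{L^3(\mu)}^3$, $\|h_T\|_{L^4(\mu^2)}^2$ by explicit computation with the exponential kernels. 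The exponential decay $e^{-\lambda(t-x)}$ makes all the time integrals converge and produces one surplus factor $1/\sqrt T$ per $1/\sqrt T$ normalisation that is not cancelled by the length-$T$ domain of integration, while the standing hypotheses $\int_\R u^j\,\nu(du)<\infty$ for $j=2,3,4,6$ guarantee that all the $u$-moments appearing (up to order six, from the triple products in the residual term) are finite. Tracking these rates gives $d_w(F,X)\le \beta(\lambda)/\sqrt T$ for a finite constant $\beta(\lambda)$ depending only on $\lambda$ and the fixed moments of $\nu$, and the claimed convergence in law follows since $d_w$ metrizes convergence in distribution. The delicate bookkeeping is entirely in matching each contraction to the correct $1/\sqrt T$ rate and in isolating the constant $c_\nu^2$.
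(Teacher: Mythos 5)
Your proposal is correct and follows essentially the same route as the source: the paper itself does not reprove this statement (it quotes Theorem 7.1 of \cite{pstu} and Proposition 7 of \cite{pt}), but your argument --- decomposing $Q(T,\lambda)=I_1(g_T)+I_2(h_T)$ via the product formula, applying the one-dimensional Wasserstein bound of Line 3 of Table 1, and reducing everything to the $O(1/\sqrt{T})$ decay of the variance bias and of the contraction norms $\|h_T\star_2^1h_T\|$, $\|g_T\star_1^1h_T\|$, $\|h_T\star_1^1h_T\|$, $\|g_T\|_{L^3}^3$, $\|h_T\|_{L^4}^2$ --- is exactly the $m=n=1$ specialization of Proposition \ref{example12} and of the paper's own proof of the multidimensional Theorem \ref{CLT_ex2_thm}. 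The chaos computation of $\langle DQ,-DL^{-1}Q\rangle$ and the identification of the limiting variance $2/\lambda+c_\nu^2$ (second chaos plus the diagonal first-chaos term) are both accurate, and the remaining kernel estimates are precisely those the paper defers to \cite{pstu} and \cite{pt}.
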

We introduce here a multi-dimensional generalization of the above result.
\begin{theorem} \label{CLT_ex2_thm}
Given an integer $d\geq 2$. For $\lambda_1,\ldots,\lambda_d >0$, as $T\rightarrow \infty$,
\begin{equation} \label{CLT_ex1}
 \bar{Q}(T)=(Q(T,\lambda_1),\ldots,Q(T,\lambda_d)) \overset{(law)}{\longrightarrow} X_C ,
\end{equation}
where $X_C$ is a centered $d$-dimensional Gaussian vector with covariance matrix $C=(C_{ij})_{d\times d} $, defined by
$$C_{ij}= \cfrac{4}{\lambda_i + \lambda_j}+c_{\nu}^2,\qquad 1\leq i,j\leq d , $$
and $ c_{\nu}^2 = \int_{\R} u^4 \nu(du)$. And there exists a constant $0<\beta(\bar{\lambda}) = \beta(\lambda_1,\ldots,\lambda_d)<\infty$, independent of $T$ and such that
$$ d_3(\bar{Q}(T) , X_C)
\leq \cfrac{\beta(\bar{\lambda})}{\sqrt{T}} $$
\end{theorem}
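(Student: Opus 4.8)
The plan is to put each coordinate of $\bar Q(T)$ into chaotic form and then feed the resulting kernels into the interpolation bound of Section~\ref{SEC : INTERPLT}. Since $Y_t^{\lambda_i}=I_1(f_t^{\lambda_i})$, the product formula (\ref{product2}) with $p=q=1$ yields $(Y_t^{\lambda_i})^2 = I_2(f_t^{\lambda_i}\otimes f_t^{\lambda_i}) + I_1\big((f_t^{\lambda_i})^2\big) + \|f_t^{\lambda_i}\|^2_{L^2(\mu)}$; as $\|f_t^{\lambda_i}\|^2_{L^2(\mu)} = \mathbf{Var}(Y_t^{\lambda_i}) = 1$, subtracting $1$ removes the constant chaos. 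Integrating in $t$ and applying a Fubini argument (licit because the kernels satisfy Assumptions A, B and C), I would write
\[ Q(T,\lambda_i) = I_2(h_{T,i}) + I_1(g_{T,i}), \qquad h_{T,i} = \frac{1}{\sqrt T}\int_0^T f_t^{\lambda_i}\otimes f_t^{\lambda_i}\,dt, \quad g_{T,i} = \frac{1}{\sqrt T}\int_0^T (f_t^{\lambda_i})^2\,dt. \]
Hence $\bar Q(T)$ is the linear aggregation of the vector of \emph{separate} single and double integrals $(I_1(g_{T,1}),\dots,I_1(g_{T,d}),I_2(h_{T,1}),\dots,I_2(h_{T,d}))$, which is exactly of the form (\ref{scriabin}) with $m=n=d$. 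I may therefore either invoke Proposition~\ref{example12} and Corollary~\ref{CLT_cor} for that vector and transfer the estimate through the (fixed) aggregation map, or apply Theorem~\ref{ineq_tala} directly to $\bar Q(T)$, in which case the Malliavin quantities $\langle DQ(T,\lambda_i),-DL^{-1}Q(T,\lambda_j)\rangle_{L^2(\mu)}$ split into the four scalar products already computed in the proof of Proposition~\ref{example12}.

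Next I would identify the limiting covariance. By the isometric property (Proposition~\ref{P : MWIone}) and the orthogonality of chaoses of distinct orders, $\E[Q(T,\lambda_i)Q(T,\lambda_j)] = 2\langle h_{T,i},h_{T,j}\rangle_{L^2(\mu^2)} + \langle g_{T,i},g_{T,j}\rangle_{L^2(\mu)}$, the cross term vanishing. Using $\langle f_s^{\lambda_i},f_t^{\lambda_j}\rangle_{L^2(\mu)} = \frac{2\sqrt{\lambda_i\lambda_j}}{\lambda_i+\lambda_j}\,e^{-\lambda_j|t-s|}$ for $s\le t$ together with $\int_\R u^2\,\nu(du)=1$, the double-integral part is a double time integral that converges to $4/(\lambda_i+\lambda_j)$, while the single-integral part converges to $\int_\R u^4\,\nu(du)=c_\nu^2$ (here $g_{T,i}(u,x)\approx u^2/\sqrt T$ on the bulk of $[0,T]$). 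Summing gives $C_{ij}=\frac{4}{\lambda_i+\lambda_j}+c_\nu^2$, which specializes on the diagonal to the known value $\frac{2}{\lambda_i}+c_\nu^2$.

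For the rate, the task reduces to showing that the quantities $S_1,\dots,S_6$ of Proposition~\ref{example12} (equivalently, the two lines of the bound in Theorem~\ref{ineq_tala}) are $O(1/\sqrt T)$; concretely this is Conditions 2(a)--(d) of Corollary~\ref{CLT_cor}, namely that $\|g_{T,i}\|^3_{L^3(\mu)}$, $\|h_{T,i}\|^2_{L^4(\mu^2)}$, $\|h_{T,i}\star_2^1 h_{T,i}\|_{L^2(\mu)}=\|h_{T,i}\star_1^0 h_{T,i}\|_{L^2(\mu^3)}$ and $\|h_{T,i}\star_1^1 h_{T,i}\|_{L^2(\mu^2)}$ all tend to $0$, in fact like $1/\sqrt T$. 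Each of these is the one-dimensional quantity already estimated in \cite[Theorem 7.1]{pstu} and \cite[Proposition 7]{pt}; the only genuinely multivariate objects are the cross-parameter contraction norms ($i\neq j$), and these are dominated by the diagonal ones via the Cauchy--Schwarz inequality of Lemma~\ref{lemma_ineq}. Taking $\beta(\bar\lambda)$ to be the resulting finite, $T$-independent constant yields $d_3(\bar Q(T),X_C)\le \beta(\bar\lambda)/\sqrt T$.

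The main obstacle is the explicit $1/\sqrt T$ decay of the contraction norms, together with the bookkeeping of the boundary terms $e^{-\lambda(T-x)}$ and $e^{-\lambda|t-s|}$ produced when the time integrals are evaluated over the finite window $[0,T]$. The moment hypotheses $\int_\R u^j\,\nu(du)<\infty$ for $j=2,3,4,6$ enter exactly here: $j=6$ bounds $\|g_{T,i}\|^3_{L^3(\mu)}$ and $j=4$ bounds the $L^4$-norm of the second-chaos kernel. The passage from the pure-chaos vector to $\bar Q(T)$ is harmless, since the aggregation map is fixed and merely rescales the constants in the bound.
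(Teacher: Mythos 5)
Your proposal is correct and follows essentially the same route as the paper: decompose each $Q(T,\lambda_i)$ via the product formula into a single plus a double integral (your $g_{T,i},h_{T,i}$ are the paper's $\sqrt{T}H^\star_{\lambda_i,T},\sqrt{T}H_{\lambda_i,T}$ before the time integral is evaluated), prove the stronger convergence of the $2d$-dimensional pure-chaos vector via Corollary \ref{CLT_cor}, compute the limiting covariances by the isometry, and reduce the rate to the one-dimensional contraction estimates of \cite{pstu} and \cite{pt}, with off-diagonal contractions controlled by Lemma \ref{lemma_ineq}. No substantive differences.
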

\begin{proof}
For every $T>0 $ and $\lambda>0 $, we introduce the notations
\begin{eqnarray*}
H_{\lambda,T}(u,x;u',x') &=& (u\times u') \cfrac{\1_{(-\infty,T)^2}(x,x')}{T} \Big( \exp \big(\lambda(x+x') \big) \times \big(1-\exp(-2\lambda T)\big)\times \1_{(x\vee x'\leq 0)} \\
& & + \exp \big(\lambda(x+x')\big)\times \big(\exp(-2\lambda(x\vee x')) - \exp(-2\lambda T)\big)\times \1_{(x\vee x' > 0)} \Big)
\end{eqnarray*}
\begin{eqnarray*}
H^\star_{\lambda,T}(u,x) &=& u^2 \cfrac{\1_{(-\infty,T)}(x)}{T}
\Big( \exp(2\lambda x)\times \big(1-\exp(-2\lambda T)\big)\times \1_{(x\leq 0)} \\
& & + \exp(2\lambda x)\times \big(\exp(-2\lambda x) - \exp(-2\lambda T)\big)\times \1_{(x > 0)} \Big)
\end{eqnarray*}
By applying the multiplication formula (\ref{product}) and a Fubini argument, we deduce that
$$ Q(T,\lambda)= I_1(\sqrt{T} H^\star_{\lambda,T}) + I_2(\sqrt{T} H_{\lambda,T}) , $$
which is the sum of a single and a double Wiener-Itô integral. Instead of deducing the convergence for
$(Q(T,\lambda_1),\ldots,Q(T,\lambda_d)) $, we prove the stronger result:
\begin{equation} \label{CLT_ex2}
(I_1(\sqrt{T} H^\star_{\lambda_1,T}) ,\ldots, I_1(\sqrt{T} H^\star_{\lambda_d,T}),  I_2(\sqrt{T} H_{\lambda_1,T}) ,\ldots, I_2(\sqrt{T} H_{\lambda_d,T})) \overset{(law)}{\longrightarrow} X_D
\end{equation}
 as $T \rightarrow \infty$. Here, $X_D$ is a centered $2d$-dimensional Gaussian vector with covariance matrix $D$ defined as:
$$ D(i,j)=\left\{
            \begin{array}{ll}
              c_{\nu}^2, & \hbox{if } 1\leq i,j\leq d \\
              \cfrac{4}{\lambda_i + \lambda_j}, & \hbox{if } d+1\leq i,j\leq 2d \\
              0, & \hbox{otherwise.} \\
            \end{array}
          \right.
$$
We prove (\ref{CLT_ex2}) in two steps (by using Corollary \ref{CLT_cor}). Firstly, we aim at verifying
$$\lim_{T\rightarrow \infty} \E[F^{(T)}_i F^{(T)}_j ]=D(i,j),\qquad 1\leq i,j \leq 2d,$$
for
$$ F^{(T)}_k =\left\{
            \begin{array}{ll}
              I_1(\sqrt{T} H^\star_{\lambda_k,T}) , & \hbox{if } 1\leq k\leq d \\
              I_2(\sqrt{T} H_{\lambda_k,T}), & \hbox{if } d+1\leq k\leq 2d \\
            \end{array}
          \right.
$$
Indeed, by standard calculations, we have
\begin{eqnarray*}
& & T \int_{\R\times \R} H^\star_{\lambda_i,T}(u,x) H^\star_{\lambda_j,T}(u,x) \nu(du) dx\\
&=& \cfrac{1}{T} c_{\nu}^2 \Big(\cfrac{1}{2(\lambda_i+\lambda_j)}\times \big(1-\exp(-2\lambda_i T)\big) \times \big(1-\exp(-2\lambda_j T)\big)
+ T - \cfrac{1}{2 \lambda_i} \times \big(1 - \exp(-2\lambda_i T)\big) \\
& &
- \cfrac{1}{2 \lambda_j}\times \big(1 - \exp(-2\lambda_j T)\big) + \cfrac{1}{2(\lambda_i+\lambda_j)}\times \big(1 - \exp(-2(\lambda_i+\lambda_j) T)\big)\Big) \\
& = & c_{\nu}^2 + O \left(\cfrac{1}{T} \right), \qquad \text{as } T\rightarrow \infty ,
\end{eqnarray*}
and
\begin{eqnarray*}
& & 2T \int_{\R^4} H_{\lambda_i,T}(u,x;u',x') H_{\lambda_j,T}(u,x;u',x') \nu(du)\nu(du') dxdx'\\
&=& \cfrac{2}{T} \Big( \cfrac{(1-\exp(-2\lambda_i T))\times (1-\exp(-2\lambda_j T))}{(\lambda_i + \lambda_j)^2} + \cfrac{2}{\lambda_i + \lambda_j} \times \big(T-\cfrac{1}{2\lambda_i} \big( 1-\exp(-2\lambda_i T) \big) \\
& & -\cfrac{1}{2\lambda_j} \times \big( 1-\exp(-2\lambda_j T)\big) +\cfrac{1}{2(\lambda_i + \lambda_j)} \times \big( 1-\exp(-2(\lambda_i + \lambda_j) T)\big) \big)  \Big)
\\
& = & \cfrac{4}{\lambda_i + \lambda_j} + O \left(\cfrac{1}{T} \right), \qquad \text{as } T\rightarrow \infty .
\end{eqnarray*}
\noindent Secondly, we use the fact that for $\lambda=\lambda_1,\ldots,
\lambda_d $, the following asymptotic relations holds as $T\rightarrow \infty$:\\
$(a)\qquad \|\sqrt{T} H^\star_{\lambda,T}\|^3_{L^3(d\nu dx)}  \sim \cfrac{1}{\sqrt{T}}\,;$\\
$(b)\qquad \|\sqrt{T} H_{\lambda,T}\|^2_{L^4((d\nu dx)^2)} \sim \cfrac{1}{\sqrt{T}}\,;$\\
$(c)\qquad \|(\sqrt{T} H_{\lambda,T}) \star_2^1 (\sqrt{T} H_{\lambda,T})\|_{L^2(d\nu dx)} = \|(\sqrt{T} H_{\lambda,T}) \star_1^0 (\sqrt{T} H_{\lambda,T})  \|_{L^2((d\nu dx)^3)} \sim \cfrac{1}{\sqrt{T}}\,;$\\
$(d)\qquad \|(\sqrt{T} H_{\lambda,T}) \star_1^1 (\sqrt{T} H_{\lambda,T})  \|_{L^2((d\nu dx)^2)}
   \sim \cfrac{1}{\sqrt{T}}\,;$\\
$(e)\qquad \|(\sqrt{T} H^\star_{\lambda,T}) \star_1^1 (\sqrt{T} H_{\lambda,T})\|_{L^2(d\nu dx)} \sim \cfrac{1}{\sqrt{T}}\,.$  \\
The reader is referred to \cite[Section 7]{pstu} and \cite[Section 4]{pt} for a proof of the above asymptotic relations.
\end{proof} \\

\noindent\textbf{Example 3 (Empirical joint moments of shifted processes)} \\
We are now able to study a generalization of Example 2. We define
$$Q_h(T,\lambda):= \sqrt{T} \left(\frac{1}{T} \int_0^T Y_t^{\lambda}Y_{t+h}^{\lambda} dt - \exp(-\lambda h) \right) ,\qquad h> 0, T>0, \lambda>0 .$$
The theorem below is a multi-dimensional CLT for $Q_h(T,\lambda)$.
\begin{theorem}
For $\lambda_1,\ldots,\lambda_d >0$ and $h\geq 0$, as $T\rightarrow \infty$,
\begin{equation} \label{CLT_ex3}
 \bar{Q}_h (T)=(Q_h(T,\lambda_1),\ldots, Q_h(T,\lambda_d)) \overset{(law)}{\longrightarrow} X_E ,
\end{equation}
where $X_E$ is a centered $d$-dimensional Gaussian vector with covariance matrix $E=(E_{ij})_{d\times d} $, with
$$E_{ij}= \cfrac{4}{\lambda_i + \lambda_j}+c_{\nu}^2 \exp \Big(-(\lambda_i+\lambda_j)h \Big) ,\qquad 1\leq i,j\leq d $$ and $ c_{\nu}^2 = \int_{\R} u^4 \nu(du)$. Moreover, there exists a constant $0<\gamma(h,\bar{\lambda}) = \gamma(h,\lambda_1,\ldots,\lambda_d)<\infty$, independent of $T$ and such that
$$ d_3(\bar{Q}_h (T) , X_E)
\leq \cfrac{\gamma(h,\bar{\lambda})}{\sqrt{T}} $$
\end{theorem}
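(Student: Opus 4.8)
The plan is to follow verbatim the strategy of the proof of Theorem~\ref{CLT_ex2_thm}, reducing the shifted statement to the one-dimensional contraction estimates collected in Corollary~\ref{CLT_cor}. First I would apply the product formula~(\ref{product}) to the two first-chaos factors; for Poisson integrals of order one it reads
$$ I_1(f)I_1(g) = I_2(\widetilde{f\otimes g}) + I_1(fg) + \langle f,g\rangle_{L^2(\mu)}. $$
Applying this to $Y_t^\lambda Y_{t+h}^\lambda = I_1(f_t^\lambda)I_1(f_{t+h}^\lambda)$ and computing $\langle f_t^\lambda, f_{t+h}^\lambda\rangle_{L^2(d\nu\,dx)} = \exp(-\lambda h)$ (a one-line integral using $\int_\R u^2\nu(du)=1$), one sees that the centering constant is exactly this inner product. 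A Fubini argument then yields the decomposition
$$ Q_h(T,\lambda) = I_1\big(\sqrt{T}\,H^\star_{h,\lambda,T}\big) + I_2\big(\sqrt{T}\,H_{h,\lambda,T}\big), $$
where $H^\star_{h,\lambda,T}(u,x) = \frac1T\int_0^T f_t^\lambda(u,x)f_{t+h}^\lambda(u,x)\,dt$ and $H_{h,\lambda,T} = \frac1T\int_0^T \widetilde{f_t^\lambda\otimes f_{t+h}^\lambda}\,dt$. This is the exact shifted analogue of the splitting used in Example~2.

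Next, rather than treating $\bar{Q}_h(T)$ directly, I would prove the stronger joint convergence of the $2d$-dimensional vector
$$ F^{(T)} = \big(I_1(\sqrt{T}H^\star_{h,\lambda_1,T}),\dots,I_1(\sqrt{T}H^\star_{h,\lambda_d,T}),\, I_2(\sqrt{T}H_{h,\lambda_1,T}),\dots,I_2(\sqrt{T}H_{h,\lambda_d,T})\big) $$
towards $X_D\sim\N_{2d}(0,D)$, where $D$ is block-diagonal: $D(i,j)=c_\nu^2\exp(-(\lambda_i+\lambda_j)h)$ on the single-integral block ($1\le i,j\le d$), $D(i,j)=4/(\lambda_i+\lambda_j)$ on the double-integral block ($d+1\le i,j\le 2d$), and $0$ on the off-diagonal blocks. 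The vanishing of the cross-covariances is automatic by the orthogonality of integrals of distinct orders. I would then verify the hypotheses of Corollary~\ref{CLT_cor}: the two covariance limits $T\langle H^\star_{h,\lambda_i,T},H^\star_{h,\lambda_j,T}\rangle \to c_\nu^2\exp(-(\lambda_i+\lambda_j)h)$ and $2T\langle H_{h,\lambda_i,T},H_{h,\lambda_j,T}\rangle \to 4/(\lambda_i+\lambda_j)$ are two explicit exponential integrals, identical in structure to the two displays in the proof of Theorem~\ref{CLT_ex2_thm}, the only difference being the bounded extra factors $\exp(-\lambda h)$ produced by the shift.

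The verification of the contraction conditions is where the main effort lies. For each $\lambda\in\{\lambda_1,\dots,\lambda_d\}$ I would establish, for the shifted kernels, the five asymptotic relations of type (a)--(e) from the proof of Theorem~\ref{CLT_ex2_thm}: the order-three norm $\|\sqrt{T}H^\star_{h,\lambda,T}\|^3_{L^3(d\nu\,dx)}$, the $L^4$-norm $\|\sqrt{T}H_{h,\lambda,T}\|^2_{L^4((d\nu\,dx)^2)}$, the two self-contraction norms $\|(\sqrt{T}H_{h,\lambda,T})\star_2^1(\sqrt{T}H_{h,\lambda,T})\|_{L^2}$ and $\|(\sqrt{T}H_{h,\lambda,T})\star_1^1(\sqrt{T}H_{h,\lambda,T})\|_{L^2}$, and the mixed norm $\|(\sqrt{T}H^\star_{h,\lambda,T})\star_1^1(\sqrt{T}H_{h,\lambda,T})\|_{L^2}$, all of order $T^{-1/2}$. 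After inserting the explicit form of $f_t^\lambda$ and carrying out the star-contractions, each reduces to iterated integrals over $[0,T]$ of products of decaying exponentials; the shift $h$ enters only through bounded multiplicative factors $\exp(-\lambda h)$ and does not alter the $T^{-1/2}$ rate. I expect the bookkeeping of the indicator constraints $\1_{(-\infty,t]}$ and of the $x\vee x'$ terms in the symmetrized kernel to be the only laborious part, and here I would lean on the analogous computations in \cite[Section~7]{pstu} and \cite[Section~4]{pt}, exactly as in Example~2.

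Finally, Proposition~\ref{example12} furnishes an explicit bound $d_3(F^{(T)},X_D)\le \text{const}\cdot T^{-1/2}$. To pass from $F^{(T)}$ to $\bar{Q}_h(T)$ I would use the linear map $\Phi:\R^{2d}\to\R^d$, $\Phi(a_1,\dots,a_{2d})=(a_1+a_{d+1},\dots,a_d+a_{2d})$, for which $\bar{Q}_h(T)=\Phi(F^{(T)})$ and $\Phi X_D\sim\N_d(0,E)$ with $E_{ij}=D(i,j)+D(d+i,d+j)=4/(\lambda_i+\lambda_j)+c_\nu^2\exp(-(\lambda_i+\lambda_j)h)$. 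Since $\Phi$ has entries in $\{0,1\}$, composing any $g\in\C^3(\R^d)$ with $\Phi$ multiplies the relevant derivative bounds by a constant depending only on $d$, so the estimate $d_3(\bar{Q}_h(T),X_E)\le \gamma(h,\bar\lambda)\,T^{-1/2}$ follows, completing the proof.
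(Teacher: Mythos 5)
Your proposal is correct and follows essentially the same route as the paper: decompose each $Q_h(T,\lambda_i)$ into a single plus a double integral via the product formula, prove the stronger $2d$-dimensional CLT for the vector of all chaotic components with block-diagonal covariance, verify the covariance limits and the contraction conditions (a)--(e) exactly as in Example 2 (the shift only contributing bounded factors $\exp(-\lambda h)$), and invoke Corollary \ref{CLT_cor}. Your explicit treatment of the final linear projection $\Phi$ is a welcome clarification of the step the paper dispatches as ``analogous arguments,'' but it is not a different method.
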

\begin{proof}
We have
\begin{eqnarray*}
\int_0^{T} Y_t^{\lambda} Y_{t+h}^{\lambda} dt &=& \int_0^{T-h} I_1(f_t^{\lambda}) I_1(f_{t+h}^{\lambda}) dt \\
&=&  \int_0^{T-h} \Big( I_2(f_t^{\lambda} \star^0_0 f_{t+h}^{\lambda}) + I_1(f_t^{\lambda} \star^0_1 f_{t+h}^{\lambda}) + f_t^{\lambda} \star^1_1 f_{t+h}^{\lambda} \Big)dt \\
&=&  \int_0^{T-h} \Big(I_2(\hat{h}_{t,h}^{\lambda}  ) + I_1( \hat{h}_{t,h}^{*,\lambda} ) + C \Big) dt \\
&=& I_2(T H_{\lambda,T}^{h}  ) + I_1(T H_{\lambda,T}^{*,h} ) + C(T-h)
\end{eqnarray*}
 by using multiplication formula (\ref{product}) and Fubini theorem. By simple calculations, we obtain that
$$ \hat{h}_{t,h}^{\lambda} (u,x;u',x') = 2\lambda \1_{(-\infty,t] \times (-\infty,t+h]} (x,x') \times uu' \exp(-\lambda (2t+h-x-x')) $$
$$ \hat{h}_{t,h}^{*,\lambda} (u,x) = 2\lambda \1_{(-\infty,t]} (x) \times u^2 \exp(-\lambda (2t+h-2x)) $$
as well as
\begin{eqnarray*}
H_{\lambda,T}^{*,h} (u,x) &=& \cfrac{1}{T} \int_0^T  \hat{h}_{t,h}^{*,\lambda} (u,x) dt \\
&=& u^2 \cfrac{\1_{(-\infty, T]} (x) }{T}\times \exp(\lambda(2x-h)) \times
\Big(\1_{\{x>0\}}\times (\exp(-2 \lambda x)-\exp(-2\lambda T))\\
& & +  \1_{\{x\leq 0\}} \times(1
- \exp(-2\lambda T))  \Big)
\end{eqnarray*}
\begin{eqnarray*}
H_{\lambda,T}^{h} (u,x;u',x') &=& \cfrac{1}{T} \int_0^T  \hat{h}_{t,h}^{\lambda} (u,x;u',x') dt \\
&=&  u u' \cfrac{\1_{(-\infty, T]} (x) \1_{(-\infty, T+h]} (x') }{T} \times \exp(\lambda(x+x'-h)) \\
& &\times \Big(\1_{\{x\vee (x'-h)>0\}} \times \big(\exp(-2\lambda (x\vee (x'-h)) ) -\exp(-2\lambda T)\big)\\
& & +  \1_{\{x\vee (x'-h)\leq 0\}} \times (1- \exp(-2\lambda T ))  \Big)
\end{eqnarray*}

Similar to the procedures in the precedent example, we prove the stronger result:
\begin{equation}
(I_1(\sqrt{T} H^{\star,h}_{\lambda_1,T}) ,\ldots, I_1(\sqrt{T} H^{\star,h}_{\lambda_d,T}),  I_2(\sqrt{T} H^{h}_{\lambda_1,T}) ,\ldots, I_2(\sqrt{T} H^{h}_{\lambda_d,T})) \overset{(law)}{\longrightarrow} X_{D^h}
\end{equation}
 as $T \rightarrow \infty$. Here, $X_{D^h}$ is a centered $2d$-dimensional Gaussian vector with covariance matrix $D^h$ defined as:
$$ D^h(i,j)=\left\{
            \begin{array}{ll}
              c_{\nu}^2 \exp(-(\lambda_i + \lambda_j)h), & \hbox{if } 1\leq i,j\leq d \\
              \cfrac{4}{\lambda_i + \lambda_j}, & \hbox{if } d+1\leq i,j\leq 2d \\
              0, & \hbox{otherwise.} \\
            \end{array}
          \right.
$$

We have
\begin{eqnarray*}
& & T \int_{\R\times \R} H_{\lambda,T}^{*,h} (u,x) H_{\lambda,T}^{*,h} (u,x) \nu(du) dx\\
&=& \cfrac{1}{T} c_{\nu}^2 \Big( \int_{-\infty}^{0} dx
\exp\big((\lambda_i+\lambda_j)(2x-h)\big) \times \big(1-\exp(-2\lambda_i T)\big) \times \big(1-\exp(-2\lambda_j T)\big) \\
 &+& \int_0^T dx \exp\big((\lambda_i+\lambda_j)(2x-h)\big) \times \big(\exp(-2\lambda_i x) - \exp(-2\lambda_i T)\big) \times \big(\exp(-2\lambda_j x) - \exp(-2\lambda_j T)\big) \Big)  \\
& = & c_{\nu}^2 \exp(-(\lambda_i+\lambda_j)h) +O \left(\cfrac{1}{T} \right) .\qquad \text{as } T\rightarrow \infty ,
\end{eqnarray*}
We notice that
$$H_{\lambda,T}^{h} (u,x;u',x') = H_{\lambda,T} (u,x;u',x'-h)  $$
Then, as shown in the proof of Theorem \ref{CLT_ex2_thm}, we have
$$
2T \int_{\R\times \R} H_{\lambda,T}^{h} (u,x) H_{\lambda,T}^{h} (u,x) \nu(du) dx =  \cfrac{4}{\lambda_i + \lambda_j} +O \left(\cfrac{1}{T} \right) .\qquad \text{as } T\rightarrow \infty .
$$
Just as the precedent example, we may verify that for $\lambda=\lambda_1,\ldots,
\lambda_d $ and $h\geq 0$, the following asymptotic relations holds as $T\rightarrow \infty$:\\
$(a)\qquad \|\sqrt{T} H_{\lambda,T}^{*,h}\|^3_{L^3(d\nu dx)}  \sim \cfrac{1}{\sqrt{T}}\,;$\\
$(b)\qquad \|\sqrt{T} H_{\lambda,T}^{h}\|^2_{L^4((d\nu dx)^2)} \sim \cfrac{1}{\sqrt{T}}\,;$\\
$(c)\qquad \|(\sqrt{T} H_{\lambda,T}^{h}) \star_2^1 (\sqrt{T} H_{\lambda,T}^{h}\|_{L^2(d\nu dx)} = \|(\sqrt{T} H_{\lambda,T}) \star_1^0 (\sqrt{T} H_{\lambda,T}^{h})  \|_{L^2((d\nu dx)^3)} \sim \cfrac{1}{\sqrt{T}}\,;$\\
$(d)\qquad \|(\sqrt{T} H_{\lambda,T}^{h}) \star_1^1 (\sqrt{T} H_{\lambda,T}^{h})  \|_{L^2((d\nu dx)^2)}
   \sim \cfrac{1}{\sqrt{T}}\,;$\\
$(e)\qquad \|(\sqrt{T} H^{*,h}_{\lambda,T}) \star_1^1 (\sqrt{T}^h H_{\lambda,T})\|_{L^2(d\nu dx)} \sim \cfrac{1}{\sqrt{T}}\,.$  \\

We conclude the proof by analogous arguments as in the proof of (\ref{CLT_ex1}).
\end{proof}\\

The calculations above enable us to derive immediately the following new one-dimensional result, which is a direct generalization of Theorem 5.1 in \cite{pstu}.

\begin{corollary}
For every $\lambda >0 $, as $T\rightarrow \infty$,
$$Q_h(T,\lambda) \overset{(law)}{\longrightarrow}  \sqrt{\cfrac{2}{\lambda} + c_{\nu}^2 \exp(-2\lambda h) }  \times X  $$
where $X \sim \N(0,1)$ is a standard Gaussian random variable. Moreover, there exists a constant $0<\gamma(h,\lambda)<\infty$, independent of $T$ and such that
$$ d_w \left(\cfrac{Q_h(T,\lambda)}{\sqrt{2/\lambda + c_{\nu}^2 \exp(-2\lambda h) } } \, , X \right)
\leq \cfrac{\gamma(h,\lambda)}{\sqrt{T}} $$
\end{corollary}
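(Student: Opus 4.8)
The plan is to treat this one-dimensional statement as a direct specialization of the single-plus-double integral analysis already carried out in the proof of the preceding theorem, but invoking the sharper \emph{one-dimensional} Malliavin--Stein bound in the Wasserstein distance (the $d=1$ estimate of \cite[Theorem 4.2]{pstu}, which is the Wasserstein analogue of Line~3 of Table~1) rather than the multivariate $d_3$ bound. First I would recall from the computation of $\bar{Q}_h(T)$ that, for a single parameter $\lambda$,
$$ Q_h(T,\lambda) = I_1\big(\sqrt{T}\,H^{*,h}_{\lambda,T}\big) + I_2\big(\sqrt{T}\,H^{h}_{\lambda,T}\big), $$
so that $Q_h(T,\lambda)$ is the sum of a first- and a second-order integral living in orthogonal chaoses. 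Writing $\sigma^2 = 2/\lambda + c_{\nu}^2 \exp(-2\lambda h)$ and $F = Q_h(T,\lambda)/\sigma$, the goal becomes to show $d_w(F,X)\leq \gamma(h,\lambda)/\sqrt{T}$ for $X\sim\N(0,1)$.

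Next I would apply the one-dimensional bound, which controls $d_w(F,X)$ by
$$ \sqrt{\E\big[(1-\langle DF,-DL^{-1}F\rangle_{L^2(d\nu dx)})^2\big]} + \int_{\R\times\R} \E\big[|D_z F|^2\,|D_z L^{-1}F|\big]\,\nu(du)dx. $$
For the first term I would use the identity $\E[\langle DF,-DL^{-1}F\rangle]=\E[F^2]$, so that the mean contribution equals $(1-\E[F^2])^2$; the variance computations already produced in the proof of Example~3 (the single-integral part converging to $c_{\nu}^2\exp(-2\lambda h)$ and the double-integral part to $2/\lambda$, each with an $O(1/T)$ correction) give $\E[F^2]=1+O(1/T)$, whence this contribution is $O(1/T^2)$. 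Expanding $\langle DF,-DL^{-1}F\rangle$ over the first- and second-chaos components then produces inner products and star-contractions of $H^{*,h}_{\lambda,T}$ and $H^{h}_{\lambda,T}$ of exactly the types handled in Proposition~\ref{example12} (specialized to $m=n=1$), that is, the one-dimensional analogues of $S_1,\ldots,S_6$; likewise the second integral term is the analogue of $S_4$. Each of these is a finite linear combination of the contraction norms (a)--(e) established in the proof of Example~3, and since every such norm is $\sim 1/\sqrt{T}$, both terms of the bound are $O(1/\sqrt{T})$, which after collecting constants yields $\gamma(h,\lambda)$.

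I do not expect a genuine obstacle, since every analytic estimate required has already been produced; the delicate points are purely organizational. One must keep the \emph{mean} of $\langle DF,-DL^{-1}F\rangle$ (which controls the variance discrepancy, of order $1/T$) separate from its \emph{fluctuation} (which controls the contraction terms, of order $1/\sqrt{T}$), so that the dominant rate is correctly identified as $1/\sqrt{T}$. One must also take care that $F$ is the \emph{sum} $I_1+I_2$ rather than the vector $(I_1,I_2)$, so the relevant cross term $\|H^{*,h}_{\lambda,T}\star_1^1 H^{h}_{\lambda,T}\|$ enters through relation (e). Finally, I would note that in dimension one the Wasserstein distance $d_w$ is directly available---unlike the $d\geq 2$ results of this paper, which require the smoother test-function class underlying $d_3$---so that the conclusion can be phrased, exactly as in \cite[Theorem 5.1]{pstu}, in terms of $d_w$.
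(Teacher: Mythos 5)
Your proposal is correct and follows essentially the same route as the paper, which simply asserts that the corollary follows ``immediately'' from the calculations in Example~3: specialize to a single $\lambda$, use the decomposition $Q_h(T,\lambda)=I_1(\sqrt{T}H^{*,h}_{\lambda,T})+I_2(\sqrt{T}H^{h}_{\lambda,T})$, and feed the variance asymptotics together with the contraction estimates (a)--(e) into the one-dimensional Wasserstein bound of \cite[Theorem 3.1/4.2]{pstu} (Line~3 of Table~1). Your explicit separation of the mean of $\langle DF,-DL^{-1}F\rangle_{L^2(\mu)}$ (contributing $O(1/T^2)$ before the square root) from its fluctuation (contributing $O(1/T)$, hence the dominant $1/\sqrt{T}$ rate) is exactly the bookkeeping the paper leaves implicit.
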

\end{subsection}

\end{section}

\bibliographystyle{plain}

\begin{section}{Appendix: Malliavin operators on the Poisson space}\label{APPENDIX}
We now define some Malliavin-type operators associated with a Poisson measure $\hat{N}$, on the Borel space $(Z,\mathcal{Z})$, with non-atomic control measure $\mu$. We follow the work by Nualart and Vives \cite{nuaviv}, which is in turn based on the classic definition of Malliavin operators on the Gaussian space (see e.g. \cite{Malliavin, nualart}).\\


\noindent \textbf{(I) The derivative operator $D$}. \\
For every $F\in L^2(\p)$, the derivative of $F$, $DF$ is defined as an element of $L^2(\p;L^2(\mu))$, that is, of the space of the jointly measurable random functions $u:\Omega \times Z \mapsto \R$ such that $\E \left[\int_Z u_z^2 \mu(dz) \right] <\infty$.
\begin{definition}
 \begin{enumerate}
   \item The domain of the derivative operator $D$, written  ${\rm dom} D$, is the set of all random variables $F\in L^2(\p)$ admitting a chaotic decomposition (\ref{chao}) such that
$$ \sum_{k\geq 1} k k!\|f_k \|^2_{L^2(\mu^k)} < \infty ,$$
   \item For any $F\in {\rm dom}D$, the random function $z \mapsto D_z F$ is defined by
$$ D_z F= \sum_{k \geq 1}^{\infty} k I_{k-1}(f_k(z,\cdot)) .$$
 \end{enumerate}
\end{definition}

\noindent \textbf{(II) The divergence operator $\delta$}.  \\
Thanks to the chaotic representation property of $\hat{N}$, every random function
$u \in L^2(\p,L^2(\mu))$ admits a unique representation of the type
\begin{equation} \label{skor}
 u_z = \sum_{k \geq 0}^{\infty}  I_{k}(f_k(z,\cdot)) ,\,\, z\in Z,
\end{equation}
where the kernel $f_k$ is a function of $k+1$ variables, and $f_k(z,\cdot)$ is an element of $L^2_s(\mu^k)$. The {\sl divergence operator} $\delta(u)$ maps a random function $u$ in its domain to an element of $L^2(\p)$.\\

\begin{definition}
\begin{enumerate}
  \item  The domain of the divergence operator, denoted by  ${\rm dom} \delta$, is the collection of all $u\in L^2(\p,L^2(\mu))$ having the above chaotic expansion (\ref{skor}) satisfied the condition:
$$ \sum_{k\geq 0}  (k+1)! \|f_k \|^2_{L^2(\mu^(k+1))} < \infty. $$
  \item For $u\in {\rm dom}\delta$, the random variable $\delta(u)$ is given by
      $$ \delta (u) = \sum_{k\geq 0} I_{k+1}(\tilde{f}_k), $$
      where $\tilde{f}_k$ is the canonical symmetrization of the $k+1$ variables function $f_k$.
\end{enumerate}
\end{definition}
As made clear in the following statement, the operator $\delta$ is indeed the adjoint operator of $D$.
\begin{lemma}[Integration by parts]\label{L : IBP}
 For every $G\in {\rm dom} D$ and $u\in {\rm dom} \delta$, one has that
$$ \E[G \delta(u)] = \E[\langle D G, u \rangle_{L^2(\mu)}]. $$
\end{lemma}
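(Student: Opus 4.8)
The plan is to expand both sides of the identity using the chaotic representations provided in the Appendix and then match them term by term via the isometric property of multiple integrals (Proposition \ref{P : MWIone}). Write the chaotic decomposition of $G\in{\rm dom}\,D$ as $G=\E[G]+\sum_{n\geq 1}I_n(g_n)$ with $g_n\in L^2_s(\mu^n)$, so that by definition $D_zG=\sum_{n\geq 1}nI_{n-1}(g_n(z,\cdot))$, and write the expansion of $u\in{\rm dom}\,\delta$ as $u_z=\sum_{k\geq 0}I_k(f_k(z,\cdot))$ with each $f_k(z,\cdot)\in L^2_s(\mu^k)$, so that $\delta(u)=\sum_{k\geq 0}I_{k+1}(\widetilde{f}_k)$.

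First I would compute the left-hand side. Since $\delta(u)$ is a sum of multiple integrals of order at least $1$, the constant $\E[G]$ contributes nothing, and the isometric property kills all cross terms except those of matching order $n=k+1$, giving
\begin{equation}
\E[G\,\delta(u)]=\sum_{k\geq 0}(k+1)!\,\langle g_{k+1},\widetilde{f}_k\rangle_{L^2(\mu^{k+1})}.
\end{equation}
Here I would invoke the elementary symmetrization identity $\langle g_{k+1},\widetilde{f}_k\rangle_{L^2(\mu^{k+1})}=\langle g_{k+1},f_k\rangle_{L^2(\mu^{k+1})}$, valid because $g_{k+1}$ is symmetric in all $k+1$ of its arguments (averaging $f_k$ over permutations and transferring the permutation onto the symmetric factor $g_{k+1}$ leaves the inner product unchanged).

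Next I would compute the right-hand side. Bringing the expectation inside the $\mu$-integral and applying the isometric property to the product $D_zG\cdot u_z$ for fixed $z$, only the terms with $n-1=k$ survive, and using $(k+1)\cdot k!=(k+1)!$ together with Fubini's theorem one gets
\begin{equation}
\E[\langle DG,u\rangle_{L^2(\mu)}]=\sum_{k\geq 0}(k+1)!\int_Z\langle g_{k+1}(z,\cdot),f_k(z,\cdot)\rangle_{L^2(\mu^k)}\,\mu(dz)=\sum_{k\geq 0}(k+1)!\,\langle g_{k+1},f_k\rangle_{L^2(\mu^{k+1})},
\end{equation}
which coincides with the expression obtained for $\E[G\,\delta(u)]$, completing the argument.

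The routine parts are the two isometry computations and the symmetrization identity; the genuine technical point I expect to be the main obstacle is the justification of the repeated interchange of $\E$, the series, and the integral $\int_Z\,\mu(dz)$. This is precisely where the summability conditions defining ${\rm dom}\,D$ (namely $\sum_n n\,n!\|g_n\|_{L^2(\mu^n)}^2<\infty$) and ${\rm dom}\,\delta$ (namely $\sum_k (k+1)!\|f_k\|_{L^2(\mu^{k+1})}^2<\infty$) enter: by Cauchy--Schwarz these ensure absolute convergence of $\sum_k (k+1)!\,|\langle g_{k+1},f_k\rangle|$, which legitimizes Fubini and the rearrangement of the double series. I would therefore state the convergence bound explicitly at the start so that each interchange above is covered, and then let the term-by-term matching conclude the proof.
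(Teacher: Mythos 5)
Your argument is correct and is essentially the canonical proof: the paper itself does not prove Lemma \ref{L : IBP} but defers to Nualart and Vives \cite{nuaviv}, where exactly this chaos-expansion computation (isometry on both sides, the symmetrization identity $\langle g_{k+1},\widetilde{f}_k\rangle=\langle g_{k+1},f_k\rangle$, and the Cauchy--Schwarz summability bound coming from the definitions of ${\rm dom}\,D$ and ${\rm dom}\,\delta$) is carried out. Nothing further is needed.
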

The proof of Lemma \ref{L : IBP} is detailed e.g. in \cite{nuaviv}.\\

\noindent \textbf{(III) The Ornstein-Uhlenbeck generator $L$}.
\begin{definition}
\begin{enumerate}
  \item  The domain of the Ornstein-Uhlenbeck generator, denoted by  ${\rm dom} L$, is the collection of all $F \in L^2(\p)$ whose chaotic representation \label{chao} verifies the condition:
$$ \sum_{k\geq 1}  k^2 k! \|f_k \|^2_{L^2(\mu^k)} < \infty $$
  \item The Ornstein-Uhlenbeck generator $L$ acts on random variable $F\in {\rm dom}L$ as follows:
      $$ LF = - \sum_{k\geq 1} k I_{k}(f_k) .$$
\end{enumerate}
\end{definition}

\medskip

\noindent \textbf{(IV) The pseudo-inverse of $L$}.
\begin{definition}
\begin{enumerate}
  \item  The domain of the pseudo-inverse of the Ornstein-Uhlenbeck generator, denoted by $L^{-1}$, is the space $L^2_0(\p)$ of \emph{centered} random variables in $L^2(\p)$.
  \item For $F = \sum\limits_{k\geq 1} I_k (f_k) \in L^2_0(\p)$ , we set
      $$ L^{-1}F = - \sum_{k\geq 1} \cfrac{1}{k} I_{k}(f_k). $$
\end{enumerate}
\end{definition}
\end{section}
\end{document}